\newtheorem{theorem}{Theorem}[section]
\newtheorem{lemma}[theorem]{Lemma}
\newtheorem{proposition}[theorem]{Proposition}
\newtheorem{definition}[theorem]{Definition}
\newtheorem{remark}[theorem]{Remark}
\newcommand{\R}{\mathbb{R}}
\newcommand{\lb}{\langle}
\newcommand{\rb}{\rangle}
\newcommand{\I}{\operatorname{I}}
\newcommand{\II}{\operatorname{II}}
\newcommand{\III}{\operatorname{III}}
\renewcommand{\d}{\operatorname{d}\!}
\renewcommand{\Re}{\operatorname{Re}}
\renewcommand{\Im}{\operatorname{Im}}
\renewcommand{\hat}{\,\widehat}
\newcommand{\wt}{\widetilde}
\newcommand{\sgn}{\operatorname{sgn}}
\def\eqnn{\begin{eqnarray*}}
\def\eeqnn{\end{eqnarray*}}
\def\eqn{\begin{eqnarray}}
\def\eeqn{\end{eqnarray}}
\newcommand{\nc}{\newcommand}
\nc{\be}{\begin{equation}}
\nc{\ee}{\end{equation}}
\nc{\ba}{\begin{eqnarray}}
\nc{\ea}{\end{eqnarray}}
\nc{\eps}{\epsilon}
\def\prf{\begin{proof}}
\def\endprf{\end{proof}}
\begin{document}

\title[KGS on $\R^+$]{Low-regularity global well-posedness for the Klein-Gordon-Schr\"odinger system on $\R^+$}

\author{E. Compaan}
\address{Department of Mathematics, Massachusetts Institute of Technology} 
\email{compaan@mit.edu}

\author{N. Tzirakis}
\address{Department of Mathematics, University of Illinois at Urbana-Champaign}
\email{tzirakis@illinois.edu}

\thanks{The first author was supported by a National Physical Science Consortium fellowship and by NSF MSPRF \#1704865. The second author's work was supported by a grant from the Simons Foundation (\#355523 Nikolaos Tzirakis) and by Illinois Research Board, RB18051.}
\subjclass[2010]{35Q55}

\date{}

\begin{abstract} In this paper we establish an almost optimal well--posedness and regularity theory for the Klein-Gordon-Schr\"odinger system on the half line. In particular we prove local-in-time well--posedness for rough initial data in Sobolev spaces of negative indices. Our results are consistent with the sharp well--posedness results that exist in the full line case and in this sense appear to be sharp. Finally we prove a global well--posedness result by combining the $L^2$ conservation law of the Schr\"odinger part with a careful iteration of the rough wave part in lower order Sobolev norms in the spirit of the work in \cite{CHT}.
\end{abstract}

\maketitle

\section{Introduction}
We consider the Klein-Gordon-Schr\"odinger (KGS) system \eqref{eq:KGS} where the Schr\"odinger part $u$ is a complex-valued function and the Klein-Gordon part $n$ is a real valued function. This classical model describes the interaction of a nucleon field with a neutral meson field. We are especially interested in describing the local and global-in-time dynamics of the problem on the half line. The boundary value problem that arises is significantly harder to analyze than its real line counterpart and as far as we know the sharp local and/or global well--posedness theory is unknown. More precisely we consider the following system of dispersive partial differential equations (PDE)
\begin{equation} \label{eq:KGS}
 \begin{cases}
  iu_t + \Delta u = -nu, \quad (x,t) \in \R^+ \times \R^+, \\
  n_{tt} + (1 - \Delta)n = |u|^2, \\
  u(x,0) = u_0(x) \in H^{s_0}(\R^+) \\
  n(x,0) = n_0(x) \in H^{s_1}(\R^+), \quad n_t(x,0) = n_1(x) \in {H}^{s_1 - 1}(\R^+), \\
  u(0,t) = g(t) \in H^{\frac{2s_0 + 1}4}(\R^+), \quad n(0,t) = h(t) \in H^{s_1}(\R^+),
 \end{cases}
\end{equation}
with the additional compatibility conditions $u_{0}(0)=g(0)$ when $s_0>\frac12$, $h(0)=n_0(0) $ when $s_1>\frac12$. The compatibility conditions are necessary since the solutions we are interested in are    continuous space-time functions for $s>\frac12$. 

Because of the prescribed boundary data, there are no conserved quantities for the system in its full generality. On the other hand as we show in Section \ref{globalsec}, the $L^2$ norm of the Schr\"odinger part is conserved, whenever the Schr\"odinger boundary data $g$ is identically zero. In this manuscript we use no other conservation law and thus our results are valid if one replaces $nu$ and $|u|^2$ by $-nu$ and/or $-|u|^2$ respectively. 

The KGS on $\Bbb R^n$ is extensively studied and sharp results have been obtained by many authors, see \cite{GTV}, \cite{CHT}, \cite{Pecher2}, and \cite{Pecher} and the references therein. The existence and uniqueness of local-in-time solutions is proved either by the use of Strichartz estimates for Sch\"odinger and Klein-Gordon equations or with the implementation of the restricted norm method of Bourgain (\cite{bou} and \cite{bou1}). Because the nonlinearities in the system are bilinear, the restricted norm method usually provides sharp results at least for local solutions. The lowest regularity level for well--defined strong solutions can be found in \cite{Pecher} at least for the low dimensional cases. In particular in 2d, the author proves that the KGS is locally well--posed for 
$u_0\in H^{s_0},\ n_{0}\in H^{s_1},\ n_1 \in H^{s_1-1}$ where $s_0>-\frac{1}{4}$, $s_1 \geq -\frac12$ and $s_1-2s_0<\frac32$, $s_1-2<s_0<s_1+1$. Moreover he shows that the problem is ill-posed if either $s_0<-\frac14$ or $s_1<-\frac12$. The same numerology holds in the one dimensional case as one can easily check by adapting the estimates in \cite{Pecher} to 1d. In this paper we also obtain the local theory up to the endpoint $(s_0,s_1)=(-\frac14,-\frac12)$ and in this sense our result is optimal. Before we proceed any further we should define the notion of the solution in the presence of a boundary.

For initial and boundary value problems the local well--posedness is given by the following definition. The Fourier restriction spaces $X^{s_0,b}$ and $Y^{s_1,b}$ will be defined below.
\begin{definition} 
We say \eqref{eq:KGS} is locally wellposed in $H^{s_0}(\R^+)\times H^{s_1}(\R^+)$, if for any 
$u_0\in H^{s_0}(\R^+)$,    
$n_0\in H^{s_1}({  \R}^+)$, 
$n_1\in H^{s_1-1}({ \R}^+)$,
$g\in H^{\frac{2s_0+1}{4}}(\R^+)$, and $h\in H^{s_1}(\R^+)$,
  with the additional compatibility conditions mentioned above, the integral equation \eqref{eq:duhamel} below has a unique solution in
\be\label{def:lwpspace}
\big[X^{s_0,b}  \cap C^0_tH^{s_0}_x  \cap C^0_xH^{\frac{2s_0+1}{4}}_t \big] \times \big[Y^{s_1,b} \cap C^0_tH^{s_1}_x  \cap C^0_xH^{s_1}_t \big],
\ee
on a time interval $[0,T]$, for $b<\frac12$ and $T$ sufficiently small depending only on the norms of the initial and boundary data.     
 Furthermore,   the solution depends continuously on the initial and boundary data. If the time $T$ can be taken arbitrarily large we say that the system is globally well--posed.
\end{definition}

In this paper we establish the regularity properties of the KGS on a half line  using the tools that are available in the case of the real line where the PDE is fully dispersive. To prove our main theorems we rely on the Duhamel formulation of the nonlinear system adapted to the boundary conditions, see \eqref{eq:duhamel} below,  which expresses the nonlinear solution as the superposition of the linear evolutions, which incorporate the boundary and the initial data, with the nonlinearity.  To this end we extend the data into the whole line and use the Laplace and Fourier transform methods to   set up an equivalent integral equation (on $\Bbb R\times \Bbb R$) of the solution.  We analyze the integral equation  using  the restricted norm method and multilinear $L^2$ convolution estimates. At the end, via standard arguments, we prove existence of local-in-time solutions by Banach's fixed point argument. 

More precisely, in what follows we first establish a sharp local-in-time well--posedness theory and in addition we prove that the nonlinear part of the system is in a much smoother space than the corresponding linear part. Propagation of regularity is then used to establish uniqueness of strong solutions. We should mention here, that the uniqueness of solutions for initial and boundary value problems that are constructed through extensions, is not an easy byproduct of the iteration process. The reason being that we do not know if the fixed point solutions of the Duhamel operators have restrictions on the half line which are independent of the extensions of the data. In the absence of conservation laws or at least global a priori bounds this can become a hard problem to overcome, see \cite{ck}. In this paper, following the method we established in \cite{ct}, we show how one can use the local smoothing estimates to answer the uniqueness question. We thus obtain the following theorem:

\begin{theorem}
 For $s_0 \in (-\frac14, \frac12)$ and $s_1 \in (-\frac12, \frac12)$, the KGS system \eqref{eq:KGS} is locally well-posed on $\R^+$. Moreover, the nonlinear part of the solution exhibits additional smoothness:
 \begin{align*}
 u - W_0^t(u_0, g) &\in C^0_t H^{s_0 + a_0}(\R^+ \times [0,T]), \\
 n - V_0^t(n_0,n_1,h) &\in C^0_t H^{s_1 + a_1} (\R^+ \times [0,T]), 
 \end{align*}
 for any $a_0 < \min\{ \frac12,s_1 + \frac12\}$ and $a_1 < 2s_0 - s_1 + \frac12$. 
\end{theorem}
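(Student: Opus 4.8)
The plan is to set up the problem via the standard boundary-forcing / extension method (as in the Colliander–Kenig and Bona–Sun–Zhang framework, adapted to the restricted-norm setting), reduce \eqref{eq:KGS} on $\R^+$ to an integral equation posed on all of $\R\times\R$, and close a contraction in the space \eqref{def:lwpspace}. First I would extend the initial data $u_0\in H^{s_0}(\R^+)$, $n_0\in H^{s_1}(\R^+)$, $n_1\in H^{s_1-1}(\R^+)$ to functions on $\R$ with comparable norms, write the Duhamel formula \eqref{eq:duhamel} in which the linear group evolutions $W_0^t(u_0,g)$ and $V_0^t(n_0,n_1,h)$ are the solutions of the corresponding linear initial-boundary value problems (Schr\"odinger with boundary data $g$, Klein–Gordon with boundary data $h$), constructed through the Laplace transform in $t$ as boundary operators. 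The nonlinear terms $-nu$ and $|u|^2$ are then each handled by a Duhamel term consisting of (i) the full-line Duhamel integral applied to a time-localized extension $\psi(t/T)F$ of the nonlinearity $F$, plus (ii) a boundary-operator correction that removes the trace at $x=0$ of (i), so that the restriction to $\R^+$ solves the correct IBVP.

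Next I would record the needed linear estimates. For the group terms $W_0^t$ and $V_0^t$ one needs the three bounds built into \eqref{def:lwpspace}: the $X^{s_0,b}$ (resp. $Y^{s_1,b}$) bound, the $C^0_tH^s_x$ energy bound, and the Kato-type local smoothing bound $C^0_xH^{(2s_0+1)/4}_t$ (resp. $C^0_xH^{s_1}_t$); these are standard for the linear Schr\"odinger and Klein–Gordon boundary operators and I would quote/prove them as lemmas preceding the theorem. For the Duhamel (nonhomogeneous) terms one needs the analogous four estimates with a gain of $T^\theta$ for some $\theta>0$ coming from the time-localization $\psi(t/T)$ together with $b<\frac12$; this is where the restriction $b<\frac12$ in the definition is used. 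The genuinely multilinear input is the pair of bilinear $L^2$ convolution estimates
\[
\|nu\|_{X^{s_0,b-1}}\lesssim \|n\|_{Y^{s_1,b}}\,\|u\|_{X^{s_0,b}},\qquad
\||u|^2\|_{Y^{s_1,b-1}}\lesssim \|u\|_{X^{s_0,b}}^2,
\]
valid on the stated range $s_0\in(-\tfrac14,\tfrac12)$, $s_1\in(-\tfrac12,\tfrac12)$ — these are exactly the 1d analogues of Pecher's estimates referred to in the introduction, and their proof is the usual dyadic decomposition plus elementary $\sup$-in-one-variable calculus on the Schr\"odinger and Klein–Gordon characteristic weights. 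Granting all of this, the map $(u,n)\mapsto(\text{Duhamel of }-nu,\ \text{Duhamel of }|u|^2)$ is a contraction on a ball of the space \eqref{def:lwpspace} for $T$ small depending only on the data norms, giving local well-posedness and continuous dependence; uniqueness of the restriction to $\R^+$ (independent of the chosen extensions) is obtained, following \cite{ct}, from the local smoothing bounds, which force the nonlinear part's trace to be determined intrinsically.

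For the smoothing statement, the point is to rerun the bilinear estimates \emph{with a derivative gain inserted}. That is, I would prove the improved bilinear bounds
\[
\|nu\|_{X^{s_0+a_0,\,b-1+}}\lesssim \|n\|_{Y^{s_1,b}}\,\|u\|_{X^{s_0,b}},\qquad
\||u|^2\|_{Y^{s_1+a_1,\,b-1+}}\lesssim \|u\|_{X^{s_0,b}}^2,
\]
for $a_0<\min\{\tfrac12,\,s_1+\tfrac12\}$ and $a_1<2s_0-s_1+\tfrac12$. The gain comes from the resonance structure: in the Schr\"odinger–wave interaction the output modulation $\langle\tau+\xi^2\rangle$ and the input wave modulation $\langle\tau'\pm\sqrt{1+\xi'^2}\rangle$ together dominate a positive power of the output frequency except on a measure-zero resonant set, so one may trade up to (almost) $\tfrac12$ derivative onto the Schr\"odinger output and up to (almost) $2s_0-s_1+\tfrac12$ onto the wave output; the constraints $a_0\le s_1+\tfrac12$ and the $a_1$ constraint are precisely the thresholds at which the low-frequency behavior of the other factor can no longer absorb the gain. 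Since the already-established Duhamel estimates (the $C^0_tH^s_x$ one in particular) are insensitive to raising $b-1$ to $b-1+$ as long as the exponent stays below $-\tfrac12$ — which it does after shrinking — the nonlinear Duhamel term lands in $C^0_tH^{s_0+a_0}_x$ (resp. $C^0_tH^{s_1+a_1}_x$), which is the claim; one also checks the boundary-correction pieces are at least this smooth, using that the traces being corrected are themselves smoother by local smoothing.

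The main obstacle I expect is not the algebra of the bilinear estimates but the interface between the restricted-norm machinery and the boundary operators: one must show that the Laplace-transform boundary operators that implement $g$, $h$, and the nonlinear trace corrections actually map into all three norms appearing in \eqref{def:lwpspace} — in particular that they are bounded on $X^{s_0,b}$ with $b<\tfrac12$ and that they gain the extra $a_0$, $a_1$ smoothness needed for the second half of the theorem — since the trace of the full-line Duhamel term is only as regular as local smoothing gives, and one needs that regularity to exceed $s_0+a_0$ (resp. $s_1+a_1$) for the correction not to destroy the gain. Verifying this compatibility, and checking the low-regularity ranges $s_0>-\tfrac14$, $s_1>-\tfrac12$ are exactly where the boundary operators remain bounded, is the delicate part; the rest is a standard fixed-point argument.
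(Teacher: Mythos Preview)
Your proposal is essentially the paper's approach: extend the data, write the Duhamel formula \eqref{eq:duhamel} with boundary-operator corrections, establish the linear estimates for the free flows and boundary operators, prove the bilinear estimates, close a contraction in $X^{s_0,b}\times Y^{s_1,b}$ with $b<\tfrac12$, and then extract the smoothing from the slack in the bilinear bounds (the paper's Propositions~\ref{KG_bilinear} and~\ref{KG_bilinearI}); you also correctly flag the boundary-operator compatibility as the delicate point and handle uniqueness via smoothing as in \cite{ct}. One small correction: since you work with $b<\tfrac12$, the bilinear estimates must place the nonlinearity in $X^{s_0,-b}$ and $Y^{s_1,-b}$ rather than $X^{s_0,b-1}$ and $Y^{s_1,b-1}$, because the inhomogeneous estimate \eqref{eq:xs2} requires the forcing exponent to exceed $-\tfrac12$; this is exactly how the paper states its bilinear propositions.
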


\begin{remark}
Our result matches the sharp (up to endpoints) result for the KGS system on $\R$, which is locally well-posed in $H^{-\frac14+} \times H^{-\frac12}$. Local theory at this regularity on $\R^2$ and $\R^3$ can be found in \cite{Pecher}, \cite{Pecher2} -- these results imply the corresponding well-posedness on $\R$. Counterexamples demonstrating sharpness in two and three dimensions can be found in \cite{bhht}, \cite{Pecher2} -- the same counterexamples serve in the one-dimensional case.  
\end{remark}

\begin{remark}
 We establish a well--posedness result for regularities $s_0, s_1 <\frac12$. Our methods also give results at higher regularities. However, obtaining such results introduces additional complications in our estimates of the boundary operator. We therefore elect to focus on the low-regularity case in this work. 
\end{remark}

Based on the local theory and the argument in \cite{CHT}, we then prove a new global well--posedness result: 
\begin{theorem}
 For $s_0=0$ and $s_1 \in (-\frac12, \frac12)$, the KGS system \eqref{eq:KGS} with boundary data $h=0$, is globally well--posed on $\R^+$ and the solution satisfies 
 $$\|u(t)\|_{L^2(\Bbb R^{+})}=\|u_0\|_{L^2(\Bbb R^{+})}$$
 and
 $$\|n(t)\|_{H^{s_1}(\Bbb R^{+})}+\|\partial n(t)\|_{H^{s_{1}-1}(\Bbb R^{+})}\leq C \exp \Bigl(c|t|\left( \|u_0\|_{L^2(\Bbb R^+)}+\|h\|_{H^{s_1}(\Bbb R^+)}\right)\Bigr)$$
 where the constants depend on the norms $\|u_{0}\|_{L^2}$, $\|n_{0}\|_{H^{s_1}}$, $\|n_1\|_{H^{s_1-1}}$ and $\|h\|_{H^{s_1}}$.
\end{theorem}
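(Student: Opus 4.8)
The plan is to combine the local well-posedness theorem above with two a priori facts, in the spirit of \cite{CHT}: the exact conservation of $\|u(t)\|_{L^2(\R^+)}$, valid because the Schr\"odinger boundary datum vanishes, and an a priori bound on the low-regularity norm of the Klein--Gordon field that is driven by that conservation law together with the fact that the source term $|u|^2$ is controlled by $\|u\|_{L^2}^2$ as long as $s_1<\tfrac12$. For the conservation law (this is carried out in Section~\ref{globalsec}): for sufficiently regular solutions the energy identity for the Schr\"odinger equation, after an integration by parts over $\R^+$, reads $\tfrac{d}{dt}\|u(t)\|_{L^2(\R^+)}^2 = 2\,\Im\big[\bar u(0,t)\,\partial_x u(0,t)\big]$, the potential contribution dropping out because $n$ is real; the right side vanishes since $u(0,t)=g(t)\equiv 0$, and an approximation argument through the local theory extends this to every solution in our class, yielding $\|u(t)\|_{L^2(\R^+)}=\|u_0\|_{L^2(\R^+)}$ on the maximal interval of existence.

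Next I would bound the Klein--Gordon part. Diagonalizing via $w_\pm := \partial_t n \pm i(1-\Delta)^{1/2}n$ turns $n_{tt}+(1-\Delta)n=|u|^2$ into $\partial_t w_\pm = \pm i(1-\Delta)^{1/2}w_\pm + |u|^2$, whence $w_\pm(t)=e^{\pm it(1-\Delta)^{1/2}}w_\pm(0)+\int_0^t e^{\pm i(t-s)(1-\Delta)^{1/2}}|u(s)|^2\,ds$. Since $e^{\pm it(1-\Delta)^{1/2}}$ is unitary on every $H^\sigma$ and $\|n\|_{H^{s_1}}+\|\partial_t n\|_{H^{s_1-1}}\simeq\|w_+\|_{H^{s_1-1}}+\|w_-\|_{H^{s_1-1}}$, this gives $\|n(t)\|_{H^{s_1}}+\|\partial_t n(t)\|_{H^{s_1-1}}\lesssim\|n_0\|_{H^{s_1}}+\|n_1\|_{H^{s_1-1}}+\int_0^t\||u(s)|^2\|_{H^{s_1-1}}\,ds$, and because $s_1-1<-\tfrac12$ the embedding $\||u|^2\|_{H^{s_1-1}}\lesssim\||u|^2\|_{L^1}=\|u\|_{L^2}^2$ together with the conservation law bounds the integrand by $\|u_0\|_{L^2}^2$, so the Klein--Gordon norm grows at most linearly in $t$. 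On the half line this estimate has to be run on suitable whole-line extensions of the data, so the contribution of the boundary term $h$ -- carried by the boundary operator $V_0^t$ -- must be tracked separately; here I would invoke the time-uniform boundary estimates from the local theory to absorb it into a term $\lesssim\|h\|_{H^{s_1}(\R^+)}$, and the smoothing statement (at $s_0=0$ the nonlinear part of $n$ gains $a_1<\tfrac12-s_1$, $a_1>0$, derivatives) to keep the cross terms between the boundary and nonlinear parts under control.

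Finally I would iterate the local theory on successive intervals $[t_k,t_{k+1}]$: the local existence time degrades at most polynomially in the size of the data, and by the conservation law and the a priori bound just obtained this size is $\lesssim 1+t_k+\|u_0\|_{L^2(\R^+)}+\|h\|_{H^{s_1}(\R^+)}$ up to the fixed norms of $n_0,n_1$, so the step sizes have divergent sum and the solution extends to all of $\R^+$; feeding the (at most linear) growth into this continuation gives the asserted bound, which we record in the weaker exponential form $C\exp\!\big(c|t|(\|u_0\|_{L^2(\R^+)}+\|h\|_{H^{s_1}(\R^+)})\big)$. Uniqueness on $\R^+$ and the persistence of the smoothing statements are inherited from the local theory exactly as before.

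The main obstacle is making the Klein--Gordon a priori bound rigorous on the half line: one must run the $w_\pm$-energy estimate on whole-line extensions, show that $V_0^t$ contributes at most $\|h\|_{H^{s_1}(\R^+)}$ uniformly in $t$ and does not interact badly with the rough nonlinear part, and -- the crucial point -- verify that the local existence time furnished by the restricted-norm estimates degrades slowly enough (a mild polynomial in the data size) that the iterated time steps still sum to infinity when the Klein--Gordon norm is only known to grow linearly. This last point is precisely the ``careful iteration of the rough wave part in lower order Sobolev norms'' alluded to in the introduction, and is where we follow \cite{CHT} most closely.
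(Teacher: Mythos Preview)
Your overall strategy differs from the paper's and would, if it could be completed, give a stronger conclusion. The paper never derives an a priori energy inequality for the wave part via $w_\pm$ and the embedding $L^1\hookrightarrow H^{s_1-1}$ (valid precisely because $s_1-1<-\tfrac12$). Instead it stays entirely in the $X^{s,b}\times Y^{s,b}$ framework with the specific choice $b=\tfrac13$, so that the local step is $T\approx\bigl(\|n_0\|_{H^{s_1}}+\|n_1\|_{H^{s_1-1}}\bigr)^{-2}$ and the increment of the wave norm per step is $\lesssim T^{1/2}\|u_0\|_{L^2}^2$ via the bilinear estimate $\|(|u|^2)^{\mathrm{odd}}\|_{Y^{s_1,-1/3}}\lesssim\|u\|_{X^{0,1/3}}^2$; a doubling count then shows the wave norm at most doubles over a time $\approx\|u_0\|_{L^2}^{-2}$ independent of the wave data, which is what produces the exponential bound in the statement. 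Your direct energy estimate would instead yield linear growth $\|n(t)\|_{H^{s_1}}+\|n_t(t)\|_{H^{s_1-1}}\lesssim\|n_0\|_{H^{s_1}}+\|n_1\|_{H^{s_1-1}}+t\|u_0\|_{L^2}^2+\|h\|_{H^{s_1}}$, and then your worry about the step sizes summing to infinity is actually moot: once any a priori bound of the form $A+Bt$ holds on the maximal interval $[0,T^*)$, \emph{any} polynomial dependence $T_{\mathrm{loc}}\gtrsim(\text{data size})^{-\alpha}$ already forces $T^*=\infty$ by a trivial continuation argument.

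The real gap is in transferring the whole-line energy identity to $\R^+$. What the paper does---and what you have not named---is take \emph{odd} extensions of the Klein--Gordon data and, crucially, of the forcing: one replaces $|u|^2$ by $(|u|^2)^{\mathrm{odd}}$. Because the linear Klein--Gordon flow preserves oddness, the resulting whole-line solution vanishes at $x=0$, so its restriction to $\R^+$ coincides with the half-line solution with zero boundary datum, and the nonlinear boundary correction $V_0^t(0,z)$ drops out of the analysis altogether. Your proposed alternative---control $V_0^t$ directly by ``time-uniform boundary estimates'' and invoke the smoothing of the nonlinear part---is precisely the route the paper says is blocked for $s_1<0$ (the footnote in Section~\ref{globalsec} remarks that an adequate $H^{s_1}$ estimate on $V_0^t(0,z)$ does not appear to hold below $L^2$). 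So the missing ingredient is not a quantitative rate issue but the odd-extension observation; with it, your direct energy argument goes through and in fact improves the stated exponential bound to a linear one, while without it the boundary correction obstructs the a priori estimate in the range $s_1\in(-\tfrac12,0)$.
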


In general terms, the well--posedness theory for initial and boundary value problems can be substantially advanced by considering solutions in the $X^{s,b}$ spaces with $b$ less than, but close to, $\frac12$. We remark that the boundary operators are never in $X^{s,b}$ for any $b>\frac12$, \cite{ET}. But to prove the above theorem we are forced to work with $b=\frac13$. This is a smaller norm and thus in order to iterate the local solutions we need to improve on certain multilinear estimates. Moreover, since the boundary terms produce additional corrections in Duhamel's formula, the iteration process on the half line is more complicated than on $\Bbb R$. To carry out the iteration we use a variant of an idea that appeared in \cite{CHT}. By the conservation of the $L^2$ norm of the Schr\"odinger part we know that from one local step to the other, the norm of the Schr\"odinger solution doesn't grow. To successfully then go from local to global solutions we should control the norms of the Klein-Gordon part. In this paper we present a necessary modification of  this process by showing how one can control efficiently the additional terms that arise due to the boundary operator, using the method of odd extensions. We present the details of the argument in Section \ref{globalsec}.

We now discuss briefly the organization of the paper. In the remainder of this section we standardize the notation we use throughout the paper, we define the Banach spaces of the solutions and write down explicitly the new Duhamel's formula that incorporates the boundary value problem and the extension of the half-line data. We note that the solution is constructed on $\Bbb R$ but its restriction on $\R^+$ satisfies the PDE in an appropriate sense. In Section \ref{estimates} we state the linear and nonlinear estimates that we need in order to establish the local and global well--posedness theory for our problem. In Section \ref{lwp}, assuming the estimates in Section \ref{estimates}, we prove the local theory along with the smoothing estimates that the nonlinear system satisfies. In Section \ref{uni} we show that the solutions we have constructed are unique and independent of the extension of the initial data, using the smoothing estimates of Section \ref{lwp}. Section \ref{globalsec} gives the details of the globalizing technique which can in principle be used in a variety of dispersive systems on the half line. In Section \ref{proofs} we have collected some proofs of the linear and the nonlinear estimates. This is probably the most technical part of the paper. Finally at the end of the paper in Appendix \ref{appendA} we show how one can use the Laplace transform to obtain the solution of the linear Klein-Gordon on the half-line.

\subsection{Notation \& Function Spaces}\label{notation}

We define the one-dimensional Fourier transform by
\[ \widehat{f}(\xi) = \mathcal{F}_x f(\xi) = \int_{\R} e^{-i x \xi} f(x) \d x. \]
The Sobolev space $H^s(\R)$ is defined by the norm
\[ \|f\|_{H^s(\R)} = \| \lb \xi \rb^s \hat{f} \|_{L^2}, \quad \text{ where } \quad \lb \xi \rb = \sqrt{ 1 + |\xi|^2}.\]
 The half-line Sobolev spaces for negative indices are defined as follows.
\begin{definition}
 For $s> -\frac12$, we define $H^s(\R^+)$ by
 \[ H^s(\R^+) = \{ f \in \mathcal{D}(\R^+) \; : \; f = F|_{\R^+} \text{for some } F \in H^s(\R)\} .\]
 The corresponding norm is
 \[\|f\|_{H^s(\R^+)} = \inf\{ \| F\|_{H^s(\R)} \; : \;  F|_{\R^+} = f\} .\]
For $s>-3/2$,  we define $H^{s}(\R^+)= \Bigl(H_0^{-s}(\R^+)\Bigr)^*$ with the usual dual norm.   
\end{definition}

The restriction definition of $H^s(\R^+)$ is confined to $s > -\frac12$ since at lower regularities restriction is no longer defined. 
We will also use the $X^{s,b}$ spaces (\cite{bou}, \cite{bou1}) corresponding to the Schr\"{o}dinger and wave flows. These are defined for functions on the full space $\R_x \times \R_t$ by the norms 
\begin{align*} 
\|u\|_{X^{s,b}} &= \Bigl\| \lb \xi \rb^s \lb \tau - \xi^2 \rb^b \widehat{u}(\xi,\tau) \Bigr\|_{L^2_\xi L^2_\tau}, \\
\|n\|_{Y^{s,b}_\pm} &= \Bigl\| \lb \xi \rb^s \lb \tau \mp \xi \rb^b \widehat{n}(\xi,\tau) \Bigr\|_{L^2_\xi L^2_\tau}, \\
\|n\|_{Y^{s,b}} &= \inf_{n=n_+ + n_-} \left(\|n_+\|_{Y_+^{s,b}}+ \|n_-\|_{Y_-^{s,b}} \right).
\end{align*}

The characteristic function on $[0,\infty)$ will be denoted by $\chi$. We choose $\eta \in C^\infty(\R)$ to be a smooth bump function such that $\eta = 1$ on $[-1,1]$ and $\operatorname{supp} \eta \subset [-2,2]$. We also define a scaled version of $\eta$ by $\eta_T(\cdot) = \eta(\cdot/T)$, so that $\eta_T = 1$ on $[-T,T]$. 

The notation $a \lesssim b$ indicates that $a \leq Cb$ for some absolute constant $C$. The expression $a \gtrsim b$ is defined similarly, and $a \approx b$ means that $a \lesssim b$ and $a \gtrsim b$. The notation $a+$ indicates $a + \epsilon$, where $\epsilon$ can be arbitrarily small. We define $a-$ similarly.

\subsection{Fixed-Point Equation Formulation}

We will denote the linear Schr\"odinger flow on $\R$ by 
\[ e^{i t \Delta} u_0 = \mathcal{F}^{-1}[e^{-it |\cdot|^2} \hat{u_0}(\cdot)].\]

To solve the nonlinear problem \eqref{eq:KGS}, we begin by considering the corresponding linear initial-boundary-value problems. The Schr\"odinger problem 
\begin{equation} \label{eq:SchrIBVP}
 \begin{cases}
  iu_t + \Delta u = 0, \quad x,t \in \R^+, \\
  u(x,0) = u_0(x) \in H^{s_0}(\R^+), \quad u(0,t) = g(t) \in H^{\frac{2s+1}4}(\R^+),
 \end{cases}
\end{equation}
has been studied in \cite{ET}, \cite{bonaetal}. Let $u_0^e \in H^{s_0}(\R)$ be an extension of $u_0$ such that $\|u_0^e\|_{H^{s_0}(\R)} \lesssim \| u_0\|_{H^{s_0}(\R^+)}$. Then the solution to \eqref{eq:SchrIBVP} can be written in the form $W^t_0(u_0^e, g)$, where 
\begin{equation} \label{eq:SchrIBVPformula}
 W^t_0(u_0^e, g) = e^{it\Delta} u_0^e + W^t_0(0, g-p), \quad \text{where} \quad p(t) = \eta(t) [ e^{it\Delta} u_0^e]_{x=0}.
 \end{equation}

Let $D$ be the Fourier multiplier operator defined by 
\[ \hat{Df}(\xi) = \sgn(\xi) \sqrt{1 + \xi^2} \hat{f}(\xi).\]
This somewhat unusual choice of $D$ is convenient for our calculations. 
The linear Klein-Gordon equation on $\R$, which can be written as
\[ \begin{cases} 
  n_{tt} - (iD)^2 n = 0, \quad x,t \in \R \\
  n(x,0) = n_0(x), \quad n_t(x,0) = n_1(x),
   \end{cases} \]
has solution
\[ n(x,t) = W_R^t\bigl(n_0(x),n_1(x)\bigr) = W_1^t\bigl(n_0(x)\bigr) + W_2^t\bigl(n_1(x)\bigr), \]
where $W_1^t$ and $W_2^t$ are spatial Fourier multiplier operators defined by 
\[ \mathcal{F}\Bigl(W_1^t\bigl(n_0\bigr)\Bigr)(\xi) = \Re\bigl(e^{it \sgn(\xi) \sqrt{\xi^2 + 1}}\bigr)\widehat{n_0}(\xi) \qquad \mathcal{F}\Bigl(W_2^t\bigl(n_1\bigr)\Bigr)(\xi) = \frac{\Im e^{it \sgn(\xi) \sqrt{\xi^2 + 1}}}{\sgn(\xi)\sqrt{\xi^2+1}}\widehat{n_1}(\xi). \] 

It will be important to note that this linear flow preserves oddness -- that is, if $n_0$ and $n_1$ are odd, then the solution remains odd. This can be verified by noting that the Fourier transform of an odd real-valued function is odd and purely imaginary. The Fourier multipliers above are even and real-valued, so the transform of the linear flow remains odd and purely imaginary. The inverse Fourier transform of such a function is real and odd. 

We also need the solution to the linear Klein-Gordon initial-boundary value problem with zero initial data:
\begin{equation} \label{eq:KGibvp}
 \begin{cases}
    n_{tt} - (iD)^2 n  = 0, \quad x,t \in \R^+  \\
    n(x,0) = n_t(x,0) = 0, \\
    n(0,t) = h(t). 
   \end{cases} 
\end{equation}
The following solution formula can be derived via Laplace transforms. A proof is given in Appendix \ref{appendA}. 
\begin{lemma}\label{AB}
Suppose $h$ is a Schwarz class function on $\R^+$. Then the solution $V^t_0(0, h)$ to \eqref{eq:KGibvp} can be written as $\frac1{2\pi} (A + B)$, where
\begin{align*}
 A &=  \int_{-1} ^1 e^{ i \mu t - x \sqrt{1 - \mu^2}} \rho(x\sqrt{1 - \mu^2}) \hat{h}(\mu) \d\mu  \\
 B &= \int_{-\infty} ^\infty e^{-it \mu \sqrt{1 + 1/\mu^2} + i\mu x } \hat{h}(-\mu \sqrt{1+ 1/\mu^2}) \frac{1}{\sqrt{1 + 1/\mu^2}} \d \mu. 
\end{align*}
Here we write $\hat{h}$ for $\hat{\chi h}$. 
\end{lemma}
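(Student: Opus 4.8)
The plan is to recover the formula via the Laplace transform in time: transform \eqref{eq:KGibvp} into a two-point boundary value problem for an ODE in $x$, solve it, invert the transform, and collapse the resulting Bromwich contour onto the imaginary axis.

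Concretely, set $\tilde n(x,\lambda)=\int_0^\infty e^{-\lambda t}n(x,t)\,\d t$ for $\Re\lambda>0$. Since $n(x,0)=n_t(x,0)=0$, the transform of $n_{tt}$ is $\lambda^2\tilde n$, and because $(iD)^2$ is the local operator $\Delta-1=\p_x^2-1$, the PDE becomes the ODE $\p_x^2\tilde n=(1+\lambda^2)\tilde n$ on $\R^+$. Its solutions are spanned by $e^{\pm x\sqrt{1+\lambda^2}}$, where we take the branch of $\sqrt{1+\lambda^2}$ that is analytic on $\mathbb{C}\setminus\big(i[1,\infty)\cup i(-\infty,-1]\big)$ and asymptotic to $\lambda$ as $|\lambda|\to\infty$, so that $\Re\sqrt{1+\lambda^2}>0$ along the Bromwich line $\{\Re\lambda=\gamma\}$, $\gamma>0$. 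Discarding the exponentially growing solution (so that $\tilde n(\cdot,\lambda)$ is admissible on the half line) and imposing $\tilde n(0,\lambda)=\tilde h(\lambda)$ with $\tilde h(\lambda)=\int_0^\infty e^{-\lambda t}h(t)\,\d t$ gives $\tilde n(x,\lambda)=\tilde h(\lambda)\,e^{-x\sqrt{1+\lambda^2}}$.

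Next, invert: $n(x,t)=\frac1{2\pi i}\int_{\gamma-i\infty}^{\gamma+i\infty}e^{\lambda t}\tilde h(\lambda)e^{-x\sqrt{1+\lambda^2}}\,\d\lambda$, and push the contour onto the imaginary axis. Because $h$ is Schwartz on $\R^+$, $\tilde h$ extends continuously to $\{\Re\lambda\ge 0\}$ with $\tilde h(i\mu)=\hat{\chi h}(\mu)=\hat h(\mu)$, and the square-root singularities of the integrand at $\lambda=\pm i$ are integrable; together with the rapid decay of $\hat h$ this justifies the deformation (the arcs at infinity and the small arcs around $\pm i$ contribute nothing in the limit). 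On the imaginary axis one splits at $|\mu|=1$. For $|\mu|<1$, $\sqrt{1+\lambda^2}=\sqrt{1-\mu^2}>0$, producing the evanescent term, which after inserting the cutoff $\rho$ (see below) is precisely $\frac1{2\pi}A$. For $|\mu|>1$, the boundary value of the square root from the right half-plane is $i\,\sgn(\mu)\sqrt{\mu^2-1}$, so $e^{-x\sqrt{1+\lambda^2}}=e^{-ix\,\sgn(\mu)\sqrt{\mu^2-1}}$; the change of variables sending the integration variable to $\sgn(\mu)\sqrt{\mu^2-1}$ (equivalently, $\mu\mapsto-\mu\sqrt{1+1/\mu^2}$), which brings in the weight $(1+1/\mu^2)^{-1/2}$, puts the propagating term into the form $\frac1{2\pi}B$; one verifies that the exponents, the argument of $\hat h$, the orientation, and the measure all match as written.

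The cutoff $\rho$ enters because $e^{-x\sqrt{1-\mu^2}}$ grows as $x\to-\infty$, whereas for the later $X^{s,b}$/$Y^{s,b}$ analysis one wants $A$ to be a tempered function of $x\in\R$ whose restriction to $\R^+$ is the solution. Thus take $\rho\in C^\infty(\R)$ with $\rho\equiv1$ on $[0,\infty)$ and $e^{-y}\rho(y)$ rapidly decaying on $(-\infty,0]$: then $\rho(x\sqrt{1-\mu^2})=1$ for $x\ge0$ (so the restriction to $\R^+$ is unchanged), while $e^{-x\sqrt{1-\mu^2}}\rho(x\sqrt{1-\mu^2})$ stays bounded on all of $\R$. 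I expect the main obstacle to be the contour-deformation step: choosing and tracking the branch of $\sqrt{1+\lambda^2}$ consistently through the regimes $|\mu|<1$ and $|\mu|>1$, handling the branch points $\pm i$, and carrying out the bookkeeping in the substitution that produces $B$. It is also worth confirming separately — by letting $x\to0^+$ in $\frac1{2\pi}(A+B)$, using $\rho(0)=1$ and Fourier inversion — that $n(0,t)=h(t)$ is attained, so that the stated formula genuinely solves \eqref{eq:KGibvp}.
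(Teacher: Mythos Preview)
Your proposal is correct and follows essentially the same approach as the paper: Laplace transform in time, solve the resulting ODE in $x$ by selecting the decaying exponential, invert via the Bromwich integral, deform to the imaginary axis, split at $|\mu|=1$, and change variables in the propagating piece to obtain $B$. The paper additionally uses a real-part symmetry to reduce to the upper half of the contour before sending $c\to0^+$, and states the branch cut as $[-i,i]$ rather than your $i(\pm[1,\infty))$, but since both take boundary values from the right half-plane these amount to the same computation; your discussion of the contour deformation and the role of $\rho$ is in fact more detailed than the paper's.
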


Now let $n_{0}^e$ and $n_{1}^e$ be extensions to $\R$ of $n_0$ and $n_1$ respectively such that 
\[ \| n_{0}^e\|_{H^{s_1}(\R)} \lesssim \|n_0\|_{H^{s_1}(\R^+)} \qquad \|n_{1}^e\|_{H^{s_1 - 1}(\R)} \lesssim \|n_1\|_{H^{s_1 - 1}(\R^+)}.\] 
We note that such extensions certainly exist. One possible choice of extension is the odd extension, For $f \in H^{s}(\R^+)$ with $s \in (-\frac32,\frac12)/\{-\frac12\}$, we define $f^\text{odd} \in H^s(\R)$ by its action on $\varphi \in H^{-s}(\R)$, as follows: 
\[ f^\text{odd}[\varphi] := f\Bigl[\chi \Bigl(\varphi(x) - \varphi(-x)\Bigr) \Bigr]. \]
This makes sense since for $-s \in (-\frac12, \frac32) \backslash \{ \frac12\}$  an odd $H^{-s}(\R)$ function can be restricted to obtain an $H^{-s}_0(\R^+)$ function. Furthermore, $f^\text{odd}$ agrees with $f$ on functions supported on $\R^+$ and $\| f^\text{odd}\|_{H^s(\R)} \lesssim \|f \|_{H^s(\R^+)}$. 

To reduce the Klein-Gordon evolution on $\R$ to a first-order equation in time, we define
\begin{equation} \label{eq:phiDef}
\phi_\pm (x) = n_{0}^e(x) \mp iD^{-1} n_{1}^e(x) \in H^{s_1}(\R).
\end{equation}
Then the solution to the linear Klein-Gordon equation on $\R$ with data $n_{0}^e$ and $n_{1}^e$ is 
\[ \frac12 \Bigl[ e^{itD}\phi_+ + e^{-itD} \phi_- \Bigr]. \] 
We can then express the solution $V_0^t(\phi_\pm, h)$ to the linear Klein-Gordon on the half-line with initial data $(n_0,n_1)$ and boundary data $h$ as
\begin{equation} 
V_0^t(\phi_\pm, h) = \frac12 \Bigl[ e^{itD}\phi_+ + e^{-itD} \phi_- \Bigr] + V_0^t(0,h - r)(x),
\end{equation}
where 
\begin{equation}\label{eq:rDef}
 r(t) = \frac12 \Bigl[ e^{itD}\phi_+ + e^{-itD} \phi_- \Bigr]_{x=0}.
\end{equation}
Then $V^t_0(\phi_\pm, h)$ solves the linear Klein-Gordon on $\R^+_x \times \R^+_t$.

We are now ready to express the solution of \eqref{eq:KGS} as a fixed point of an integral operator. The following formulae hold on $[0,T]$, for $0<T<1$. 
\begin{equation}\label{eq:duhamel} \left\{
\begin{array}{l}
\Gamma_1 u(t) = u(t) = \eta_T(t) W_0^t\big(u_0^e, g  \big) -i \eta_T(t) \int_0^t e^{i(t- t^\prime)\Delta}   F(u,n) \d t'  +i\eta_T(t) W_0^t\big(0,  q  \big), \\
\Gamma_2 n(t) = n(t) = \eta_T(t) V_0^t\big(\phi_{\pm}, h\big) + \frac12\eta(t) (n_+ +  n_-) -\frac12 \eta_T(t) V_0^t(0,z), 
\end{array}\right.
\end{equation}

where
\begin{align} \label{eq:def_duhamel_compnts}
\begin{split}
F(u,n)&=\eta_T(t) n u \qquad \qquad \qquad
q(t)=  \eta_T(t ) \Bigl[\int_0^t e^{i(t-t^\prime)\Delta} F(u,n)\d t' \Bigr]_{x=0} \\
n_\pm &=   \mp i \int_0^t e^{\pm i (t-t^\prime) D} G(u )\d t' \\
G(u )&=\eta_T(t) D^{-1}  |u|^2,  \qquad \qquad
z(t)=  \eta_T(t) [n_++n_-]_{x=0}
\end{split}
\end{align}
and the linear flows $W_0^t$ and $V_0^t$ are defined in \eqref{eq:SchrIBVPformula} and \eqref{eq:phiDef}--\eqref{eq:rDef} respectively.

\subsection{Fundamental Estimates}

We recall the embedding $X^{s,b}, Y^{s,b}\hookrightarrow C^0_t H^{s}_x$ which holds for $b>\frac{1}{2}$, as well as the following inequalities, from \cite{bou1},and \cite{GTV}. 
For any $s,b\in \R$ we have
\begin{equation}\label{eq:xs1}
\|\eta(t) e^{it\Delta} u_0 \|_{X^{s,b}}\lesssim \|u_0\|_{H^s}.
\end{equation}
For any $s\in \R$,  $0\leq b_1<\frac12$, and $0\leq b_2\leq 1-b_1$, we have
\begin{equation}\label{eq:xs2}
\Big\| \eta(t) \int_0^t  e^{i(t-t^\prime)\Delta}  F(t^\prime ) dt^\prime \Big\|_{X^{s,b_2} }\lesssim   \|F\|_{X^{s,-b_1} }.
\end{equation}
Moreover, for $T\leq1$, and $-\frac12<b_1<b_2<\frac12$, we have
\begin{equation}\label{eq:xs3}
\|\eta_T(t) F \|_{X^{s,b_1}}\lesssim T^{b_2-b_1} \|F\|_{X^{s,b_2}}.
\end{equation}
Analogous inequalities hold for the norms $Y^{s,b}_\pm$.

For our global theory, we recall the following inequalities (see, e.g. \cite{CHT}), which hold for $0 \leq b < \frac12$ and $T \lesssim 1$:
\begin{equation}\label{eq:T_power_linear}
\begin{split}
 \| \eta_T(t) e^{it\Delta} u_0 \|_{X^{0,b}} &\lesssim T^{\frac12 - b} \| u_0\|_{L^2} \\
 \| \eta_T(t) e^{\pm itD} \phi \|_{Y^{s_1, b}} &\lesssim T^{\frac12 - b} \Bigl(\| n_0\|_{H^{s_1}} + \|n_1\|_{H^{s_1-1}}\Bigr). 
 \end{split}
\end{equation}

Finally, we need the following lemma regarding multiplication by characteristic functions:
\begin{lemma}{\cite{ET}} \label{char_func_lemma} Assume $f \in H^s(\R^+)$. 
 If $-\frac12 < s < \frac12$, then $\| \chi f \|_{H^s(\R)} \lesssim \| f \|_{H^s(\R^+)}$. If in addition $f(0) = 0$, the same statement holds for $\frac12 < s < \frac32$.
\end{lemma}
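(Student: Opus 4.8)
The final statement is Lemma~\ref{char_func_lemma}, the multiplication-by-characteristic-function estimate attributed to \cite{ET}. Let me sketch a proof.

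\medskip

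The plan is to reduce everything to the line case via the restriction definition of $H^s(\R^+)$ and then use a combination of interpolation and a square-function / Littlewood--Paley argument for the boundary endpoint. First I would dispose of the statement about extensions: given $f \in H^s(\R^+)$, pick $F \in H^s(\R)$ with $F|_{\R^+} = f$ and $\|F\|_{H^s(\R)} \lesssim \|f\|_{H^s(\R^+)}$. Since $\chi f = \chi F|_{\R^+}$-extended-by-zero, it suffices to prove $\|\chi F\|_{H^s(\R)} \lesssim \|F\|_{H^s(\R)}$ for all $F \in H^s(\R)$ when $-\frac12 < s < \frac12$, and the analogous bound $\|\chi F\|_{H^s(\R)} \lesssim \|F\|_{H^s(\R)}$ when $\frac12 < s < \frac32$ under the extra hypothesis $F(0) = 0$ — note that $F(0) = f(0)$ makes sense in that range by Sobolev embedding, and in fact one can choose the extension to also vanish at $0$.

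\medskip

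For the range $|s| < \frac12$, the cleanest route is to recall that multiplication by $\chi$ is bounded on $H^s(\R)$ iff it is bounded on the dual $H^{-s}(\R)$, so by duality and interpolation it is enough to treat $0 \le s < \frac12$. For $s = 0$ the bound is trivial ($\chi$ is bounded on $L^2$ with norm $1$). For $0 < s < \frac12$ one uses the Gagliardo/Sobolev--Slobodeckij seminorm characterization
\[
\|g\|_{\dot H^s(\R)}^2 \approx \int_\R \int_\R \frac{|g(x) - g(y)|^2}{|x-y|^{1+2s}} \, \d x \, \d y,
\]
and splits the double integral according to whether $x,y$ are both positive, both negative, or of opposite sign. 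The first two contributions are controlled by $\|F\|_{\dot H^s}^2$ directly; the mixed contribution, where $g = \chi F$ forces $g(y) = 0$ for $y < 0$, reduces to $\int_0^\infty \int_{-\infty}^0 \frac{|F(x)|^2}{|x-y|^{1+2s}}\,\d x\,\d y \approx \int_0^\infty \frac{|F(x)|^2}{x^{2s}}\,\d x$, which is finite precisely because of the one-dimensional Hardy inequality $\int_0^\infty |F(x)|^2 x^{-2s}\,\d x \lesssim \|F\|_{\dot H^s(\R)}^2$, valid exactly for $0 < s < \frac12$. Combining with the $L^2$ bound and interpolating gives the full-norm estimate $\|\chi F\|_{H^s} \lesssim \|F\|_{H^s}$.

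\medskip

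For $\frac12 < s < \frac32$ with $F(0) = 0$, I would first choose the extension $F$ so that $F(0) = 0$ as well (possible since $f(0) = 0$ and the extension operator can be taken to preserve the trace). Write $s = 1 + \sigma$ is not quite right since $s$ can be below $1$; instead argue directly on the seminorm for the derivative: for $\frac12 < s < \frac32$ one has $\|g\|_{H^s} \approx \|g\|_{L^2} + \|g'\|_{H^{s-1}}$ with $s - 1 \in (-\frac12, \frac12)$. Now $(\chi F)' = \chi F' + F(0)\delta_0 = \chi F'$ since $F(0) = 0$, so the derivative term is $\|\chi F'\|_{H^{s-1}(\R)}$, and since $s-1 \in (-\frac12,\frac12)$ the first case applies to give $\|\chi F'\|_{H^{s-1}} \lesssim \|F'\|_{H^{s-1}} \lesssim \|F\|_{H^s}$; the $L^2$ term is immediate. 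This closes the argument.

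\medskip

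The main obstacle — really the only nonroutine point — is the mixed term in the Gagliardo integral, which is exactly where the restriction $|s| < \frac12$ (respectively $\frac12 < s < \frac32$ with vanishing trace) becomes essential: it is the Hardy inequality with weight $x^{-2s}$ that fails at $s = \frac12$, reflecting the fact that $\chi$ cannot be a multiplier on $H^{1/2}$. Everything else is soft (duality, interpolation, the derivative-reduction identity $(\chi F)' = \chi F'$ when $F(0)=0$). One should also be slightly careful that the implicit constants in the norm equivalences and the extension operator are independent of $f$, which they are.
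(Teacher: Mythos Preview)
The paper does not give its own proof of this lemma; it is stated with a citation to \cite{ET} and used as a black box. Your argument is correct and follows the standard route (Gagliardo seminorm plus the fractional Hardy inequality for $0<s<\tfrac12$, duality for negative $s$, and the derivative reduction $(\chi F)'=\chi F'$ when $F(0)=0$ for $\tfrac12<s<\tfrac32$).

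Two cosmetic remarks: first, duality alone already gives $-\tfrac12<s\le 0$ from $0\le s<\tfrac12$, and once the $L^2$ and Gagliardo pieces of $\chi F$ are each controlled the full $H^s$ bound follows directly, so the two mentions of ``interpolation'' are unnecessary (though harmless). Second, in the range $s>\tfrac12$ the condition $F(0)=0$ is automatic for \emph{any} $H^s(\R)$ extension of $f$, since $F$ is continuous and $F|_{\R^+}=f$ with $f(0)=0$; no special choice of extension is needed.
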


\section{A Priori Estimates}\label{estimates}

\subsection{Linear Estimates}

For the linear Schr\"odinger equation, we have the following estimates, which were proved in \cite{ET}. 

\begin{lemma}\label{schr_ivp_cont}
 For any $u_0 \in H^s(\R)$, we have $\eta(t) e^{it\Delta}u_0 \in C_x^0H^{\frac{2s+1}{4}}_t(\R \times \R)$ with the estimate
 \[ \| \eta e^{it\Delta} u_0\|_{L^\infty_xH_t^{\frac{2s+1}4}} \lesssim \| u_0 \|_{H^s(\R)}.\] 
\end{lemma}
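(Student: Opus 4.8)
## Proof Proposal

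The plan is to reduce the estimate to a standard one-dimensional trace/Strichartz-type bound for the linear Schr\"odinger group, exploiting the smoothing gain of $\frac14$ derivative in $x$ that one obtains when restricting $e^{it\Delta}u_0$ to a fixed spatial point. The key point is the space-time duality built into the exponent $\frac{2s+1}{4}$: a half derivative in $t$ at the boundary corresponds to one full derivative in $x$ of the data, so $H^s_x$ data gives $H^{(2s+1)/4}_t$ boundary regularity.

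First I would write, for fixed $x$,
\[
\eta(t)\, e^{it\Delta}u_0(x) \;=\; \eta(t)\int_{\R} e^{i x\xi}\, e^{-it\xi^2}\,\widehat{u_0}(\xi)\,\d\xi,
\]
and change variables $\tau = \xi^2$ on each half-line $\xi>0$ and $\xi<0$, so that $\d\xi = \frac{\d\tau}{2\sqrt\tau}$ and the expression becomes (up to the two branches) a Fourier transform in $t$ of a function supported on $\tau\ge 0$. This exhibits $e^{it\Delta}u_0(x)$, before the cutoff $\eta$, as having temporal Fourier transform essentially $\widehat{u_0}(\pm\sqrt\tau)\,\tau^{-1/2}\,e^{\pm i x\sqrt\tau}$ on $\{\tau > 0\}$. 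Weighting by $\langle\tau\rangle^{(2s+1)/4}$ and taking $L^2_\tau$, the Jacobian $\tau^{-1/2}$ combines with $\langle\tau\rangle^{(2s+1)/2}=\langle\xi\rangle^{2s+1}$ and, after undoing the substitution, with $\d\tau = 2\sqrt\tau\,\d\xi$, leaving exactly $\|\langle\xi\rangle^{s}\widehat{u_0}\|_{L^2_\xi}^2 = \|u_0\|_{H^s}^2$, uniformly in $x$ since $|e^{\pm i x\sqrt\tau}|=1$. The cutoff $\eta(t)$ is then handled by noting multiplication by $\eta$ is bounded on $H^\sigma_t$ for every $\sigma\in\R$ (it is a Schwartz function), which also takes care of the low-frequency region $\tau\lesssim 1$ where the substitution degenerates: there one simply uses $\|\eta(t)e^{it\Delta}u_0(x)\|_{H^\sigma_t}\lesssim \|\eta(t)e^{it\Delta}u_0(x)\|_{L^2_t}\cdot(\text{something})$ together with the trivial bound $|e^{it\Delta}u_0(x)|\lesssim \|\widehat{u_0}\|_{L^1(|\xi|\lesssim 1)}\lesssim \|u_0\|_{H^s}$ for any $s$. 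Continuity in $x$ follows from dominated convergence in the above representation.

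The main obstacle, and the only place requiring genuine care, is the low-frequency part $|\xi|\lesssim 1$, i.e. $\tau$ near $0$: there the change of variables $\tau=\xi^2$ has a singular Jacobian $\tau^{-1/2}$, and one cannot directly read off an $L^2_\tau$ bound from an $L^2_\xi$ bound of the data. The fix is a Littlewood--Paley-type splitting $u_0 = P_{\le 1}u_0 + P_{>1}u_0$: on $P_{>1}u_0$ the substitution is harmless and the computation above is clean, while on $P_{\le 1}u_0$ the operator $\eta(t)e^{it\Delta}P_{\le 1}$ maps $H^s(\R)\to H^\sigma_t$ (any $\sigma$) with a constant uniform in $x$, because it is smoothing of infinite order in the frequency-localized regime and $\eta$ provides temporal decay; here one just crudely estimates $\|\langle\tau\rangle^{(2s+1)/4}\widehat{\eta}\ast(\text{compactly supported smooth function of }\tau)\|_{L^2_\tau}$. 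Once both pieces are controlled uniformly in $x$, combining them and absorbing the $\eta$ cutoff yields the stated bound; this is, in essence, the argument of \cite{ET}, reproduced here for completeness.
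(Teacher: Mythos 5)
Your proposal is essentially correct, though the paper itself gives no proof of this lemma: it cites \cite{ET}, where the argument is a Cauchy--Schwarz one. Your route is a recognizable variant of that argument, so let me compare. In \cite{ET} one first takes the temporal Fourier transform of the cut-off flow, obtaining $\int \hat\eta(\tau+\xi^2)\,\widehat{u_0}(\xi)\,e^{ix\xi}\,\d\xi$, trades $\lb\tau\rb^{(2s+1)/4}$ for $\lb\xi\rb^{(2s+1)/2}\lb\tau+\xi^2\rb^{|(2s+1)/4|}$, and then applies Cauchy--Schwarz in $\xi$ with weight $\lb\xi\rb^{\mp s}$; the $\sigma=\xi^2$ substitution inside the resulting scalar integral absorbs the extra $\lb\xi\rb^{1/2}$ and produces the $\lb\tau\rb^{-s-1/2}$ gain, with the Jacobian singularity at $\xi=0$ being locally integrable and causing no trouble. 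Your version instead computes the parabola pushforward of the \emph{uncut} flow directly, observes the Jacobian discrepancy $\langle\xi\rangle/|\xi|$, and handles it by a Littlewood--Paley split; the cutoff $\eta$ is then imposed at the end via boundedness of multiplication by a Schwartz function on $H^\sigma_t$. Both are legitimate, and you correctly saw that the Young/Schur route the paper \emph{does} use for the Klein--Gordon analogue (Lemma \ref{ivp_cont}) cannot be lifted verbatim here: the quadratic dispersion puts an unbounded $\lb\xi\rb^{1/2}$ into the kernel, which Schur's test does not tolerate, whereas Cauchy--Schwarz (or your split) does. Two small imprecisions worth flagging: the phrase ``leaving exactly $\|\lb\xi\rb^s\widehat{u_0}\|_{L^2}^2$'' is not literal -- the pushforward leaves $\int \lb\xi\rb^{2s+1}|\widehat{u_0}|^2\,\d\xi/|\xi|$, which equals the $H^s$ norm only up to constants on $|\xi|\gtrsim 1$, precisely the discrepancy your LP split is there to absorb; and the parenthetical ``compactly supported smooth function of $\tau$'' for the low-frequency pushforward is not right, since that object has a $|\tau|^{-1/2}$ singularity at $\tau=0$ -- it is a finite, compactly supported measure, not a smooth function -- but the convolution-against-$\hat\eta$ estimate you invoke needs only finite total variation, so the conclusion stands.
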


\begin{lemma}\label{schr_ibvp_cont}
 For any $g$ such that $\chi g \in H^{\frac{2s+1}4}(\R)$, we have $W^t_0(0,g) \in C^0_tH^s_x(\R \times \R)$, and $\eta(t) W^t_0(0,g) \in C_x^0H_t^\frac{2s+1}4(\R \times \R)$. 
\end{lemma}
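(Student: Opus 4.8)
The plan is to combine the explicit oscillatory-integral representation of the Schr\"odinger boundary operator from \cite{ET} with a dominated-convergence (equivalently, density) argument: essentially all of the analytic work is already contained in \cite{ET}, and what remains is the soft upgrade from the relevant $L^\infty$-type bounds to continuity. Write $h:=\chi g\in H^{\frac{2s+1}4}(\R)$. As in \cite{ET} --- and in the spirit of Lemma \ref{AB}, which carries out the analogous computation for Klein--Gordon --- the Laplace transform gives
\[ W_0^t(0,g)(x) = \mathrm{I}(x,t) + \mathrm{II}(x,t), \qquad x\in\R, \]
where, up to signs and constants as in \cite{ET},
\[ \mathrm{I}(x,t) = \int_0^\infty e^{i\beta x}\,e^{it\beta^2}\,\beta\,\hat h(\beta^2)\,\d\beta, \qquad \mathrm{II}(x,t) = \int_0^\infty e^{-|x|\beta}\,e^{-it\beta^2}\,\beta\,\hat h(-\beta^2)\,\d\beta. \]
Here $\mathrm I$ is a \emph{radiating} term whose $x$-Fourier transform is $\chi_{(0,\infty)}(\beta)\,\beta\,\hat h(\beta^2)\,e^{it\beta^2}$, while $\mathrm{II}$ is an \emph{evanescent} boundary-layer term, smooth in $x$ away from $x=0$ and exponentially decaying. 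The boundedness estimates
\[ \sup_{t\in\R}\|W_0^t(0,g)\|_{H^s_x} + \sup_{x\in\R}\|\eta(t)\,W_0^t(0,g)\|_{H^{\frac{2s+1}4}_t} \lesssim \|h\|_{H^{\frac{2s+1}4}(\R)} \]
are precisely what is proved in \cite{ET}: in the change of variables $\rho=\beta^2$ the Jacobian converts the weight defining $H^s_x$ into the one defining $H^{\frac{2s+1}4}_t$, so that $\|\beta\,\langle\beta\rangle^{s}\,\hat h(\pm\beta^2)\|_{L^2_\beta}\approx\|h\|_{H^{\frac{2s+1}4}(\R)}$, and the relevant linear maps (Plancherel for $\mathrm I$, a Schur-type kernel bound for $\mathrm{II}$, and likewise on the $t$-side) are shown there to be bounded.

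For $W_0^t(0,g)\in C^0_tH^s_x$: the time variable enters $\mathrm I$ and $\mathrm{II}$ only through the unimodular factors $e^{\pm it\beta^2}$ under the $\beta$-integral. Hence, for $t'\to t$,
\[ \big\|\mathrm I(\cdot,t')-\mathrm I(\cdot,t)\big\|_{H^s_x}\lesssim\Big\|\big(e^{it'\beta^2}-e^{it\beta^2}\big)\,\beta\,\langle\beta\rangle^{s}\,\hat h(\beta^2)\Big\|_{L^2_\beta}\longrightarrow 0 \]
by dominated convergence --- the integrand tends to $0$ pointwise and is dominated by $2\beta\langle\beta\rangle^{s}|\hat h(\beta^2)|\in L^2_\beta$, which lies in $L^2$ precisely because $h\in H^{\frac{2s+1}4}(\R)$ --- and the same bound holds for $\mathrm{II}$, with the Schur-type bound from \cite{ET} in place of Plancherel. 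Thus $t\mapsto W_0^t(0,g)$ is continuous at every point, i.e. $W_0^t(0,g)\in C^0_tH^s_x(\R\times\R)$.

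For $\eta(t)\,W_0^t(0,g)\in C^0_xH^{\frac{2s+1}4}_t$: the spatial variable enters $\mathrm I$ through the unimodular factor $e^{i\beta x}$ and enters $\mathrm{II}$ through $e^{-|x|\beta}$, which is bounded by $1$, continuous in $x$ on all of $\R$ (including across $x=0$), and converges pointwise in $\beta$ as $x'\to x$. Taking the $t$-Fourier transform --- the cutoff $\eta(t)$ makes this well defined, replacing $e^{\pm it\beta^2}$ by $\hat\eta(\tau\pm\beta^2)$ --- and applying the maps of \cite{ET} bounded into $H^{\frac{2s+1}4}_t$ together with dominated convergence in $\beta$ (with domination $|e^{i\beta x'}-e^{i\beta x}|\le2$, resp. $|e^{-|x'|\beta}-e^{-|x|\beta}|\le2$), we obtain $\|\eta\,W_0^t(0,g)(x')-\eta\,W_0^t(0,g)(x)\|_{H^{\frac{2s+1}4}_t}\to 0$ as $x'\to x$. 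Equivalently, one may first verify both continuity statements directly for $g$ with $\chi g$ Schwartz, where all integrals converge absolutely and differentiation under the integral sign is legitimate, and then pass to general $g$ via density of Schwartz functions in $H^{\frac{2s+1}4}(\R)$ and the uniform bounds above, using that a uniform limit of continuous Banach-space-valued functions is continuous.

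The main obstacle lies not in the argument above but in the estimates imported from \cite{ET}: the delicate point is the behaviour of the evanescent term $\mathrm{II}$ near the boundary $x=0$ and at low spatial frequency, where the change of variables $\rho=\beta^2$ degenerates and $e^{-|x|\beta}$ is not smooth at $x=0$ --- this is exactly where the Kato-smoothing-type local-in-time estimates of \cite{ET} are used. Granted those, the passage from boundedness to continuity is the routine dominated-convergence argument above, with the truncation $\eta(t)$ present only to keep the time integrals finite.
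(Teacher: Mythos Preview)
Your proposal is correct and aligns with the paper's treatment: the paper does not give its own proof of this lemma but simply records it as one of the linear Schr\"odinger estimates ``proved in \cite{ET}.'' Your sketch --- the explicit $A+B$ (radiating/evanescent) decomposition of $W_0^t(0,g)$, the change of variables $\rho=\beta^2$ matching the $H^s_x$ and $H^{\frac{2s+1}4}_t$ weights, and the dominated-convergence upgrade from uniform boundedness to continuity --- is exactly the content of the argument in \cite{ET}, so you have effectively reproduced what the paper imports by citation. One minor cosmetic point: the evanescent term in \cite{ET} (and as quoted later in this paper, Section~\ref{T_power_est_prf}) carries the smooth cutoff $\rho(\beta x)$ rather than your $e^{-|x|\beta}$ when extended to $x<0$; this does not affect the argument, since both extensions agree on $\R^+$ and your dominated-convergence reasoning applies equally to either form.
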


\begin{lemma}\label{schr_ibvp_xsb}
 Let $b \leq \frac12$. Then for $g$ such that $\chi g \in H^{\frac{2s+1}4}(\R)$, we have 
 \[ \| \eta(t) W_0^t(0,g)\|_{X^{s,b}} \lesssim \| \chi g\|_{H^\frac{2s+1}4(\R)}.\] 
 Furthermore, for $T \lesssim 1$, we have 
 \[ \| \eta_T(t) W_0^t(0,g)\|_{X^{s,b}} \lesssim  T^{1/2 - |b|} \| \chi g\|_{H^\frac{2s+1}4(\R)}.\] 
\end{lemma}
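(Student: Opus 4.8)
The plan is to reduce the lemma to the explicit representation of the Schr\"odinger boundary operator established in \cite{ET}. Writing $\tilde g=\chi g$ (so that $\|\chi g\|_{H^{\frac{2s+1}4}(\R)}=\|\tilde g\|_{H^{\frac{2s+1}4}(\R)}$), one has, up to absolute constants, a decomposition
\[ W_0^t(0,g)(x)=W^{\mathrm{osc}}(x,t)+W^{\mathrm{ev}}(x,t) \]
entirely analogous to the splitting $B+A$ of Lemma \ref{AB}: $W^{\mathrm{osc}}$ is a superposition of free Schr\"odinger plane waves $e^{i\xi x}$ over $\xi>0$ (whose space-time Fourier transform therefore lives on the surface $\tau=\xi^2$), while $W^{\mathrm{ev}}$ is a superposition over $\omega>0$ of the spatially-decaying modes $e^{i\omega t-\sqrt\omega|x|}$, which solve the Schr\"odinger equation on $\R^+_x\times\R$ but not on all of $\R_x\times\R$. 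I would bound the two pieces separately, and in each case obtain the time-localized estimate from the same computation with $\eta$ replaced by $\eta_T$, tracking $T$-powers as in the proof of \eqref{eq:T_power_linear}.

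For $W^{\mathrm{osc}}$ the point is that $\eta(t)W^{\mathrm{osc}}=\eta(t)e^{it\Delta}\psi$ for a function $\psi$ obtained from $\tilde g$ by a change of frequency variable, namely $\hat\psi(\xi)=\chi_{\{\xi>0\}}\,2\xi\,\hat{\tilde g}(-\xi^2)$ up to a constant. Thus \eqref{eq:xs1} gives $\|\eta(t)W^{\mathrm{osc}}\|_{X^{s,b}}\lesssim\|\psi\|_{H^s}$, and the substitution $\nu=\xi^2$ yields
\[ \|\psi\|_{H^s}^2\approx\int_0^\infty\langle\xi\rangle^{2s}\xi^2\,|\hat{\tilde g}(-\xi^2)|^2\,\d\xi=\tfrac12\int_0^\infty\langle\sqrt\nu\rangle^{2s}\sqrt\nu\,|\hat{\tilde g}(-\nu)|^2\,\d\nu\lesssim\|\tilde g\|_{H^{\frac{2s+1}4}(\R)}^2, \]
the last step using $\langle\sqrt\nu\rangle^{2s}\sqrt\nu\lesssim\langle\nu\rangle^{\frac{2s+1}2}$ (the two sides are comparable to $\nu^{s+1/2}$ for $\nu\gtrsim1$, and the left side is $\lesssim1$ for $\nu\lesssim1$). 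For the time-localized version one instead has $\|\eta_T(t)e^{it\Delta}\psi\|_{X^{s,b}}\lesssim T^{\frac12-|b|}\|\psi\|_{H^s}$ for every $b\le\frac12$ and $T\le1$: a direct computation of the space-time Fourier transform of $\eta_T(t)e^{it\Delta}\psi$ reduces this to $\int_{\R}\langle\sigma\rangle^{2b}|\widehat{\eta_T}(\sigma)|^2\,\d\sigma\lesssim T^{1-2|b|}$, which holds since $T\le1$ (compare \eqref{eq:T_power_linear}). This gives both claimed bounds for $W^{\mathrm{osc}}$.

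The evanescent term is the main obstacle, and it is precisely here that the hypothesis $b\le\frac12$ is used. Its space-time Fourier transform is not supported near $\tau=\xi^2$; schematically, up to the normalization of the amplitude,
\[ \widehat{\eta(t)W^{\mathrm{ev}}}(\xi,\tau)=c\int_0^\infty\hat{\tilde g}(\omega)\,\hat\eta(\tau+\omega)\,\frac{\sqrt\omega}{\omega+\xi^2}\,\d\omega, \]
the factor $\sqrt\omega\,(\omega+\xi^2)^{-1}$ being a multiple of the spatial Fourier transform of $e^{-\sqrt\omega|x|}$. Applying Cauchy--Schwarz in $\omega$ against $|\hat\eta(\tau+\omega)|$ and using the rapid decay of $\hat\eta$ to perform the $\tau$-integration (which forces $\tau\approx-\omega$), the estimate reduces to controlling
\[ \int_0^\infty|\hat{\tilde g}(\omega)|^2\,\omega\,\Big(\int_{\R}\frac{\langle\xi\rangle^{2s}\langle\omega+\xi^2\rangle^{2b}}{(\omega+\xi^2)^2}\,\d\xi\Big)\,\d\omega. \]
For $\omega\gtrsim1$ the inner $\xi$-integral, split at $|\xi|=\sqrt\omega$, is comparable to $\omega^{s+2b-\frac32}$ (the tail $|\xi|\gtrsim\sqrt\omega$ converging because $2s+4b-4<-1$ when $s<\frac12$, $b\le\frac12$), while the bounded range $\omega\lesssim1$ is handled directly; hence the displayed quantity is $\lesssim\int_0^\infty\langle\omega\rangle^{s+2b-\frac12}|\hat{\tilde g}(\omega)|^2\,\d\omega$. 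Since $s+2b-\tfrac12\le s+\tfrac12$ exactly when $b\le\tfrac12$, this is $\lesssim\|\tilde g\|_{H^{\frac{2s+1}4}(\R)}^2$, with $b=\tfrac12$ the marginal case. The $T^{\frac12-|b|}$ factor in the time-localized estimate is recovered by replacing $\hat\eta$ with $\widehat{\eta_T}=T\hat\eta(T\,\cdot)$, splitting the $\omega$-integral at $\omega\sim T^{-1}$, and using the scaling identities $\|\widehat{\eta_T}\|_{L^1}\lesssim1$ and $\|\widehat{\eta_T}\|_{L^2}\lesssim T^{1/2}$ exactly as in the proof of \eqref{eq:T_power_linear}. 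Adding the contributions of $W^{\mathrm{osc}}$ and $W^{\mathrm{ev}}$ establishes both inequalities; the bookkeeping for the evanescent integral, especially the balance at $b=\frac12$ and the $T$-dependence, is the one genuinely delicate point.
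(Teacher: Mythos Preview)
Your proposal is correct and follows essentially the same approach as the paper, which cites \cite{ET} for the first inequality and obtains the second from the scaling identity $\widehat{\eta_T}(\tau)=T\,\hat\eta(T\tau)$ via a change-of-variables argument parallel to its proof of the Klein--Gordon analog (Lemma~\ref{ibvp_xsb}). Your oscillatory/evanescent splitting and the ensuing estimates reproduce precisely the argument in \cite{ET} together with the $T$-tracking the paper carries out explicitly only in the Klein--Gordon case.
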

The proof of the first part of Lemma \ref{schr_ibvp_xsb} above is in \cite{ET}. The second statement comes from the fact that $\widehat{\eta_T}(\tau) = T \,\widehat{\eta}(\tau T)$, via a change of variables argument very similar to what we will use to prove Lemma \ref{ibvp_xsb} below. 

For the Klein-Gordon part, the following estimates hold. Proofs are in Sections \ref{ivp_cont_prf}-\ref{ibvp_cont_prf}. 

\begin{lemma}\label{ivp_cont}
 For $g \in H^s(\R)$, we have $\eta(t) e^{\pm itD} g \in C^0_xH^s_t(\R \times \R)$ with the bound
 \[ \| \eta(t) e^{\pm i t D } g \|_{L^\infty_xH^s_t} \lesssim \|g\|_{H^s(\R)}.\] 
\end{lemma}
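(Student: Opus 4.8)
The plan is to reduce the estimate to a one-dimensional Sobolev trace-type bound by writing the flow explicitly on the Fourier side and treating the $x$-variable as a parameter. Write
\[
\eta(t) e^{\pm i t D} g (x) = \frac{1}{2\pi} \int_\R e^{i x \xi}\, \eta(t)\, e^{\pm i t \sgn(\xi)\sqrt{1+\xi^2}}\, \hat g(\xi)\, \d\xi.
\]
Fix $x \in \R$; the goal is to bound the $H^s_t(\R)$-norm of this function of $t$ uniformly in $x$. Since the $H^s_t$-norm only depends on the $t$-frequency variable, I would take the Fourier transform in $t$, which turns $\eta(t) e^{\pm i t \lambda}$ into $\hat\eta(\tau \mp \lambda)$ with $\lambda = \sgn(\xi)\sqrt{1+\xi^2}$. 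Thus the $t$-Fourier transform of the whole expression is
\[
\frac{1}{2\pi}\int_\R e^{ix\xi}\, \hat\eta\bigl(\tau \mp \sgn(\xi)\sqrt{1+\xi^2}\bigr)\, \hat g(\xi)\, \d\xi,
\]
and what must be controlled is the $L^2_\tau$-norm of $\langle \tau\rangle^s$ times this quantity, uniformly in $x$.

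The main step is then a change of variables in $\xi$. On each half-line $\xi > 0$ and $\xi < 0$ the map $\xi \mapsto \lambda = \sgn(\xi)\sqrt{1+\xi^2}$ is a smooth bijection onto $(1,\infty)$ resp.\ $(-\infty,-1)$, with $|\xi| = \sqrt{\lambda^2 - 1}$ and Jacobian $\d\xi = \frac{|\lambda|}{\sqrt{\lambda^2-1}}\,\d\lambda$. After this substitution the integral becomes, up to the harmless oscillatory factor $e^{ix\xi(\lambda)}$ of modulus one, an integral of $\hat\eta(\tau\mp\lambda)$ against $\hat g(\xi(\lambda)) \frac{|\lambda|}{\sqrt{\lambda^2-1}}$ over $\{|\lambda|>1\}$; bounding $L^\infty_x$ just means dropping that unit-modulus factor. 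Using $\langle\tau\rangle^s \lesssim \langle\tau\mp\lambda\rangle^{|s|}\langle\lambda\rangle^s$ (valid for any real $s$) and that $\langle\tau\mp\lambda\rangle^{|s|}\hat\eta(\tau\mp\lambda)$ is a Schwartz function of $\tau\mp\lambda$, hence in $L^1$, I would apply Minkowski's inequality / Young's convolution inequality in $\tau$ to pull the $\hat\eta$-factor out, reducing matters to
\[
\Bigl\| \langle\lambda\rangle^s\, \hat g(\xi(\lambda))\, \tfrac{|\lambda|}{\sqrt{\lambda^2-1}} \Bigr\|_{L^2_\lambda(\{|\lambda|>1\})}.
\]
Undoing the change of variables, and noting $\langle\lambda\rangle \approx \langle\xi\rangle$ and $\frac{|\lambda|}{\sqrt{\lambda^2-1}} = \frac{\d\lambda}{\d\xi}$ so that $\bigl(\frac{|\lambda|}{\sqrt{\lambda^2-1}}\bigr)^2 \d\lambda^{-1} = \frac{\d\lambda}{\d\xi}\,\d\xi$, this is comparable to $\| \langle\xi\rangle^s \hat g(\xi) (\d\lambda/\d\xi)^{1/2}\|_{L^2_\xi}$; since $\d\lambda/\d\xi = |\lambda|/|\xi| = \langle\xi\rangle/|\xi|$ is bounded for $|\xi|\gtrsim 1$ this is $\lesssim \|g\|_{H^s(\R)}$ on that region, and the region $|\xi|\lesssim 1$ is trivial because there both $\langle\xi\rangle$ and the symbol are bounded. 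Continuity in $x$ (i.e.\ membership in $C^0_x H^s_t$) follows from dominated convergence applied to the above representation, the dominating function being independent of $x$.

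The main obstacle is purely the low-frequency behavior: the Jacobian $\frac{|\lambda|}{\sqrt{\lambda^2-1}}$ blows up as $|\xi|\to 0$ (equivalently $|\lambda|\to 1$), so one cannot literally change variables globally. The clean way around this is to split $g = g_{\mathrm{lo}} + g_{\mathrm{hi}}$ with $\hat g_{\mathrm{lo}}$ supported in $|\xi|\le 1$ and $\hat g_{\mathrm{hi}}$ in $|\xi|\ge \frac12$; for $g_{\mathrm{lo}}$ the symbol $e^{\pm it\sgn(\xi)\sqrt{1+\xi^2}}$ has bounded $t$-frequency support uniformly in $\xi$, so $\eta(t)e^{\pm itD}g_{\mathrm{lo}}(x)$ is, for each $x$, a superposition of modulations of $\eta$ over a compact frequency window and the $H^s_t$ bound is immediate from $\|\hat g_{\mathrm{lo}}\|_{L^1_\xi}\lesssim \|g\|_{L^2}$ and Cauchy-Schwarz; for $g_{\mathrm{hi}}$ the change-of-variables argument above applies with no singularity. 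This is exactly the kind of routine splitting used for the analogous Schrödinger statement in \cite{ET}, so I expect the argument to go through without surprises.
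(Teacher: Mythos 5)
Your proof is correct and follows essentially the same route as the paper: take the Fourier transform in $t$ (reducing to an $x$-uniform bound by translation invariance of the $H^s_t$ norm), apply the weight inequality $\lb\tau\rb^s\lesssim\lb\tau\mp\sgn(\xi)\lb\xi\rb\rb^{|s|}\lb\xi\rb^s$, and finish with a Young/Schur-type convolution bound exploiting that $\eta$ is Schwartz. The only difference is that the paper dispatches the final step in one line as ``Young's inequality,'' while you correctly notice that the change of variables $\xi\mapsto\sgn(\xi)\lb\xi\rb$ has a singular Jacobian at $\xi=0$ and resolve this by a high/low frequency split --- this is a sensible way to make the paper's compressed assertion rigorous, but it is not a different method.
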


\begin{lemma}\label{ibvp_xsb}
 Fix $s, b \in \R$. Then for $h$ such that $\chi h \in H^{s}(\R)$, we have 
 \[ \| \eta(t) V_0^t(0,0,h)\|_{Y^{s,b}} \lesssim \| \chi h \|_{H^{s}(\R)}.\] 
 Furthermore, for $T \lesssim 1$, we have 
 \[ \| \eta_T(t) V_0^t(0,0,h)\|_{Y^{s,b}} \lesssim T^{1/2 - |b|} \| \chi h \|_{H^{s}(\R)}.\] 
\end{lemma}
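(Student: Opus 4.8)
The plan is to use the explicit representation $V_0^t(0,0,h) = \frac1{2\pi}(A+B)$ from Lemma \ref{AB} and estimate the two pieces separately in $Y^{s,b}$. The term $B$ is the "wave-like" part: changing variables $\mu \mapsto \lambda = -\mu\sqrt{1+1/\mu^2}$ (so that $\lambda$ ranges over $|\lambda|>1$ and the phase becomes $e^{i\lambda t + i\mu(\lambda)x}$ where $\mu(\lambda)=\sgn(\lambda)\sqrt{\lambda^2-1}$), one sees that after multiplying by $\eta(t)$, $B$ is essentially a superposition over the spatial frequency $\mu$ of functions whose space-time Fourier transform is $\widehat\eta(\tau-\lambda)$ concentrated near $\tau \approx \pm\sqrt{\xi^2+1}$, i.e. near the curve $\tau \mp \xi = O(1/\langle\xi\rangle)$. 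So $B$ looks like a modulated half-wave and the $Y^{s,b}$ norm is controlled by $\|\langle\mu\rangle^s \hat h(\lambda(\mu))\cdots\|_{L^2}$, which after changing variables back is $\lesssim \|\chi h\|_{H^s(\R)}$; the weight $\langle \tau\mp\xi\rangle^b$ is harmless because $\widehat\eta$ is Schwartz and the modulation is bounded. For the term $A$, which is exponentially decaying in $x$ (frequency $\mu \in [-1,1]$, so a "low-frequency/evanescent" contribution), I would take the spatial Fourier transform of $e^{-x\sqrt{1-\mu^2}}\rho(x\sqrt{1-\mu^2})\chi(x)$ — this is a nice function of $\xi$ uniformly in $\mu$, decaying in $\xi$ — and then the time behavior is just $e^{i\mu t}$ cut off by $\eta(t)$, giving Fourier support near $\tau\approx\mu$, hence near $\tau = O(1)$, so $\langle\tau\mp\xi\rangle \approx \langle\xi\rangle$ and the $Y^{s,b}$ norm collapses (up to the rapidly decaying tail in $\xi$) to $\| \langle\mu\rangle^s \hat h(\mu)\|_{L^2(-1,1)} \lesssim \|\chi h\|_{H^s(\R)}$.

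The second (time-localized) statement follows from the first by the standard scaling trick: write $\eta_T(t) = \eta_T(t)\eta(t)$ and use $\widehat{\eta_T}(\tau) = T\widehat\eta(T\tau)$. Concretely, since multiplication by $\eta_T$ convolves the $\tau$-variable with $T\widehat\eta(T\cdot)$, one gains a factor $T^{1/2-|b|}$ from comparing $\langle\tau\rangle^b$ before and after the convolution — this is exactly the mechanism behind \eqref{eq:xs3} and the analogous statement for $W_0^t(0,g)$ in Lemma \ref{schr_ibvp_xsb}, so I would carry out the same change-of-variables computation that the text already advertises ("very similar to what we will use to prove Lemma \ref{ibvp_xsb}"). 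Finally, the claim for general $b\in\R$ (not just $b<\frac12$) is available here precisely because $\widehat\eta$ is Schwartz and, on the support of $\widehat{\eta V_0^t(0,0,h)}$ relevant to each piece, the modulation weight $\langle\tau\mp\xi\rangle^b$ is comparable to a fixed power of $\langle\xi\rangle$ times a Schwartz factor in the transverse variable; there is no resonance to worry about because the boundary operator is not a genuine solution of the free equation in a way that would put its Fourier transform on the characteristic set.

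The main obstacle I expect is the bookkeeping in the $B$ term: one must track how the change of variables $\mu\leftrightarrow\lambda$ distorts the Sobolev weight $\langle\xi\rangle^s$ versus $\langle\mu\rangle^s$ (these are comparable, since $|\mu(\lambda)| = \sqrt{\lambda^2-1} \approx |\lambda|$ for $|\lambda|$ large, but one must handle the region $|\lambda|$ near $1$ where the Jacobian $\frac{d\lambda}{d\mu}$ degenerates and $\mu\to 0$), and to confirm that the contribution of $\hat h$ evaluated at the distorted frequency $-\mu\sqrt{1+1/\mu^2}$ really does reassemble into $\|\chi h\|_{H^s(\R)}$ after undoing the substitution, including the Jacobian factor $\frac{1}{\sqrt{1+1/\mu^2}}$ already present in the formula. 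A secondary, more routine nuisance is justifying that it suffices to prove the bound for Schwartz $h$ (as in Lemma \ref{AB}) and then pass to general $\chi h \in H^s(\R)$ by density.
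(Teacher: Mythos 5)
Your plan follows essentially the same route as the paper's proof: decompose $V_0^t(0,0,h)=\tfrac1{2\pi}(A+B)$ via Lemma \ref{AB}, treat the evanescent term $A$ using the Schwartz decay of $f(y)=e^{-y}\rho(y)$ and the confinement $|\mu|\le 1$, rewrite $B$ as a half-wave multiplier evolution $L^t\phi$ with $\hat\phi(\mu)=\hat h(-\mu\sqrt{1+1/\mu^2})/\sqrt{1+1/\mu^2}$, and gain the $T^{1/2-|b|}$ factor from $\widehat{\eta_T}(\tau)=T\widehat\eta(T\tau)$. The Jacobian/degeneracy bookkeeping near $|\lambda|=1$ that you flag is indeed the only delicate point and is handled in the paper by the change of variables $z=-\mu\sqrt{1+1/\mu^2}$ with the bound $|\mu|/|z|\le 1$; the one point worth making crisper in your sketch is that for $B$ one must split the frequency line by $\sgn(\xi)$ and assign the $\xi>0$ piece to $Y^{s,b}_-$ and the $\xi<0$ piece to $Y^{s,b}_+$ (exploiting the infimum in the definition of $Y^{s,b}$) before the modulation weight becomes harmless.
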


\begin{lemma}\label{ibvp_cont}
For $h$ such that $\chi h \in H^s(\R)$, we have $V^t_0(0,0,h) \in C_t^0H_x^s(\R \times \R)$ and $ V_0^t(0,0,h) \in C^0_xH^s_t(\R \times \R)$. 
\end{lemma}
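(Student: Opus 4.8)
These two statements are of different flavor, so I will handle them one at a time.

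For the first assertion, $V_0^t(0,0,h) \in C_t^0 H_x^s$, there is essentially nothing to do beyond invoking what is already available. By Lemma \ref{ibvp_xsb}, applied with the given $s$ and any fixed $b>\frac12$, we have $\eta(t)V_0^t(0,0,h)\in Y^{s,b}$, and the embedding $Y^{s,b}\hookrightarrow C_t^0H_x^s$ recalled above then shows $t\mapsto \eta(t)V_0^t(0,0,h)\in H_x^s$ is continuous. Since $\eta\equiv 1$ near $t=0$ this coincides with $V_0^t(0,0,h)$ there; running the same argument with time-cutoffs adapted to other windows (only a compact time interval is ever needed in the applications) covers $\R_t$.

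The second assertion, $V_0^t(0,0,h)\in C_x^0H_t^s$, has no such shortcut — there is no $Y^{s,b}$-type embedding into $C_x^0H_t^s$ — so I will work directly from the explicit representation $V_0^t(0,0,h)=\frac1{2\pi}(A+B)$ of Lemma \ref{AB}. It suffices to prove, for $h$ Schwartz on $\R^+$ (then extend to general $h$ with $\chi h\in H^s(\R)$ by a routine density argument, $C_x^0H_t^s$ being complete under $\sup_x\|\cdot\|_{H_t^s}$), the uniform bounds $\sup_{x\in\R}\bigl(\|A(x,\cdot)\|_{H_t^s}+\|B(x,\cdot)\|_{H_t^s}\bigr)\lesssim\|\chi h\|_{H^s(\R)}$ together with continuity of $x\mapsto A(x,\cdot)$ and $x\mapsto B(x,\cdot)$ into $H_t^s$. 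For the term $B$, the key move is the change of variables $\lambda=-\mu\sqrt{1+1/\mu^2}=-\sgn(\mu)\sqrt{\mu^2+1}$, a bijection of $\R\setminus\{0\}$ onto $\{|\lambda|>1\}$ with inverse $\mu=-\sgn(\lambda)\sqrt{\lambda^2-1}$; one checks that the weight $(1+1/\mu^2)^{-1/2}\,d\mu$ transforms exactly into $d\lambda$ (the Jacobian $|\lambda|/\sqrt{\lambda^2-1}$ cancels the weight $\sqrt{\lambda^2-1}/|\lambda|$, and the apparent singularity at $|\lambda|=1$ and the degeneracy at $\mu=0$ offset), so that
\[ B(x,t)=\int_{|\lambda|>1} e^{i\lambda t}\,e^{-i\sgn(\lambda)\sqrt{\lambda^2-1}\,x}\,\widehat{\chi h}(\lambda)\,d\lambda. \]
For fixed $x$, the Fourier transform in $t$ is then $\mathcal F_t[B(x,\cdot)](\tau)=2\pi\,\mathbf 1_{\{|\tau|>1\}}\,e^{-i\sgn(\tau)\sqrt{\tau^2-1}\,x}\,\widehat{\chi h}(\tau)$, and since the exponential factor is unimodular when $|\tau|>1$, Plancherel gives $\|B(x,\cdot)\|_{H_t^s}\leq 2\pi\|\chi h\|_{H^s(\R)}$ uniformly in $x$, with continuity of $x\mapsto B(x,\cdot)$ following from dominated convergence (the phase is continuous in $x$ for each $|\tau|>1$ and bounded by $1$).

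For the term $A$, the temporal frequency $\mu$ lives in $[-1,1]$, where $\langle\tau\rangle\approx 1$, so the $H_t^s$-size is governed by the $L^2$-size; the only extra input is that the spatial profile $x\mapsto e^{-x\sqrt{1-\mu^2}}\rho(x\sqrt{1-\mu^2})$ be bounded uniformly over $(x,\mu)\in\R\times(-1,1)$ and continuous in $x$, which follows from the explicit form of $\rho$ established in Appendix \ref{appendA}. Granting it, $\mathcal F_t[A(x,\cdot)](\tau)$ is supported in $[-1,1]$ and equals $2\pi\, e^{-x\sqrt{1-\tau^2}}\rho(x\sqrt{1-\tau^2})\,\widehat{\chi h}(\tau)$ there, whence $\|A(x,\cdot)\|_{H_t^s}\lesssim\|\widehat{\chi h}\|_{L^2(-1,1)}\lesssim\|\chi h\|_{H^s(\R)}$ uniformly in $x$, again with continuity in $x$ by dominated convergence. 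The substantive step in all of this is the change of variables in $B$: one must track the two sign branches of the square root, verify the Jacobian–weight cancellation, and confirm that no contribution is lost as $\mu\to0$ or $\mu\to\pm\infty$. Everything else — Plancherel, dominated convergence, the density argument, and Lemma \ref{ibvp_xsb} for the first assertion — is routine, the only other mildly delicate point being the uniform-in-$x$ (in particular, $x<0$) boundedness of the spatial profile of $A$, which is deferred to the appendix.
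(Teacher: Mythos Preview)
Your argument is correct. For the $C_x^0H_t^s$ assertion you do essentially what the paper does: compute the temporal Fourier transform of $A$ and $B$ directly, observing that the phase factor in $B$ is unimodular after the change of variables $\lambda=-\mu\sqrt{1+1/\mu^2}$ (the paper records the same identity as $\|B\|_{H^s_t}=\|\lb\mu\rb^s\chi_{|\mu|\geq1}e^{-ix\mu\sqrt{1-1/\mu^2}}\hat h(\mu)\|_{L^2_\mu}$), and that the spatial profile $f(x\sqrt{1-\mu^2})=e^{-x\sqrt{1-\mu^2}}\rho(x\sqrt{1-\mu^2})$ in $A$ is uniformly bounded.

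For the $C_t^0H_x^s$ assertion your route genuinely differs from the paper's. The paper argues directly: it bounds $\|A(\cdot,t)\|_{H_x^s}$ by exploiting the rapid decay of $\hat f(\xi/\sqrt{1-\mu^2})$, and for $B$ it writes $B=L^t\phi$ with $L^t$ a unitary multiplier on $H_x^s$ and $\|\phi\|_{H^s}\leq\|\chi h\|_{H^s}$. You instead invoke Lemma~\ref{ibvp_xsb} with some $b>\tfrac12$ (the lemma is stated for all $b\in\R$, unlike its Schr\"odinger analogue Lemma~\ref{schr_ibvp_xsb} which requires $b\leq\tfrac12$) and then the embedding $Y^{s,b}\hookrightarrow C_t^0H_x^s$. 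This is a legitimate and economical shortcut; it buys you the continuity and the bound in one stroke, at the cost of relying on the full strength of Lemma~\ref{ibvp_xsb}. The paper's direct argument is more self-contained and makes the mechanism (decay of $\hat f$, unitarity of $L^t$) transparent, but yours is shorter.
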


\subsection{Nonlinear Estimates}

For the integral term in the Schr\"odinger equation, we have the following estimate. 

\begin{proposition}[\cite{ET1}] \label{schr_duhamel_est}
 For any $b < \frac12$, we have 
\begin{multline*}
 \left\| \eta(t) \int_0^t e^{i(t-t') \Delta} F \d t' \right\|_{C_x^0 H^\frac{2s_0+1}{4}_t(\R \times \R)} \\
 \lesssim \begin{cases} \| F\|_{X^{s_0,-b}} &\text{for } -\frac12 < s_0 \leq \frac12,\\
           \|F\|_{X^{s_0,-b}} + \| \int \lb \lambda + \xi^2 \rb^\frac{2s_0-3}{4} | \hat{F}(\xi, \lambda)| \d \xi \|_{L^2_\xi}  &\text{for } s_0 > \frac12.  
          \end{cases}
 \end{multline*}
\end{proposition}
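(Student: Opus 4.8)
The plan is to reduce the statement to a purely one-dimensional (in the time variable, at fixed $x$) estimate by exploiting the explicit oscillatory-integral representation of the Duhamel term. First I would write, with $\widehat{F}$ denoting the space-time Fourier transform,
\[
 \eta(t)\int_0^t e^{i(t-t')\Delta}F\,\d t'
 = \eta(t)\int_{\R^2} e^{ix\xi}\, \frac{e^{it\tau} - e^{-it\xi^2}}{i(\tau+\xi^2)}\,\widehat{F}(\xi,\tau)\,\d\xi\,\d\tau,
\]
using the standard identity $\int_0^t e^{i(t-t')\Delta}F\,\d t' = \mathcal{F}^{-1}\bigl[(e^{it\tau}-e^{-it\xi^2})/(i(\tau+\xi^2))\bigr]$ after writing $\widehat{F}(\xi,t') $ in its $\tau$-Fourier representation. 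Splitting the kernel into the near-resonant region $|\tau+\xi^2|\le 1$ and the far region $|\tau+\xi^2|>1$, and further Taylor-expanding $e^{it\tau}-e^{-it\xi^2}$ in the near region, one is left with two kinds of contributions: a ``homogeneous-looking'' piece proportional to $\eta(t)e^{-it\xi^2}$ times a function of $\xi$ only, and a ``genuinely forced'' piece of the form $\eta(t)\int e^{it\tau}m(\xi,\tau)\widehat{F}(\xi,\tau)\,\d\xi\,\d\tau$ with $|m(\xi,\tau)|\lesssim \lb\tau+\xi^2\rb^{-1}$ in the far region and $|m|\lesssim 1$ in the near region. This is exactly the decomposition used in \cite{ET}, \cite{ET1}; I would cite those computations rather than reproduce them.

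Next I would estimate each piece in $C^0_x H^{\frac{2s_0+1}{4}}_t$. For the homogeneous piece, Lemma \ref{schr_ivp_cont} gives directly $\|\eta(t)e^{it\Delta}v_0\|_{L^\infty_x H^{(2s_0+1)/4}_t}\lesssim \|v_0\|_{H^{s_0}}$, so it suffices to bound the relevant $H^{s_0}$ norm of $v_0(x):=\mathcal{F}^{-1}_\xi\bigl[\int m(\xi,\tau)\widehat F(\xi,\tau)\,\d\tau\bigr]$; in the near region $|\tau+\xi^2|\le1$ this is a Cauchy–Schwarz estimate $\bigl|\int_{|\tau+\xi^2|\le1}\widehat F\,\d\tau\bigr|\lesssim \bigl(\int\lb\tau+\xi^2\rb^{-2b}|\widehat F|^2\,\d\tau\bigr)^{1/2}$ valid for $b<\frac12$, which produces the $\|F\|_{X^{s_0,-b}}$ term; in the far region one gets a net weight $\lb\tau+\xi^2\rb^{-1}$, and since $-1 < -b$ this is dominated by the same norm when $s_0\le\frac12$. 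When $s_0>\frac12$ the factor $\lb\xi\rb^{s_0}$ in the $H^{s_0}$ norm is no longer controlled by $\lb\tau+\xi^2\rb^{1/2}\lb\xi\rb^{s_0-1}$-type trades in the far region, and one must keep the extra term $\bigl\|\int\lb\lambda+\xi^2\rb^{(2s_0-3)/4}|\widehat F(\xi,\lambda)|\,\d\xi\bigr\|_{L^2_\xi}$ exactly as stated — the exponent $(2s_0-3)/4$ is what is left over after one power of $\lb\tau+\xi^2\rb^{1/2}$ is used to cancel $\lb\xi\rb$ and the remaining $\lb\xi\rb^{s_0-1}$ is converted at the cost of the $(2s_0+1)/4$ time-derivative on the output. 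For the genuinely forced far-region piece, after an integration by parts in $\tau$ (or directly treating $\eta(t)\int e^{it\tau}m\widehat F\,\d\tau$ as a tempered function of $t$ whose $H^{(2s_0+1)/4}_t$ norm is controlled by $\|\lb\tau\rb^{(2s_0+1)/4}\,m\widehat F\|_{L^1_\tau L^2 \text{ or } L^2_\tau}$), the bound $|m|\lesssim\lb\tau+\xi^2\rb^{-1}$ together with the elementary inequality $\lb\tau\rb^{(2s_0+1)/4}\lesssim \lb\xi\rb^{s_0}\lb\tau+\xi^2\rb^{\text{small}}$ (when $s_0\le\frac12$) again closes into $\|F\|_{X^{s_0,-b}}$, while for $s_0>\frac12$ the same off-diagonal loss forces the correction term.

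I expect the main obstacle to be the bookkeeping in the region where $|\tau|\sim|\xi|^2$ (so $\tau+\xi^2$ and $\tau$ are genuinely independent scales): there the trade between the time-regularity exponent $\frac{2s_0+1}{4}$ on the left and the available weights $\lb\xi\rb^{s_0}$ and $\lb\tau+\xi^2\rb^{-b}$ on the right is tight, and it is precisely this tightness that dictates the exponent $\frac{2s_0-3}{4}$ and the appearance of the extra $L^2_\xi$ term for $s_0>\frac12$; getting those exponents to match requires care with the $C^0_x$ (uniformity in $x$) aspect, handled as in Lemma \ref{schr_ivp_cont} by noting the $x$-dependence enters only through the harmless unimodular factor $e^{ix\xi}$ and the near-region analytic-in-$x$ correction. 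Since the heavy lifting is done in \cite{ET,ET1}, in the write-up I would state the kernel decomposition, invoke Lemma \ref{schr_ivp_cont} for the homogeneous piece, and carry out only the two Cauchy–Schwarz estimates that produce the two cases, citing the rest.
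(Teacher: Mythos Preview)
The paper does not give its own proof of this proposition; it imports the result from \cite{ET1} and only remarks that the argument there extends to $s_0>-\tfrac12$. Your outline---write the Duhamel term via the kernel $(e^{it\tau}-e^{-it\xi^2})/(i(\tau+\xi^2))$, split into near/far regions with a Taylor expansion in the near region, treat the $e^{-it\xi^2}$ piece by Kato smoothing (Lemma~\ref{schr_ivp_cont}), and estimate the $e^{it\lambda}$ far piece directly in $H^{(2s_0+1)/4}_t$ via Cauchy--Schwarz---is exactly the decomposition of \cite[Proposition~3.8]{ET1}, and it is also the template the present paper follows verbatim in Section~\ref{wave_duhamel_est_prf} for the analogous Klein--Gordon Duhamel estimate (Proposition~\ref{wave_duhamel_est}). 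One small clarification: the correction term for $s_0>\tfrac12$ arises specifically from the $e^{it\lambda}$ far piece (your ``genuinely forced'' term II), not from the homogeneous $e^{-it\xi^2}$ piece, and the outer norm in the statement should read $L^2_\lambda$ rather than $L^2_\xi$ (a typo in the paper); otherwise your bookkeeping is on target.
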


This result appears for $s \geq 0$ in \cite{ET1}; the proof there applies to $s > -\frac12$ as well. The following proposition is used to control the correction term which appears on the right-hand side in the above estimate.

\begin{proposition}[\cite{ET1}]\label{schr_duhamel_corr} For any $s_0, s_1$ and any $a_0 \geq 0$ such that
$$ 
\frac12-s_0<a_0<\min\left(\frac12, s_1+\frac12,s_1-s_0+1\right),
$$ 
there exists $\epsilon>0$ such that for $\frac12-\epsilon<b <\frac12$, we have
\begin{equation*}
\Big\| \int_{\R } \lb \lambda+\xi^2 \rb^{\frac{2(s_0+a_0)-3}{4}}   | \hat{nu}(\xi,\lambda)| d\xi  \Big\|_{L^2_\xi} \lesssim \|u\|_{X^{s_0,b}}  \|n\|_{Y^{s_1,b}}.
\end{equation*}
\end{proposition}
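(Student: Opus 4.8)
The plan is to reduce the claimed bound to a bilinear $L^2$-convolution estimate in Fourier space, following the standard $X^{s,b}$ machinery. First I would use Cauchy–Schwarz in $\xi$, splitting off a factor of $\lb \xi \rb^{-1-}$, to write
\begin{align*}
\Big\| \int_{\R } \lb \lambda+\xi^2 \rb^{\frac{2(s_0+a_0)-3}{4}}   | \hat{nu}(\xi,\lambda)| \, d\xi \Big\|_{L^2_\lambda}^2
&\lesssim \Big\| \int_{\R } \lb \xi \rb^{1+} \lb \lambda+\xi^2 \rb^{\frac{2(s_0+a_0)-3}{2}}   | \hat{nu}(\xi,\lambda)|^2 \, d\xi \Big\|_{L^1_\lambda}.
\end{align*}
(Here one must check $a_0 < \frac12$ guarantees $\frac{2(s_0+a_0)-3}{4} < \frac{2s_0-2}{4} < 0$ so the weight is genuinely a gain, and the $\lb \xi\rb^{1+}$ loss is harmless after using Sobolev regularity to spare.) Writing out $\hat{nu}$ as the space-time convolution $\int \hat n(\xi_1,\lambda_1) \hat u(\xi-\xi_1, \lambda-\lambda_1)\, d\xi_1 d\lambda_1$, substituting $f(\xi,\tau) = \lb\xi\rb^{s_1}\lb\tau\mp\xi\rb^b |\hat n|$ and $g(\xi,\tau) = \lb\xi\rb^{s_0}\lb\tau-\xi^2\rb^b|\hat u|$ (both in $L^2$ with norms $\|n\|_{Y^{s_1,b}}$, $\|u\|_{X^{s_0,b}}$), the problem becomes the estimate
\[
\int \frac{\lb\xi\rb^{1+}\lb\lambda+\xi^2\rb^{\frac{2(s_0+a_0)-3}{2}} }{\lb\xi_1\rb^{2s_1}\lb\lambda_1\mp\xi_1\rb^{2b}\, \lb\xi-\xi_1\rb^{2s_0}\lb(\lambda-\lambda_1)-(\xi-\xi_1)^2\rb^{2b}} \, \cdots \lesssim \|f\|_{L^2}^2\|g\|_{L^2}^2,
\]
which by duality/Schur reduces to showing the supremum over $(\xi,\lambda)$ of the $(\xi_1,\lambda_1)$-integral of that kernel is finite.

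The key algebraic input is the resonance identity: on the support we have $\lambda_1 \mp \xi_1 \approx \sigma_1$, $(\lambda-\lambda_1)-(\xi-\xi_1)^2 \approx \sigma_2$, and the output modulation $\lambda + \xi^2$; adding these three (with the correct signs) the $\lambda$, $\lambda_1$ variables cancel and one is left with a polynomial in $\xi,\xi_1$ of the shape $\xi^2 - (\xi-\xi_1)^2 \pm \xi_1 = 2\xi\xi_1 - \xi_1^2 \pm\xi_1 = \xi_1(2\xi-\xi_1\pm1)$, so
\[
\max\bigl(\lb\lambda+\xi^2\rb, \lb\lambda_1\mp\xi_1\rb, \lb(\lambda-\lambda_1)-(\xi-\xi_1)^2\rb\bigr) \gtrsim \lb \xi_1(2\xi-\xi_1\pm1)\rb.
\]
Then I would do the usual case analysis by which of the three modulations is the largest. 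When the output modulation dominates, the negative power $\lb\lambda+\xi^2\rb^{\frac{2(s_0+a_0)-3}{2}}$ is already a gain and one estimates the $\lambda_1$-integral of $\lb\lambda_1\mp\xi_1\rb^{-2b}\lb(\lambda-\lambda_1)-(\xi-\xi_1)^2\rb^{-2b}$ by $\lesssim \lb\cdots\rb^{1-4b}$ (finite and small for $b$ close to $\frac12$), leaving a one-dimensional $\xi_1$-integral with weights $\lb\xi_1\rb^{-2s_1}\lb\xi-\xi_1\rb^{-2s_0}\lb\xi_1(2\xi-\xi_1\pm1)\rb^{\text{power}}$ whose convergence is exactly where the hypotheses $a_0 > \frac12 - s_0$, $a_0 < s_1 + \frac12$, and $a_0 < s_1 - s_0 + 1$ are consumed. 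When one of the input modulations dominates, I first perform the $\lambda_1$-integral using that the remaining two modulation weights have total exponent $-2b - |\text{negative output power}| < -1$ (or borrow from the dominant one), then again reduce to the $\xi_1$-integral and balance powers.

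The main obstacle I anticipate is the bookkeeping in the region where the spatial frequencies are comparable and large (the "high-high" interaction $|\xi_1| \sim |\xi - \xi_1| \gg 1$ with $|\xi|$ small, or the near-resonant set where $2\xi - \xi_1 \pm 1 \approx 0$): there the polynomial gain $\lb\xi_1(2\xi-\xi_1\pm1)\rb$ degenerates, one cannot extract enough modulation, and one must instead exploit the smallness of the measure of that set together with the $\lb\xi\rb^{1+}$ and the $\epsilon$-room in $b$. Getting the exponent arithmetic to close precisely at the stated open range for $a_0$ — in particular the lower bound $a_0 > \frac12 - s_0$, which is what lets the $L^2_\xi$ outer norm absorb the missing half-derivative — is the delicate point; everything else is routine $X^{s,b}$ calculus. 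I would also note that since the estimate is claimed only for $b$ in a one-sided neighborhood $(\frac12 - \epsilon, \frac12)$ of $\frac12$, with $\epsilon$ allowed to depend on how much room the strict inequalities on $a_0$ leave, there is no endpoint subtlety to worry about.
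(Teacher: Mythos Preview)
The paper does not actually supply its own proof of this proposition: it is stated with the citation \cite{ET1} and invoked as a black box. So there is no in-paper argument to compare your proposal against directly.

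That said, your outline is broadly the correct strategy and matches the template the paper uses for its companion bilinear estimates (Propositions~\ref{KG_bilinear} and~\ref{KG_bilinearI}): identify the resonance function $\xi_1(2\xi-\xi_1\pm 1)$, split into cases according to which of the three modulations dominates, integrate out $\lambda_1$ using the two small modulation weights, and then check a one-variable $\xi_1$-integral whose convergence encodes exactly the inequalities on $a_0$. Your identification of the high--high near-resonant region as the place where the arithmetic is tightest is also accurate.

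One remark on your opening move. Your first step applies Cauchy--Schwarz in $\xi$ against $\lb\xi\rb^{-1-}$, which converts the problem into bounding $\iint \lb\xi\rb^{1+}\lb\lambda+\xi^2\rb^{(2(s_0+a_0)-3)/2}|\widehat{nu}|^2$. This is workable but slightly awkward: expanding $|\widehat{nu}|^2$ gives a four-fold convolution, and the claimed ``$\lb\xi\rb^{1+}$ loss is harmless'' needs justification (in the high--high to low regime $|\xi|\ll|\xi_1|$ there is no spare Sobolev weight on the output frequency). The cleaner route, and the one the paper uses for its own bilinear estimates, is to dualize the $L^2_\lambda$ norm against a test function $h(\lambda)$ and then apply Cauchy--Schwarz in the convolution variables $(\xi_1,\lambda_1)$, which leads directly to a $\sup_{\xi,\lambda}$ of a two-variable kernel without the extra $\lb\xi\rb^{1+}$ artifact. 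This is a cosmetic difference rather than a gap; your case analysis from that point on is the right one.
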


For the global theory argument, we need the following estimate.
\begin{proposition} \label{T_power_est}
For $T \lesssim 1$ and $0 \leq b < \frac12$, we have 
\begin{equation*}
 \| \eta_T(t) W^t_0(0,q)\|_{X^{0,b}} \lesssim T^{1-2b} \| F\|_{X^{0,-b}}, \quad
\text{where}
\quad 
 q(t) = \Bigl[ \eta_T(t) \int_0^t e^{i(t-t')\Delta} F(t') \d t' \Bigr]_{x=0} .
\end{equation*}
\end{proposition}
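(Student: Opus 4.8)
The plan is to reduce this to two ingredients already available in the paper: the boundary-operator estimate of Lemma~\ref{schr_ibvp_xsb} (second statement) and the Duhamel-term estimate of Proposition~\ref{schr_duhamel_est}. By Lemma~\ref{schr_ibvp_xsb} applied with $s=0$ and any $b\leq\frac12$, we have
\[
\| \eta_T(t) W_0^t(0,q)\|_{X^{0,b}} \lesssim T^{1/2-|b|}\, \|\chi q\|_{H^{1/4}(\R)}.
\]
So it suffices to show that $\|\chi q\|_{H^{1/4}(\R)} \lesssim T^{1/2-b}\|F\|_{X^{0,-b}}$, since multiplying the two powers of $T$ gives $T^{1-2b}$ as claimed. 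Here $q$ is a function of $t$ alone, defined as the restriction to $x=0$ of $\eta_T(t)\int_0^t e^{i(t-t')\Delta}F(t')\,dt'$.

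The first main step is to bound $\|\chi q\|_{H^{1/4}(\R)}$ by the $H^{1/4}_t$ norm of $q$ on $\R^+$, or more simply by the full $H^{1/4}(\R)$ norm of $q$ itself (the factor $\eta_T$ makes $q$ compactly supported, so multiplication by $\chi$ is controlled once we know $q$ has no jump at the origin, or we can just use that $H^{1/4}$, being below $1/2$, tolerates truncation by $\chi$ via Lemma~\ref{char_func_lemma}). Then by Proposition~\ref{schr_duhamel_est} with $s_0=0$ (so that the regularity index is in the good range $-\frac12<s_0\leq\frac12$ and no correction term appears), we get
\[
\Big\| \eta(t)\int_0^t e^{i(t-t')\Delta}F\,dt'\Big\|_{C^0_x H^{1/4}_t} \lesssim \|F\|_{X^{0,-b}},
\]
and evaluating the continuous-in-$x$ function at $x=0$ gives $\|q\|_{H^{1/4}_t}\lesssim \|F\|_{X^{0,-b}}$. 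This handles the estimate with a loss of $T^0$ rather than $T^{1/2-b}$, which is not yet enough.

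The second main step — and the one I expect to be the real obstacle — is to recover the extra $T^{1/2-b}$ gain. There are two natural routes. One is to track the $\eta_T$ through the proof of Proposition~\ref{schr_duhamel_est}: since $\widehat{\eta_T}(\tau)=T\widehat\eta(\tau T)$, a change-of-variables argument (exactly the kind the authors invoke for the second half of Lemma~\ref{schr_ibvp_xsb}) should convert the cutoff at scale $T$ into a power of $T$; but the boundary-restriction operator does not commute cleanly with scaling in $t$, so this needs care. The cleaner route is to first apply \eqref{eq:xs3} to pull out a power of $T$ from $F$ before it enters the Duhamel operator: write $F=\eta_T F$ (legitimate on $[0,T]$, since in \eqref{eq:def_duhamel_compnts} $F$ already carries an $\eta_T$), then
\[
\|F\|_{X^{0,-b'}} \lesssim T^{b-b'}\|F\|_{X^{0,-b}}
\]
for $-\frac12<b'<b<\frac12$ — wait, this goes the wrong way. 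Instead one inserts the gain at the level of $q$: since $q(t)=\eta_T(t)[\cdots]$, and $q$ is supported in $|t|\lesssim T$, a function supported in an interval of length $T$ that lies in $H^{1/4}_t$ with a uniformly controlled norm picks up no automatic gain — so the gain genuinely must come from the cutoff structure inside the integral. The most robust approach is therefore to redo the $x=0$ trace estimate keeping $\eta_T$ explicit: decompose the multiplier $\lb\tau\rb^{1/4}$ into regions $|\tau|\lesssim 1/T^2$ and $|\tau|\gtrsim 1/T^2$ relative to the dispersion, and in each region exploit that $\widehat{\eta_T}$ concentrates at scale $1/T$; the low-frequency part yields the $T^{1/2-b}$ through an $L^1_\tau\hookrightarrow L^\infty$-type bound against $\lb\tau-\xi^2\rb^{-b}$ with $b<1/2$, and the high-frequency part is controlled by the rapid decay of $\widehat\eta$. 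Combining this refined trace estimate with the $T^{1/2-|b|}$ from Lemma~\ref{schr_ibvp_xsb} for the boundary operator $W_0^t(0,\cdot)$ then yields the stated $T^{1-2b}$.

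Throughout, one should double-check that all the $b$'s can be chosen in a single admissible window $(\tfrac12-\epsilon,\tfrac12)$: Proposition~\ref{schr_duhamel_est}, Lemma~\ref{schr_ibvp_xsb}, and estimate \eqref{eq:xs3} all require $b$ strictly below $\frac12$, which is exactly the regime of the statement, so there is no conflict. The technical heart is the $T$-power bookkeeping in the trace-at-$x=0$ estimate; everything else is assembly of quoted results.
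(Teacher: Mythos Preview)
Your proposal has a genuine gap at exactly the step you flag as ``the real obstacle'': extracting the second factor of $T^{1/2-b}$ from the trace $\|\chi q\|_{H^{1/4}}$. You consider the right mechanism --- a scaling/change-of-variables argument --- but then dismiss it on the grounds that ``the boundary-restriction operator does not commute cleanly with scaling in $t$.'' This is where you go astray: the Schr\"odinger evolution is invariant not under time scaling alone but under the \emph{parabolic} scaling $(x,t)\mapsto(\sqrt{T}\,x,\,Tt)$, and the trace at $x=0$ is perfectly compatible with this. Your fallback --- ``redo the $x=0$ trace estimate keeping $\eta_T$ explicit'' via a frequency decomposition into $|\tau|\lesssim 1/T^2$ and $|\tau|\gtrsim 1/T^2$ --- is a sketch, not a proof, and it is not clear it produces the sharp power $T^{1/2-b}$.

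The paper does not split the operator into a boundary-flow piece and a trace piece. Instead it writes out the full Fourier-side formula for $\eta_T(t)W_0^t(0,q)$ as an explicit integral operator on $\widehat{F}(\xi,\lambda)$ with output variables $(\beta,\omega)$ (using the explicit representation $W_0^t(0,q)=\frac1\pi(A+B)$ from \cite{bonaetal,ET}), and then rescales \emph{all four} variables simultaneously: $\sqrt{T}\,\beta\mapsto\beta$, $T\omega\mapsto\omega$, $\sqrt{T}\,\xi\mapsto\xi$, $T\lambda\mapsto\lambda$. After this change of variables the expression collapses to the unscaled ($T=1$) estimate, which is exactly the combination of Lemma~\ref{schr_ibvp_xsb}, Lemma~\ref{char_func_lemma}, and Proposition~\ref{schr_duhamel_est} that you invoke. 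The Jacobians of the four changes of variables, together with the inequality $\lb x/T\rb\lesssim \lb x\rb/T$ applied to the two weights $\lb\omega+\beta^2\rb^b$ and $\lb\lambda+\xi^2\rb^{-b}$, produce precisely $T^{1-2b}$. Your two-piece split could in principle be salvaged by running this same parabolic scaling on the trace piece alone, but that is the paper's argument in disguise; the frequency-decomposition route you propose in its place is not carried through.
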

This follows from results above together with a change of variables. The proof is in Section \ref{T_power_est_prf}. 

For the integral term in the wave equation, we have the following estimate, which is proved in Section \ref{wave_duhamel_est_prf}.  

\begin{proposition} \label{wave_duhamel_est}
 For any $b < \frac12$, we have 
\begin{multline*}
 \left\| \eta(t) \int_0^t e^{\pm i (t-t') D} G \d t' \right\|_{C^0_xH^{s_1}_t(\R \times \R)} \\
 \lesssim 
 \begin{cases} \| G\|_{Y^{s_1, -b}_\pm} + \left\| \lb \lambda \rb^{s_1} \int_{|\xi| \gg |\lambda|} \lb \lambda \mp \xi \rb^{-1} |\widehat{G}(\xi, \lambda)| \d \xi \right\|_{L^2_\lambda} &\text{for } -\frac12 < s_1 < 0, \\
  \| G \|_{Y^{s_1,-b}_\pm} &\text{for } 0 \leq s_1 \leq \frac12.
  \end{cases}
\end{multline*}
\end{proposition}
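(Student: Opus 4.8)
The plan is to mimic the proof of Proposition \ref{schr_duhamel_est} for the Schrödinger Duhamel term, adapting it to the wave propagator $e^{\pm itD}$ and the $C^0_x H^{s_1}_t$ norm. Fix the $+$ case (the $-$ case is identical). First I would expand the time integral in the usual way: writing $\widehat{G}(\xi,\lambda)$ for the space-time Fourier transform and using the identity $\int_0^t e^{i(t-t')D} G\,dt' = $ (a contribution at the resonant frequency $\lambda = \xi$) $+$ (a non-resonant piece where one divides by $\lambda\mp\xi$), one reduces the operator, after taking the spatial trace at a generic $x$ and the $H^{s_1}_t$ norm in $t$, to two model terms. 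Concretely, after the standard manipulation $\eta(t)\int_0^t e^{i(t-t')D}G\,dt'$ becomes, up to harmless smooth cutoffs, a superposition of $e^{itD}$ applied to a spatial function together with a term whose symbol carries the factor $\langle \lambda \mp \xi\rangle^{-1}$. The first of these is controlled by Lemma \ref{ivp_cont}, which gives exactly the $C^0_x H^{s_1}_t$ bound for $\eta(t)e^{\pm itD}g$ in terms of $\|g\|_{H^{s_1}}$; I would then bound that $H^{s_1}$ norm by $\|G\|_{Y^{s_1,-b}_+}$ using the dual pairing $\int \langle\lambda\mp\xi\rangle^{-1}\,d\lambda < \infty$ together with $b<\frac12$ (this is where the $b<\frac12$ restriction enters, through $\langle\lambda\mp\xi\rangle^{-1+b}\in L^2_\lambda$ up to an $\epsilon$).

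The main work is the non-resonant term. Here I would fix $x$, take the Fourier transform in $t$, and estimate $\big\|\langle\lambda\rangle^{s_1}\int \langle\lambda\mp\xi\rangle^{-1}|\widehat{G}(\xi,\lambda)|\,d\xi\big\|_{L^2_\lambda}$, then bound this in two regimes. In the region $|\xi|\lesssim|\lambda|$ one has $\langle\lambda\rangle^{s_1}\lesssim\langle\xi\rangle^{s_1}\langle\lambda\mp\xi\rangle^{|s_1|}$ (roughly), so a Cauchy–Schwarz in $\xi$ against $\langle\lambda\mp\xi\rangle^{-1+|s_1|-b}$, which is in $L^2_\lambda$ for $|s_1|<\frac12$ and $b<\frac12$, produces $\|G\|_{Y^{s_1,-b}_+}$; the $\langle\xi\rangle^{s_1}$ can be negative, which is fine since $s_1 \geq -\frac12$ and we only need an $L^2_\xi$ bound. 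In the complementary region $|\xi|\gg|\lambda|$, one cannot trade $\langle\lambda\rangle^{s_1}$ for $\langle\xi\rangle^{s_1}$ when $s_1<0$, so this piece genuinely produces the correction term in the statement; when $0\le s_1\le\frac12$ instead $\langle\lambda\rangle^{s_1}\le\langle\xi\rangle^{s_1}\langle\lambda\mp\xi\rangle^{s_1}$ always holds (since $|\xi|\gg|\lambda|$ forces $\langle\lambda\mp\xi\rangle\approx\langle\xi\rangle$), so the correction term is absorbed and only $\|G\|_{Y^{s_1,-b}_+}$ remains — this is exactly the dichotomy recorded in the proposition.

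The hard part, as usual for these half-line arguments, is handling the $C^0_x$ continuity in $x$ uniformly and making the boundary trace manipulations rigorous for rough $G$ rather than only for Schwartz data — one wants the bound in the Fourier-restriction norm $Y^{s_1,-b}_+$, whose elements are merely tempered distributions, so the trace at $x=$ const and the interchange of $\int_0^t$ with the Fourier inversion must be justified by a density argument, first proving the estimate for smooth compactly supported $G$ and then extending. A secondary technical point is the precise splitting of $e^{itD}$ into its "low frequency" (the $\mu\in(-1,1)$, hyperbolic) and "high frequency" parts analogous to the $A$, $B$ decomposition in Lemma \ref{AB}; I expect the low-frequency part to be smooth and easily absorbed, while the high-frequency part is where the symbol $\sgn(\xi)\sqrt{1+\xi^2}\approx|\xi|$ behaves like a transport operator and the above Cauchy–Schwarz estimates apply cleanly. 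Apart from these points the argument is a routine, if lengthy, multilinear $L^2$ computation, which is why the detailed version is deferred to Section \ref{wave_duhamel_est_prf}.
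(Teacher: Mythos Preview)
Your outline is essentially correct and matches the paper's approach: evaluate at a fixed $x$ (the paper takes $x=0$ by translation invariance of the $H^{s_1}_t$ norm), decompose the Duhamel integral, and for the piece carrying $\lb\lambda\mp\xi\rb^{-1}$ split into $|\xi|\lesssim|\lambda|$ (absorbed into $\|G\|_{Y^{s_1,-b}_\pm}$ via Cauchy--Schwarz) versus $|\xi|\gg|\lambda|$ (the correction term when $s_1<0$, absorbed when $s_1\ge 0$). Two corrections are in order.

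First, the paper's decomposition has \emph{three} pieces, not two: in addition to the $e^{itD}$-applied-to-a-spatial-function term (the paper's $\III$) and the $\lb\lambda\mp\xi\rb^{-1}$ term (the paper's $\II$), there is a near-resonant term $\I$ supported where $|\lambda-\sgn(\xi)\lb\xi\rb|\lesssim 1$, handled by Taylor-expanding the numerator $e^{it\lambda}-e^{it\sgn(\xi)\lb\xi\rb}$ and summing the resulting series against $\|G\|_{Y^{s_1,-b}_+}$. This is a separate (though standard) step, not merely a ``harmless smooth cutoff''. Second, your remark about an $A$, $B$ low/high-frequency splitting is a red herring: that decomposition belongs to the boundary operator $V_0^t$ (Lemma~\ref{AB}) and plays no role in this proof; the paper simply uses $\lb\lambda-\sgn(\xi)\lb\xi\rb\rb\approx\lb\lambda\mp\xi\rb$ and the rapid decay of $\hat\eta$ throughout. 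Finally, the ``hard part'' you anticipate (density arguments, uniform-in-$x$ continuity) is dispatched by the paper in one line via translation invariance, so no separate boundary-trace machinery is needed here.
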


To control the correction term for negative $s_1$, we need the estimate below. The proof is in Section \ref{prop:smooth23_prf}. 

\begin{proposition}\label{prop:smooth23}
 For $s_0 > \min\{ -\frac12, \frac12 - 2b\}$ with $-\frac12 < s_1 + a_1 < 0$, we have 
\[  \left\| \lb \lambda \rb ^{s_1 +a_1} \int_{|\xi| \gg |\lambda|} \frac{|\mathcal{F}\bigl(D^{-1}(u\overline{v})\bigr)(\xi ,\lambda)|}{\lb \lambda \mp \xi \rb} \d \xi \right\|_{L^2_\lambda} \lesssim \|u\|_{X^{s_0,b}}\|v\|_{X^{s_0,b}}. \]

\end{proposition}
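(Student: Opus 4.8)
## Proof Proposal for Proposition~\ref{prop:smooth23}

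The plan is to reduce the bound to a purely multilinear $L^2$-convolution estimate by dualizing the outer $L^2_\lambda$ norm and expanding the Fourier transforms of $u$ and $\overline v$. Writing $\mathcal{F}(D^{-1}(u\overline v))(\xi,\lambda) = \sgn(\xi)\lb\xi\rb^{-1}\int \widehat u(\xi_1,\lambda_1)\overline{\widehat v}(\xi_2,\lambda_2)$ with the convolution constraints $\xi=\xi_1+\xi_2$, $\lambda=\lambda_1+\lambda_2$, I would introduce dual variables $f(\xi_1,\lambda_1) = \lb\xi_1\rb^{s_0}\lb\lambda_1-\xi_1^2\rb^b|\widehat u|$ and $g(\xi_2,\lambda_2) = \lb\xi_2\rb^{s_0}\lb\lambda_2-\xi_2^2\rb^b|\widehat v|$, together with a dual function $w(\lambda)\in L^2_\lambda$, and aim to show
\[
\int \frac{\lb\lambda\rb^{s_1+a_1}}{\lb\xi\rb\,\lb\lambda\mp\xi\rb}\cdot\frac{|w(\lambda)|\,f(\xi_1,\lambda_1)\,g(\xi_2,\lambda_2)}{\lb\xi_1\rb^{s_0}\lb\xi_2\rb^{s_0}\lb\lambda_1-\xi_1^2\rb^b\lb\lambda_2-\xi_2^2\rb^b}\,\lesssim\,\|w\|_{L^2}\|f\|_{L^2}\|g\|_{L^2},
\]
with integration over the constraint set and, crucially, over the region $|\xi|\gg|\lambda|$. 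The strategy is the standard one: use the two modulation weights $\lb\lambda_i-\xi_i^2\rb^{-b}$ to perform the $\lambda_i$-integrations after applying Cauchy--Schwarz, thereby reducing to an algebraic weight estimate in the spatial frequencies $\xi_1,\xi_2$ alone, which is then handled by the resonance structure.

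The key steps, in order: \emph{(1)} Dualize and expand as above. \emph{(2)} In the region $|\xi|\gg|\lambda|$, exploit that $\xi=\xi_1+\xi_2$ is large while $\lambda=\lambda_1+\lambda_2$ is comparatively small; this forces $\xi_1,\xi_2$ to be large and nearly opposite, so $\lb\xi_1\rb\approx\lb\xi_2\rb\gtrsim\lb\xi\rb$, which is where we gain room from the $\lb\xi_1\rb^{-s_0}\lb\xi_2\rb^{-s_0}$ factors and the hypothesis on $s_0$. \emph{(3)} Apply Cauchy--Schwarz in $(\xi_1,\xi_2,\lambda_1,\lambda_2)$ (or a Fubini-type splitting), pulling $\|f\|_{L^2}\|g\|_{L^2}$ out, leaving a supremum over the output variables of an integral of the squared algebraic weight against $\lb\lambda_1-\xi_1^2\rb^{-2b}\lb\lambda_2-\xi_2^2\rb^{-2b}$. \emph{(4)} Carry out the $\lambda_1$-integration using $2b<1$ is \emph{false} here in general—so instead I would first integrate in one modulation variable with the constraint $\lambda_1+\lambda_2=\lambda$ fixed, using the elementary bound $\int \lb a-y\rb^{-2b}\lb b+y\rb^{-2b}\,dy\lesssim \lb a+b\rb^{-2b}$ valid for $b>\frac14$ (which holds since $b$ is close to $\frac12$), reducing to a single spatial integral. \emph{(5)} Perform the remaining $\xi_1$-integration: here the factor $\lb\lambda\mp\xi\rb^{-1}$ combined with the dispersion relation $\lambda_1-\xi_1^2+\lambda_2-\xi_2^2 = \lambda - \xi_1^2-\xi_2^2$ and the near-cancellation $\xi_1\approx-\xi_2$ produces enough decay; one tracks the total power of $\lb\xi_1\rb$ and checks it exceeds $1$ given $-\frac12<s_1+a_1<0$, $s_0>\frac12-2b$, and $|\xi|\gg|\lambda|$.

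The main obstacle I expect is Step~(5): controlling the $\xi_1$-integral near the resonance $\xi_1\approx -\xi_2$ (equivalently $\xi$ small relative to $\xi_1$), where the weight $\lb\lambda\mp\xi\rb^{-1}$ is only logarithmically integrable and the dispersion relation degenerates. The saving feature is precisely the constraint $|\xi|\gg|\lambda|$, which keeps $\xi$ away from $0$ in the relevant regime and forces $|\xi_1^2+\xi_2^2 - \lambda|\gtrsim |\xi_1|^2 \gg |\xi|$, so that at least one of the two modulations $\lb\lambda_i-\xi_i^2\rb$ is large; a case analysis on which modulation dominates (high modulation of $u$, high modulation of $v$, or high output weight $\lb\lambda\rb$—though the last is suppressed by $|\xi|\gg|\lambda|$) closes the estimate. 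The condition $s_0>\frac12-2b$ is exactly what is needed so that, after spending part of the large modulation to beat the $\lb\xi_1\rb$ powers, a positive power of $\lb\xi_1\rb$ remains to make the final integral converge.
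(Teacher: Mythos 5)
Your overall plan---dualize, expand the convolution, and run a Bourgain-style modulation/resonance analysis---is a much heavier machine than this lemma needs, and step (2) contains a genuine error. The constraint $|\xi|\gg|\lambda|$ on the output variables $\xi=\xi_1+\xi_2$, $\lambda=\lambda_1+\lambda_2$ does \emph{not} force $\xi_1,\xi_2$ to be large and nearly opposite; ``nearly opposite'' ($\xi_1\approx-\xi_2$) would mean $\xi$ is \emph{small}, the opposite of what you assert, and the $\lambda$-constraint says nothing about the spatial frequency balance. All $|\xi|$ large gives you is $\max(|\xi_1|,|\xi_2|)\gtrsim|\xi|$. This misreading persists into step (5), where you examine only the subregime $|\xi_1|\gg|\xi|$, which is neither exhaustive nor where any real obstruction lives.

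More importantly, the resonance apparatus is entirely unnecessary here, and the decisive simplification is what the proposal misses: on the domain $|\xi|\gg|\lambda|$ one has $\lb\lambda\mp\xi\rb\approx\lb\xi\rb$, so the combined weight satisfies $\lb\lambda\mp\xi\rb^{-1}\lb\xi\rb^{-1}\approx\lb\xi\rb^{-2}\lesssim\lb\lambda\rb^{-2}$, and thus the full output weight is controlled by $\lb\lambda\rb^{s_1+a_1-2}$, which lies in $L^2_\lambda$ since $s_1+a_1<0$. Once that is observed, no dispersion relation is invoked at all. The paper bounds the $\xi$-integral of the convolution by $\|\hat u\|_{L^1_\xi}\ast_\lambda\|\hat v\|_{L^1_\xi}$ via Young, trades the strongly decaying $\lambda$-weight for an $L^\infty_\lambda$ norm via Cauchy--Schwarz, applies Young again to land on $\|\hat u\|_{L^2_\lambda L^1_\xi}\|\hat v\|_{L^2_\lambda L^1_\xi}$, and finishes by Cauchy--Schwarz in $\xi$---this last step is precisely where the hypotheses $s_0>-\tfrac12$ and $s_0+2b>\tfrac12$ enter, to make $\sup_\lambda\int\lb\xi\rb^{-2s_0}\lb\lambda-\xi^2\rb^{-2b}\,d\xi$ finite after the substitution $\rho=\xi^2$. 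As a secondary remark, the convolution bound quoted in your step (4) should read $\lesssim\lb a+b\rb^{1-4b}$ for $b>\tfrac14$, not $\lb a+b\rb^{-2b}$, though this is minor next to the structural issue above.
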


We finish this section by presenting the proofs of the main nonlinear estimates for the KGS system.
\begin{proposition} \label{KG_bilinear} For any $\frac{s_0}{2}+\frac14<b<\frac12$, any $-\frac{1}{4} < s_0 <\frac12$, $s_1 > -\frac12$, and any $$a<2s_{0}-s_{1}+2b-\frac{1}{2}$$
we have
$$\|u\overline{v}\|_{Y^{s_{1}+a,-b}}\lesssim \|u\|_{X^{s_{0},b}}\|v\|_{X^{s_{0},b}}.$$
\end{proposition}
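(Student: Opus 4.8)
The plan is to estimate $\|u\overline v\|_{Y^{s_1+a,-b}}$ by duality: writing $w$ for the dual variable, we need to bound
\[
\int \frac{\lb \xi \rb^{s_1+a}}{\lb \tau \mp \xi\rb^{b}}\,|\hat u *\widehat{\overline v}|(\xi,\tau)\,\overline{\hat w(\xi,\tau)}\,\d\xi\,\d\tau,
\]
which after unfolding the convolution and using that $\widehat{\overline v}(\xi,\tau)=\overline{\hat v(-\xi,-\tau)}$ becomes a trilinear integral over the frequencies $(\xi_1,\tau_1)$ for $u$, $(\xi_2,\tau_2)$ for $v$, with output $(\xi,\tau)=(\xi_1-\xi_2,\tau_1-\tau_2)$. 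The standard move is to put everything in $L^2$: set $f(\xi_1,\tau_1)=\lb\xi_1\rb^{s_0}\lb\tau_1-\xi_1^2\rb^{b}\hat u$, $g(\xi_2,\tau_2)=\lb\xi_2\rb^{s_0}\lb\tau_2-\xi_2^2\rb^{b}\hat v$, both in $L^2$ with norms $\|u\|_{X^{s_0,b}}$, $\|v\|_{X^{s_0,b}}$, and a third $L^2$ function $h$ of norm one, and reduce to showing that the kernel obtained by collecting all the weights,
\[
K=\frac{\lb \xi_1-\xi_2\rb^{s_1+a}}{\lb(\tau_1-\tau_2)\mp(\xi_1-\xi_2)\rb^{b}\,\lb\xi_1\rb^{s_0}\lb\xi_2\rb^{s_0}\,\lb\tau_1-\xi_1^2\rb^{b}\lb\tau_2-\xi_2^2\rb^{b}},
\]
gives a bounded trilinear form. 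Equivalently, by Cauchy--Schwarz in the appropriate variables, it suffices to bound a weighted $L^2$-convolution quantity of the form $\sup \lb\xi\rb^{2(s_1+a)}\lb\tau\mp\xi\rb^{-2b}\int \lb\xi_1\rb^{-2s_0}\lb\xi_2\rb^{-2s_0}\lb\tau_1-\xi_1^2\rb^{-2b}\lb\tau_2-\xi_2^2\rb^{-2b}$ over the constraint set, or to use a known Schur-type / $[k;Z]$-multiplier bound in the spirit of \cite{GTV} and \cite{Pecher}.

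The heart of the matter is the resonance analysis. With $u$ and $\overline v$ both carrying Schrödinger-type modulation weights $\lb\tau_j-\xi_j^2\rb^{b}$ and the output carrying a wave weight $\lb\tau\mp\xi\rb^{-b}$, the relevant algebraic identity is
\[
(\tau_1-\xi_1^2)-(\tau_2-\xi_2^2)-\big((\tau_1-\tau_2)\mp(\xi_1-\xi_2)\big)=-\xi_1^2+\xi_2^2\pm(\xi_1-\xi_2)=\mp(\xi_1-\xi_2)\big(\pm(\xi_1+\xi_2)-1\big)\mp\ \cdots,
\]
so that the largest of the three modulations $\lb\tau_1-\xi_1^2\rb$, $\lb\tau_2-\xi_2^2\rb$, $\lb\tau\mp\xi\rb$ is $\gtrsim |\xi_1-\xi_2|\,|\xi_1+\xi_2\mp 1|$ (up to the bounded correction). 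I would split into the regions where the output frequency $|\xi|=|\xi_1-\xi_2|$ is, respectively, $\ll$, $\approx$, or $\gg$ $\max(|\xi_1|,|\xi_2|)$, and in each region distribute this modulation gain among the three weights to recover enough powers of $|\xi|$ to beat $\lb\xi\rb^{s_1+a}$. The high-output, high$\times$high region (where $\xi_1\approx\xi_2$ have the same sign, so $|\xi_1+\xi_2|$ is large and the null form gives a genuine gain) is where the condition $a<2s_0-s_1+2b-\frac12$ is forced: one gains roughly $2s_0$ from the two $\lb\xi_j\rb^{-2s_0}$ weights (in absolute value, since $s_0<0$ this is a loss the output frequency must absorb) and $\min(2b,\text{modulation exponent})$ worth of the modulation after Cauchy--Schwarz, and the bookkeeping ends up needing $s_1+a-s_0\cdot 2 < 2b-\tfrac12$ in the worst case. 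The auxiliary condition $b>\frac{s_0}{2}+\frac14$, i.e. $2b-\tfrac12>s_0$, is exactly what guarantees the exponent $\frac{2s_0-3}{4}+\cdots$ type quantities appearing after integrating out a modulation variable are summable/convergent, and it also ensures $X^{s_0,b}\hookrightarrow L^4_{t,x}$-type Strichartz estimates are available as a black box if one prefers a Strichartz-based argument over the pure convolution one.

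The main obstacle I anticipate is the high$\times$high-to-high interaction together with the two negative Sobolev exponents $s_0<0$ on the inputs: there the output frequency $\lb\xi\rb^{s_1+a}$ is not damped and the input weights $\lb\xi_j\rb^{-2s_0}$ are large rather than small, so all the gain must come from modulation, and one must be careful that after using Cauchy--Schwarz to integrate out the two "half" modulation variables one is left with a convergent integral in the remaining variable — this is exactly where the strict inequality $a<2s_0-s_1+2b-\tfrac12$ (rather than $\le$) and the lower bound $b>\tfrac{s_0}{2}+\tfrac14$ enter, and it is the step that requires the most careful case analysis (in particular separating $|\xi_1+\xi_2|\gtrsim 1$ from $|\xi_1+\xi_2|\ll 1$, where the null form degenerates but then $|\xi_1|\approx|\xi_2|$ are comparable and one can instead exploit that the output is low). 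The remaining regions (low output, or high$\times$low) are easier: there either $|\xi|\lesssim \min$ so $\lb\xi\rb^{s_1+a}$ is harmless, or the output frequency is comparable to one input and the other input frequency is bounded, reducing essentially to a one-dimensional bilinear estimate that the condition on $a$ comfortably covers. I would present the hard region in full and indicate the others briefly.
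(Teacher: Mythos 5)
Your overall scaffolding is the same as the paper's: a duality reduction to a trilinear integral over $L^2$ functions, weighted Cauchy--Schwarz, and a case analysis driven by the resonance identity
\[
(\tau_1-\xi_1^2)-(\tau_2-\xi_2^2)-\bigl((\tau_1-\tau_2)\mp(\xi_1-\xi_2)\bigr)= -(\xi_1-\xi_2)\bigl((\xi_1+\xi_2)\mp 1\bigr),
\]
so $\max_j |\lambda_j| \gtrsim |\xi_1-\xi_2|\,|\xi_1+\xi_2\mp 1|$. Up to here you match the paper precisely (the paper writes this as $\xi(\xi-2\xi_1)$ after a half-integer frequency shift). However, there is a genuine conceptual error in the way you locate and propose to handle the degenerate region, and it is precisely the region that produces the sharp constraint.

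You describe the hard region as the ``high-output, high$\times$high region (where $\xi_1\approx\xi_2$ have the same sign, so $|\xi_1+\xi_2|$ is large and the null form gives a genuine gain),'' and for the degenerate regime $|\xi_1+\xi_2|\ll 1$ you propose to ``exploit that the output is low.'' Both statements are reversed for the conjugated product $u\overline v$. Since the output frequency is $\xi=\xi_1-\xi_2$, having $\xi_1\approx\xi_2$ with the same sign means $\xi$ is \emph{small}, not large; and the genuinely degenerate region, where $|\xi_1+\xi_2\mp1|\lesssim 1$ so the null factor vanishes, is $\xi_1\approx-\xi_2$ with both large --- there the output $\xi=\xi_1-\xi_2\approx 2\xi_1$ is \emph{large}, not small. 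You appear to have reasoned as if the product were $uv$ (where $\xi=\xi_1+\xi_2$) rather than $u\overline{v}$. Consequently the escape route you propose (``the output is low so $\lb\xi\rb^{s_1+a}$ is harmless'') simply fails in the region where the estimate is actually sharp; there is no modulation gain available and $\lb\xi\rb^{s_1+a}$ is at its largest.

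In the paper this is exactly the resonant case $\mathbf{R1}$: $|\xi-2\xi_1|\lesssim 1$ (in your notation $|\xi_1+\xi_2\mp1|\lesssim 1$), where all three frequencies are comparable and large. The way it is actually controlled is that, after Cauchy--Schwarz and integrating the short modulation variable, one is left with
\[
\sup_{\xi}\langle\xi\rangle^{2(s_1+a)-4s_0}\int_{|\xi-2\xi_1|\lesssim 1}\frac{d\xi_1}{\langle\xi(\xi-2\xi_1)\rangle^{4b-1}}\lesssim\sup_{\xi}\langle\xi\rangle^{2s_1+2a-4s_0-4b+1}\int_{|\xi-2\xi_1|\lesssim 1}\frac{d\xi_1}{|\xi-2\xi_1|^{4b-1}},
\]
which is finite because $4b-1<1$ and then bounded uniformly in $\xi$ precisely when $a<2s_0-s_1+2b-\frac12$. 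So the numerology you ``bookkept'' to arrive at the stated inequality is correct, but your geometric reasoning for why that is the worst case and how to close it there is not, and a proof built on ``output is low in the degenerate region'' would not go through. Please redo the frequency-localization discussion with the correct dictionary $\xi=\xi_1-\xi_2$ and treat the $|\xi_1+\xi_2\mp1|\lesssim 1$ case as the paper does.
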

\begin{proof}
By duality and after renaming the $L^2$ based functions it is enough to consider the estimate
\begin{equation}\label{fnonlinear}
\iiiint\frac{f(\xi_1,\tau_1)g(\xi-\xi_1,\tau-\tau_1)h(\xi,\tau)\langle \xi \rangle^{s_1+a}d\xi_{1}d\tau_{1}d\xi d\tau}{{\langle \xi_1 \rangle}^{s_{0}}{\langle \xi-\xi_{1} \rangle}^{s_{0}}{\langle \tau-\xi \rangle}^{b}{\langle \tau_1+\xi_{1}^2\rangle}^{b}{\langle \tau-\tau_1-(\xi-\xi_1)^2\rangle}^{b}}\lesssim \|f\|_{L^2}\|g\|_{L^2}\|h\|_{L^2}.
\end{equation}
By applying the Cauchy Schwarz inequality first in the $\xi,\tau$ variables and then in the $\xi_1,\tau_1$ variables and integrating the $\tau_1$ integral it is enough to bound the integral
$$\sup_{\xi,\tau}\int\frac{\langle \xi \rangle^{2(s_1+a)}d\xi_{1}}{{\langle \xi_1 \rangle}^{2s_{0}}{\langle \xi-\xi_{1} \rangle}^{2s_{0}}{\langle \tau-\xi \rangle}^{2b}{\langle \tau-\xi^2+2\xi \xi_1\rangle}^{4b-1}}.$$
Since $4b-1<2b$ this reduces to
\begin{equation}\label{red1}
\sup_{\xi}\int\frac{\langle \xi \rangle^{2(s_1+a)}d\xi_{1}}{{\langle \xi_1 \rangle}^{2s_{0}}{\langle \xi-\xi_{1} \rangle}^{2s_{0}}{\langle \xi(\xi-2\xi_1-1) \rangle}^{4b-1}}.
\end{equation}
Notice that for $|\xi_1|\lesssim 1$ the integral becomes
$$\sup_{\xi}{\langle \xi \rangle}^{2s_1-2s_0+2a}{\langle \xi^2 \rangle}^{1-4b}\int_{|\xi_1|\lesssim 1}d\xi_1$$
which is finite as long as 
$$a<s_0-s_1+4b-1.$$ Thus from now on we assume that $|\xi_1|\gg 1$.
We now denote
$$\lambda_1=\tau-\xi$$
$$\lambda_2=\tau_1+\xi_{1}^2$$
$$\lambda_3=\tau-\tau_{1}-(\xi-\xi_{1})^2$$
and we notice that $\lambda_1-\lambda_3-\lambda_2=\xi(\xi-2\xi_1-1)$ while if we change variables $\xi_1\rightarrow \xi_{1}-\frac{1}{2}$ inside the integral the identity becomes
$$\lambda_1-\lambda_3-\lambda_2=\xi(\xi-2\xi_1).$$ 
We first consider the resonant cases R1 and R2.
\\
{\bf R1:} $|\xi-2\xi_1|\lesssim 1$. In this case $\langle \xi \rangle \sim \langle \xi_1 \rangle$ and since $\xi-\xi_1=\xi-2\xi_1-\xi_1$ we also have that
$$\langle \xi-\xi_1\rangle \sim\langle \xi \rangle \sim \langle \xi_1 \rangle.$$
Then we have that 
\begin{multline*}
\eqref{red1}\sim \sup_{\xi}\int\frac{\langle \xi \rangle^{2(s_1+a)}d\xi_{1}}{{\langle \xi_1 \rangle}^{2s_{0}}{\langle \xi-\xi_{1} \rangle}^{2s_{0}}{\langle \xi(\xi-2\xi_1) \rangle}^{4b-1}}\\ \lesssim \sup_{\xi}{\langle \xi \rangle}^{2s_1+2a-4s_0-4b+1}\int_{|\xi-2\xi_1|\lesssim 1}\frac{1}{{|\xi-2\xi_1|}^{4b-1}}d\xi_1\lesssim \sup_{\xi}{\langle \xi \rangle}^{2s_1+2a-4s_0-4b+1}\lesssim 1
\end{multline*}
as long as 
$$a<2s_0-s_1+2b-\frac{1}{2}.$$
Notice that to integrate we used the fact that $4b-1<1$ along with the inequality $$\frac{1}{\langle \xi(\xi-2\xi_1)\rangle} \lesssim \frac{1}{\langle \xi \rangle |\xi-2\xi_1|}$$ which is justified for large $|\xi - 2\xi_1|\lesssim 1$.
\\
{\bf R2:} $|\xi|\lesssim 1$. In this case we first apply the Cauchy Schwarz inequality to the $\xi_1,\tau_1$ variables and then to the $\xi,\tau$ variables and thus it is enough to bound
$$\sup_{\xi_1}\int\frac{\langle \xi \rangle^{2(s_1+a)}d\xi}{{\langle \xi_1 \rangle}^{2s_{0}}{\langle \xi-\xi_{1} \rangle}^{2s_{0}}{\langle \xi(\xi-2\xi_1) \rangle}^{4b-1}}.$$
but this integral is majorized by $\sup_{\xi_1}{\langle \xi_1 \rangle }^{-4s_0-4b+1}\int_{|\xi|\lesssim 1}\frac{1}{|\xi|^{4b-1}}d\xi$ which is finite as long as $b<\frac12$.
\\
\\
We now consider the nonresonant case $|\xi|\gg 1$ and $|\xi-2\xi_1|\gg 1$. In this case we have that $\max_{i=1,2,3}|\lambda_{i}| \gtrsim \langle \xi \rangle \langle \xi-2\xi_1 \rangle $. We consider the cases A and B. Case A is when $\lambda_1$ is the maximum and case B is the case when $\lambda_2$ is the maximum. The third case, where $| \lambda_3|$ is the maximum, is similar to case B and will be omitted.
\\
\\
{\bf Case A:} $|\lambda_1|=\max_{i=1,2,3}|\lambda_{i}| \gtrsim \langle \xi \rangle \langle \xi-2\xi_1 \rangle $. We have three subcases.
\\
\\
Case 1: $|\xi_1|\gg |\xi|$. In this case we have that $|\lambda_1|\gtrsim \langle \xi \rangle \langle \xi_1 \rangle$ and that $\langle \xi_1 \rangle \sim \langle \xi-\xi_1 \rangle$. By applying Cauchy Scwharz as before in estimate \eqref{fnonlinear} it is enough to bound
$$\sup_{\xi,\lambda_1}\langle \lambda_1\rangle^{-2\epsilon}\iint\frac{\langle \xi \rangle^{2s_1+2a-2b+2\epsilon}d\xi_1 d\lambda_2}{{\langle \xi_1 \rangle}^{4s_{0}+2b-2\epsilon}{\langle \lambda_2 \rangle}^{2b}{\langle \lambda_3 \rangle}^{2b}}.$$
First observe that $4s_0+2b-2\epsilon>0$ and thus 
$${\langle \xi \rangle }^{4s_0+2b-2\epsilon} \lesssim {\langle \xi_1 \rangle }^{4s_0+2b-2\epsilon}.$$
We now change variables from $\xi_1$ to $\lambda_3$ (for fixed $\xi,\lambda_1,\lambda_2$) using $\lambda_1-\lambda_3-\lambda_2=\xi^2-2\xi\xi_1$ to obtain $d\lambda_3=2|\xi|d\xi_1$. Thus the integral is majorized by
$$\sup_{\xi,\lambda_1}\langle \xi \rangle^{2s_1+2a-4b-4s_0-1+4\epsilon}\langle \lambda_1\rangle^{-2\epsilon}\iint_{|\lambda_2|,|\lambda_3|\lesssim |\lambda_1|}\frac{d\lambda_2 d\lambda_3}{\langle \lambda_2\rangle^{2b} \langle \lambda_3\rangle^{2b}} \lesssim 1$$
for 
$$a<2s_0-s_1+2b+\frac{1}{2}.$$
\\
\\
Case 2: $|\xi|\gg |\xi_1|$. In this case $|\lambda_1|\gtrsim \langle \xi \rangle^2 $ and $\langle \xi \rangle \sim \langle \xi-\xi_1 \rangle$. We thus have to bound
$$\sup_{\xi,\lambda_1}\langle \lambda_1\rangle^{-2\epsilon}\iint_{|\xi_1|\lesssim |\xi|}\frac{\langle \xi \rangle^{2s_1+2a-2s_0-4b+4\epsilon}d\xi_1 d\lambda_2}{{\langle \xi_1 \rangle}^{2s_{0}}{\langle \lambda_2 \rangle}^{2b}{\langle \lambda_3 \rangle}^{2b}}.$$
In the case that $s_0\leq 0$, we dismiss $\lambda_3$ and use the fact that $\langle \xi_1\rangle^{-2s_0}\lesssim \langle \xi\rangle^{-2s_0}$ and $$\int_{|\lambda_2|\lesssim |\lambda_1|}\frac{d\lambda_2}{\langle \lambda_2 \rangle^{2b}} \lesssim |\lambda_1|^{1-2b}.$$ Thus the integral is finite as long as $b=\frac12-\epsilon$ and $a<2s_0-s_1+2b-\frac12$.
\\
In the case that $0<s_0\leq\frac12$ we integrate $$\int_{|\xi_1| \lesssim |\xi|}\frac{1}{\lb \xi_1\rb^{2s_0}}d\xi_1\lesssim |\xi|^{1-2s_0+\epsilon},$$ we dismiss $\lambda_3$, and integrating in $\lambda_2$ as above we have a finite integral as long as $b=\frac12-\epsilon$ and $a<2s_0-s_1+2b-\frac12$.
\\
Case 3: $|\xi|\sim |\xi_1|$.
\\
Case 3a: $|\xi-2\xi_1| \gtrsim |\xi|$. 

In this subcase $|\lambda_1|\gtrsim \langle\xi\rangle^2\sim \langle\xi_1\rangle^2$ since $\langle\xi\rangle \sim \langle \xi_1 \rangle$. In the case that $s_{0}\leq 0$ we can easily bound
$$\langle\xi-\xi_1\rangle^{-2s_0}\lesssim \langle\xi\rangle^{-2s_0}.$$
Then by Cauchy-Schwarz inequality and dismissing the $\lambda_3$ weight we need to bound
$$\sup_{\xi,\lambda_{1}}\langle \xi \rangle^{2s_1+2a-4s_{0}-4b+4\epsilon}\langle \lambda_1\rangle^{-2\epsilon}\iint_{|\xi_1|\sim |\xi|}\frac{d\xi_1d\lambda_2}{\langle \lambda_2\rangle^{2b}}\lesssim \sup_{\xi,\lambda_{1}}\langle \xi \rangle^{2s_1+2a-4s_{0}-4b+4\epsilon+1}\langle \lambda_1\rangle^{-2\epsilon+1-2b}.$$
This is finite for any $b$ close to $\frac{1}{2}-$ and any $a<2s_0-s_1+2b-\frac12$. In the case that $0<s_0\leq \frac12$ by Cauchy-Schwarz and dismissing $\lambda_3$ we need to bound
\begin{multline*}
\sup_{\xi,\lambda_{1}}\langle \xi \rangle^{2s_1+2a-2b}\langle \lambda_1\rangle^{-2\epsilon}\iint_{|\xi_1|\sim |\xi|}\frac{d\xi_1d\lambda_2}{\langle \xi_1 \rangle^{2b+2s_{0}-4\epsilon}\langle \xi-\xi_1\rangle^{2s_0}\langle \lambda_2\rangle^{2b}}\\ \lesssim \sup_{\xi,\lambda_{1}}\langle \xi \rangle^{2s_1+2a-2b}\langle \lambda_1\rangle^{-2\epsilon+1-2b}\int_{|\xi_1|\sim |\xi|}\frac{d\xi_1}{\langle \xi_1 \rangle^{2b+2s_{0}-4\epsilon}\langle \xi-\xi_1\rangle^{2s_0}}\\ \lesssim \sup_{\xi,\lambda_{1}}\langle \xi \rangle^{2s_1+2a-2b}\langle \lambda_1\rangle^{-2\epsilon+1-2b}\langle \xi \rangle^{1-4s_0-2b+4\epsilon}. 
\end{multline*}
Again, this is finite for $b=\frac12-$ and any $a<2s_0-s_1+2b-\frac12$.
\\
\\
Case 3b: $|\xi-\xi_1| \gtrsim |\xi|$. Notice that in this case $\langle\xi-\xi_1 \rangle \sim \langle\xi \rangle\sim \langle \xi_1 \rangle$. Proceeding like in Case 1 it is enough to bound
$$\sup_{\xi,\lambda_1}\langle \lambda_1\rangle^{-2\epsilon}\iint_{|\xi_1|\sim |\xi|}\frac{\langle \xi \rangle^{2s_1+2a-4s_0-2b+2\epsilon}d\xi_1 d\lambda_2}{{\langle \xi-2\xi_1 \rangle}^{2b-2\epsilon}{\langle \lambda_2 \rangle}^{2b}{\langle \lambda_3 \rangle}^{2b}}.$$
Since $$\int_{|\xi_1|\sim |\xi|}\frac{d\xi_1 }{{\langle \xi-2\xi_1 \rangle}^{2b-2\epsilon}}\lesssim |\xi|^{1-2b+2\epsilon},$$
if we dismiss $\lambda_3$ and integrate in $|\lambda_2|\lesssim |\lambda_1|$ as above the estimate follows for $b=\frac12-\epsilon$ and for any $a<2s_0-s_1+2b-\frac12$.
\vskip 0.3in
\noindent
{\bf Case B:} $|\lambda_2|=\max_{i=1,2,3}|\lambda_{i}| \gtrsim \langle \xi \rangle \langle \xi-2\xi_1 \rangle $. We have three subcases.
\\
\\
Case 1: $|\xi_1|\gg |\xi|$. In this case we have that $|\lambda_2|\gtrsim \langle \xi \rangle \langle \xi_1 \rangle$ and that $\langle \xi_1 \rangle \sim \langle \xi-\xi_1 \rangle$. By applying Cauchy Scwhartz as before in estimate \eqref{fnonlinear} it is enough to bound
\begin{equation} \label{caseB1} \sup_{\xi_1,\lambda_2}\langle \lambda_2\rangle^{-2\epsilon}\iint\frac{\langle \xi \rangle^{2s_1+2a-2b + 2 \epsilon}d\xi d\lambda_1}{{\langle \xi_1 \rangle}^{4s_{0}+2b - 2 \epsilon}{\langle \lambda_1 \rangle}^{2b}{\langle \lambda_3 \rangle}^{2b}}.
\end{equation}
First observe that for $a<b-s_1$ we can dismiss the power in the $\xi$ variable and thus we need to bound
$$\sup_{\xi_1,\lambda_2}\langle \lambda_2\rangle^{-2\epsilon}\iint\frac{d\xi d\lambda_1}{{\langle \xi_1 \rangle}^{4s_{0}+2b}{\langle \lambda_1 \rangle}^{2b}{\langle \lambda_3 \rangle}^{2b}}.$$
We now change variables from $\xi$ to $\lambda_3$ (for fixed $\xi_1,\lambda_1,\lambda_2$) using $\lambda_1-\lambda_3-\lambda_2=\xi^2-2\xi\xi_1$ to obtain $d\lambda_3=2|\xi_1|d\xi$. Thus the integral is majorized by
$$\sup_{\xi,\lambda_2}\langle \xi_1 \rangle^{-2b-4s_0-1+2\epsilon}\langle \lambda_2\rangle^{-2\epsilon}\iint_{|\lambda_1|,|\lambda_3|\lesssim |\lambda_2|}\frac{d\lambda_1 d\lambda_3}{\langle \lambda_1\rangle^{2b} \langle \lambda_3\rangle^{2b}} \lesssim 1$$
which is acceptable.

If $a \geq b - s_1$, we go back to \eqref{caseB1} and instead of dismissing $\lb \xi \rb^{2s_1 + 2a + 2 \epsilon}$, we bound it by $\lb \xi_1 \rb^{2s_1 + 2a + 2 \epsilon}$ and proceed as before. 
\\
\\
Case 2: $|\xi|\gg |\xi_1|$. In this case $|\lambda_2|\gtrsim \langle \xi \rangle^2 $ and $\langle \xi \rangle \sim \langle \xi-\xi_1 \rangle$. We thus have to bound
$$\sup_{\xi_1,\lambda_2}\langle \lambda_2\rangle^{-2\epsilon}\iint_{|\xi_1|\lesssim |\xi|}\frac{\langle \xi \rangle^{2s_1+2a-2s_0-4b+4\epsilon}d\xi d\lambda_1}{{\langle \xi_1 \rangle}^{2s_{0}}{\langle \lambda_1 \rangle}^{2b}{\langle \lambda_3 \rangle}^{2b}}.$$
Now we dismiss $\lambda_3$ and note that since $s_0\leq 0$ we have that $\langle \xi_1\rangle^{-2s_0}\lesssim \langle \xi\rangle^{-2s_0}$.
Then
$$\int_{|\lambda_1|\lesssim |\lambda_2|}\frac{d\lambda_2}{\langle \lambda_1 \rangle^{2b}} \lesssim |\lambda_2|^{1-2b}$$
and we need to bound 
$$\sup_{\xi_1}\int_{|\xi_1|\lesssim |\xi|}\langle \xi \rangle^{2s_1+2a-4s_0-4b+4\epsilon}d\xi.$$
This integral is finite as long as $a<2s_0-s_1+2b-\frac12$.

For $0 < s_0 \leq \frac12$, we go back to \eqref{fnonlinear} and apply Cauchy-Schwarz in $\xi$, $\tau$ and then in $\xi_1, \tau_1$ so that we arrive at an expression containing the supremum over $\xi$. The argument then proceeds as in previous cases. 
\\
\\
Case 3: $|\xi|\sim |\xi_1|$.
\\
Case 3a: $|\xi-2\xi_1| \gtrsim |\xi|$. 
\\
In this subcase $|\lambda_2|\gtrsim \langle\xi\rangle^2$. In the case that $s_{0}\leq 0$ we can easily bound
$$\langle\xi-\xi_1\rangle^{-2s_0}\lesssim \langle\xi\rangle^{-2s_0}.$$
Then by Cauchy-Schwarz inequality and dismissing the $\lambda_3$ weight we need to bound
$$\sup_{\xi_1,\lambda_{2}}\langle \xi_1 \rangle^{2s_1+2a-4s_{0}-4b+4\epsilon}\langle \lambda_2\rangle^{-2\epsilon}\iint_{|\xi_1|\sim |\xi|}\frac{d\xi d\lambda_1}{\langle \lambda_1\rangle^{2b}}\lesssim \sup_{\xi_1,\lambda_{2}}\langle \xi_1 \rangle^{2s_1+2a-4s_{0}-4b+4\epsilon+1}\langle \lambda_2\rangle^{-2\epsilon+1-2b}\lesssim 1$$
for $\frac12 - b> 0 $ suffuciently small and any $a<2s_0-s_1+2b-\frac12$. In the case that $0<s_0\leq \frac12$ by Cauchy-Schwarz and dismissing $\lambda_3$ we need to bound
\begin{multline*}
\sup_{\xi_1,\lambda_{2}}\langle \xi_1 \rangle^{2s_1+2a-2b}\langle \lambda_2\rangle^{-2\epsilon}\iint_{|\xi_1|\sim |\xi|}\frac{d\xi d\lambda_1}{\langle \xi \rangle^{2b+2s_{0}-4\epsilon}\langle \xi-\xi_1\rangle^{2s_0}\langle \lambda_1\rangle^{2b}}\\ 
\lesssim \sup_{\xi_1,\lambda_{2}}\langle \xi_1 \rangle^{2s_1+2a-2b}\langle \lambda_2\rangle^{-2\epsilon+1-2b}\int_{|\xi_1|\sim |\xi|}\frac{d\xi}{\langle \xi \rangle^{2b+2s_{0}-4\epsilon}\langle \xi-\xi_1\rangle^{2s_0}}\\ 
\lesssim \sup_{\xi_1,\lambda_{2}}\langle \xi_1 \rangle^{2s_1+2a-2b}\langle \lambda_2\rangle^{-2\epsilon+1-2b}\langle \xi_1 \rangle^{1-4s_0-2b+4\epsilon}\lesssim 1
\end{multline*}
for $b=\frac12-$ and any $a<2s_0-s_1+2b-\frac12$.
\\
\\
Case 3b: $|\xi-\xi_1| \gtrsim |\xi|$. Notice that in this case $\langle\xi-\xi_1 \rangle \sim \langle\xi \rangle\sim \langle \xi_1 \rangle$. Proceeding like in Case 1 it is enough to bound
$$\sup_{\xi_1,\lambda_2}\langle \lambda_2\rangle^{-2\epsilon}\iint_{|\xi_1|\sim |\xi|}\frac{\langle \xi_1 \rangle^{2s_1+2a-4s_0-2b+2\epsilon}d\xi d\lambda_1}{{\langle \xi-2\xi_1 \rangle}^{2b-2\epsilon}{\langle \lambda_1 \rangle}^{2b}{\langle \lambda_3 \rangle}^{2b}}.$$
Since $$\int_{|\xi_1|\sim |\xi|}\frac{d\xi }{{\langle \xi-2\xi_1 \rangle}^{2b-2\epsilon}}\lesssim |\xi_1|^{1-2b+2\epsilon},$$
if we dismiss $\lambda_3$ and integrate in $|\lambda_1|\lesssim |\lambda_2|$ as above the estimate follows for $b=\frac12-\epsilon$ and for any $a<2s_0-s_1+2b-\frac12$.

\end{proof}

\begin{proposition} \label{KG_bilinearI} For any $\frac13\leq b<\frac12$ but close to $\frac12$, any $-\frac14<s_0\leq 0$, $-\frac12<s_1\leq 0$  and any $$a<s_1+2b-\frac12$$
we have
$$\|uv\|_{X^{s_{0}+a,-b}}\lesssim \|u\|_{X^{s_{0},b}}\|v\|_{Y^{s_{1},b}}.$$
\end{proposition}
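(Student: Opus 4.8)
Since $\|v\|_{Y^{s_1,b}}$ is an infimum over decompositions $v=v_++v_-$, it suffices to bound $\|uv\|_{X^{s_0+a,-b}}$ by $\|u\|_{X^{s_0,b}}\|v\|_{Y^{s_1,b}_\pm}$ for each sign separately; the two signs are exchanged by $\xi\mapsto-\xi$, so I treat the ``$+$'' case only. Dualizing the $X^{s_0+a,-b}$ norm and renaming the $L^2$ functions, the estimate is equivalent to
\[ \iiiint\frac{f(\xi_1,\tau_1)\,g(\xi-\xi_1,\tau-\tau_1)\,h(\xi,\tau)\,\langle\xi\rangle^{s_0+a}\,d\xi_1\,d\tau_1\,d\xi\,d\tau}{\langle\xi_1\rangle^{s_0}\langle\xi-\xi_1\rangle^{s_1}\langle\tau-\xi^2\rangle^{b}\langle\tau_1-\xi_1^2\rangle^{b}\langle(\tau-\tau_1)-(\xi-\xi_1)\rangle^{b}}\lesssim\|f\|_{L^2}\|g\|_{L^2}\|h\|_{L^2}, \]
with $f,g,h\ge0$. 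Put $\xi_2=\xi-\xi_1$ and $\lambda_1=\tau-\xi^2$, $\lambda_2=\tau_1-\xi_1^2$, $\lambda_3=(\tau-\tau_1)-(\xi-\xi_1)$; the identity $\lambda_1-\lambda_2-\lambda_3=-\xi_2(\xi+\xi_1-1)$ yields $\max_i|\lambda_i|\gtrsim\langle\xi_2\rangle\langle\xi+\xi_1-1\rangle$ off the sets $\{|\xi_2|\lesssim1\}\cup\{|\xi+\xi_1-1|\lesssim1\}$. The hypothesis $b\ge\tfrac13>\tfrac14$ enters only to make the one-dimensional modulation integrals $\int\langle\lambda\rangle^{-2b}\langle c-\lambda\rangle^{-2b}\,d\lambda\lesssim\langle c\rangle^{1-4b}$ converge.

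\emph{Resonant pieces.} On $\{|\xi_2|\lesssim1\}$ and $\{|\xi+\xi_1-1|\lesssim1\}$ I would argue as in the opening of the proof of Proposition~\ref{KG_bilinear}: Cauchy--Schwarz in $(\xi,\tau)$, then in $(\xi_1,\tau_1)$, then integration in $\tau_1$, and absorption of $\langle\tau-\xi^2\rangle^{2b}$ into the resulting factor (using $0<4b-1\le2b$), which reduces the estimate on the region at hand to the finiteness of $\sup_\xi\langle\xi\rangle^{2(s_0+a)}\int\langle\xi_1\rangle^{-2s_0}\langle\xi-\xi_1\rangle^{-2s_1}\langle\xi_2(\xi+\xi_1-1)\rangle^{-(4b-1)}\,d\xi_1$. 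On $\{|\xi_2|\lesssim1,|\xi|\gtrsim1\}$, where $\langle\xi_1\rangle\sim\langle\xi\rangle$, the substitution $\xi_1\mapsto\xi_2(\xi+\xi_1-1)$ (Jacobian $|1-2\xi_1|\sim\langle\xi\rangle$) gives the bound $\langle\xi\rangle^{2a+1-4b}$, finite since $a<2b-\tfrac12$, which is implied by $a<s_1+2b-\tfrac12$ because $s_1\le0$. On $\{|\xi+\xi_1-1|\lesssim1,|\xi|\gtrsim1\}$, where $\langle\xi_1\rangle\sim\langle\xi_2\rangle\sim\langle\xi\rangle$, the same substitution gives $\langle\xi\rangle^{2a-2s_1+1-4b}$, finite \emph{precisely} when $a<s_1+2b-\tfrac12$; this is the region where the hypothesis on $a$ is sharp. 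The piece on which all of $|\xi|,|\xi_1|,|\xi_2|$ are $O(1)$ is immediate.

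\emph{Non-resonant region.} On $\{|\xi_2|\gg1,|\xi+\xi_1-1|\gg1\}$ one has $\max_i|\lambda_i|\gtrsim\langle\xi_2\rangle\langle\xi+\xi_1-1\rangle\gg1$, and I would reproduce the bookkeeping of Cases~A and B of Proposition~\ref{KG_bilinear} (the case ``$|\lambda_3|$ largest'' being the mirror of Case~B): split according to which of $|\lambda_1|,|\lambda_2|,|\lambda_3|$ is largest; peel off $\langle\lambda_{\max}\rangle^{-2\epsilon}$ and bound the rest by $\langle\lambda_{\max}\rangle^{-(2b-2\epsilon)}\lesssim\langle\xi_2\rangle^{-(2b-2\epsilon)}\langle\xi+\xi_1-1\rangle^{-(2b-2\epsilon)}$; apply Cauchy--Schwarz so as to leave a supremum over one frequency and one modulation variable; and integrate the two non-dominant modulations over $\{|\lambda|\lesssim|\lambda_{\max}|\}$ after turning one factor $d\xi_1$ into a modulation differential through $\xi_1\mapsto\lambda_3$ (Jacobian $|1-2\xi_1|\sim\max(|\xi|,|\xi_1|)$). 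Subdividing further according to the relative sizes of $|\xi|,|\xi_1|,|\xi_2|$ exactly as in Proposition~\ref{KG_bilinear}, the surviving power of the remaining frequency turns out to be non-positive in every case; the bound $a<s_1+2b-\tfrac12$ re-appears in the balanced sub-cases $|\xi|\sim|\xi_1|$, and $s_0>-\tfrac14$ is used -- as in the first subcase of Case~A of Proposition~\ref{KG_bilinear} -- to transfer a power of $\langle\xi\rangle$ onto $\langle\xi_1\rangle$.

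\emph{The main difficulty.} The delicate configuration is the high--high-to-low interaction $|\xi_1|\sim|\xi_2|\gg|\xi|$, where the two growing weights $\langle\xi_1\rangle^{-s_0}$ and $\langle\xi_2\rangle^{-s_1}$ occur simultaneously; there one must use the full resonance bound $\max_i|\lambda_i|\gtrsim\langle\xi_1\rangle^2$, and balancing these weights against the modulation gain is exactly what forces $s_0>-\tfrac14$ and couples $b$ to $s_0+s_1$ -- for $b$ as small as $\tfrac13$ this region is comfortably controlled at the regularity $s_0=0$ used in the global theory, while for general $s_0\in(-\tfrac14,0]$ one needs $b$ correspondingly close to $\tfrac12$. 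Away from this configuration every sub-case is strictly better than the borderline $a<s_1+2b-\tfrac12$, and the argument follows the same template as Proposition~\ref{KG_bilinear}, to which I would refer for the routine estimates.
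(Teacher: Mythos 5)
Your proposal is essentially the paper's proof, modulo a relabeling of the two input factors: you put the Schr\"odinger weight on $\xi_1$ and the wave weight on $\xi_2=\xi-\xi_1$, while the paper does the reverse. The resonance identity, the split into $R1$/$R2$ plus non-resonant cases, the $\langle\lambda_{\max}\rangle^{-2\epsilon}$ peel-off, and the frequency-to-modulation change of variable are all the same, and your resonant computations correctly locate the sharp constraint $a<s_1+2b-\tfrac12$ in $R2$.

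Two small slips worth noting, neither of which damages the argument. First, the Jacobian claim is not correct as stated in your conventions: with the resonance function $\xi_1^2-\xi_1-\xi^2+\xi$, differentiating in $\xi_1$ gives $|2\xi_1-1|\sim|\xi_1|$, not $\max(|\xi|,|\xi_1|)$; you only recover $|\xi|$ by instead changing $\xi\mapsto\lambda_3$ at fixed $\xi_1$ (derivative $|2\xi-1|$), which is exactly the case-dependent choice the paper makes. So the ``$\max$'' is achieved by choosing which frequency to convert, not by a single change of variable. Second, your closing paragraph somewhat overstates the role of the high--high-to-low non-resonant configuration: in this proposition that sub-case gives the slack bound $a<s_0+s_1+2b+\tfrac12$, which is never binding under the stated hypotheses. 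The binding $a<s_1+2b-\tfrac12$ comes from the balanced resonant region $R2$ (as you also note), and the real issue for taking $b$ down to $\tfrac13$ is addressed separately in the paper's remark following the two propositions, via a more careful choice of Cauchy--Schwarz ordering.
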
 

\begin{proof}
By duality and after renaming the $L^2$ based functions it is enough to consider the estimate
\begin{equation}\label{f1nonlinear}
\iiiint\frac{f(\xi_1,\tau_1)g(\xi-\xi_1,\tau-\tau_1)h(\xi,\tau)\langle \xi \rangle^{s_0+a}d\xi_{1}d\tau_{1}d\xi d\tau}{{\langle \xi_1 \rangle}^{s_{1}}{\langle \xi-\xi_{1} \rangle}^{s_{0}}{\langle \tau_{1}-\xi_{1} \rangle}^{b}{\langle \tau+\xi^2\rangle}^{b}{\langle \tau-\tau_1+(\xi-\xi_1)^2\rangle}^{b}}\lesssim \|f\|_{L^2}\|g\|_{L^2}\|h\|_{L^2}.
\end{equation}
Setting
$$\lambda_1=\tau_1-\xi_1$$
$$\lambda_2=\tau+\xi^2$$
$$\lambda_3=\tau-\tau_{1}+(\xi-\xi_{1})^2$$
we notice that $\lambda_3-\lambda_2+\lambda_1=\xi_1(\xi_1-2\xi-1)$ while if we change variables $\xi\rightarrow \xi-\frac{1}{2}$ inside the integral the identity becomes
$$\lambda_3-\lambda_2+\lambda_1=\xi_1(\xi_1-2\xi).$$ 
We first consider the resonant cases R1 and R2 where $|\xi_1|\lesssim 1$ and $|\xi_1-2\xi|\lesssim 1$ respectively.
\\
\\
{\bf R1:} $|\xi_1|\lesssim 1$.
By applying the Cauchy Schwarz inequality in \eqref{f1nonlinear} first in the $\xi,\tau$ variables and then in the $\xi_1,\tau_1$ variables and integrating the $\tau_1$ integral it is enough to bound the integral
$$\sup_{\xi,\tau}\int\frac{\langle \xi \rangle^{2(s_0+a)}d\xi_{1}}{{\langle \xi_1 \rangle}^{2s_{1}}{\langle \xi-\xi_{1} \rangle}^{2s_{0}}{\langle \tau+\xi^2 \rangle}^{2b}{\langle \tau+(\xi-\xi_1)^2-\xi_1\rangle}^{4b-1}}.$$
Since $4b-1<2b$ this reduces to
\begin{equation}\label{red}
\sup_{\xi}\int\frac{\langle \xi \rangle^{2(s_0+a)}d\xi_{1}}{{\langle \xi_1 \rangle}^{2s_{1}}{\langle \xi-\xi_{1} \rangle}^{2s_{0}}{\langle \xi_1(\xi_1-2\xi) \rangle}^{4b-1}}
\end{equation}
after the aforementioned change of variables $\xi\rightarrow \xi-\frac{1}{2}$. In the case that $|\xi|\lesssim 1$ there is nothing to prove so we consider the subcase that $|\xi|\gg 1$. It is enough then to bound
$$\sup_{\xi}\int_{|\xi_1|\lesssim 1} \frac{\langle \xi \rangle^{2a}}{\langle \xi \xi_1\rangle^{4b-1}}d\xi_1 \lesssim \sup_{\xi}\int_{|\xi_1|\lesssim 1} \frac{\langle \xi \rangle^{2a}}{\langle \xi\rangle^{4b-1}|\xi_1|^{4b-1}}d\xi_1$$
which for any $a<2b-\frac12$ is majorized by
$$\int_{|\xi_1|\lesssim 1} \frac{1}{|\xi_1|^{4b-1}}d\xi_1$$
which is bounded for any $b<\frac12$.
\\
\\
{\bf R2:} $|\xi_1-2\xi|\lesssim 1$. In this case $\langle \xi_1 \rangle \sim \langle \xi \rangle$ and $\langle \xi_1-\xi \rangle=\langle \xi_1-2\xi+\xi \rangle\sim \langle \xi \rangle$.
Then \eqref{red} becomes
\begin{multline*}\label{red_sec}
\sup_{\xi}\int_{|\xi_1-2\xi|\lesssim 1}\frac{\langle \xi \rangle^{2a - 2s_1}d\xi_{1}}{{\langle \xi_1(\xi_1-2\xi) \rangle}^{4b-1}}\lesssim \sup_{\xi}\int_{|\xi_1-2\xi|\lesssim 1}\frac{\langle \xi \rangle^{2a-2s_1}d\xi_{1}}{|\xi_1|^{4b-1}|\xi_1-2\xi| ^{4b-1}}\\ \lesssim \sup_{\xi}\langle \xi \rangle^{2a - 2s_1-4b+1}\int_{|\xi_1-2\xi|\lesssim 1}\frac{d\xi_{1}}{|\xi_1-2\xi| ^{4b-1}}\lesssim 1
\end{multline*}
for any $b<\frac12$ and any $a<s_1+2b-\frac12$.
\vskip 0.05in
\noindent
We now consider the non-resonant frequencies noting that in this case $$\max_{i=1,2,3}|\lambda_{i}| \gtrsim \langle \xi_1 \rangle \langle \xi_1-2\xi \rangle.$$ We will consider the cases that $\lambda_1=\max_{i=1,2,3}|\lambda_{i}|$ and that  $\lambda_2=\max_{i=1,2,3}|\lambda_{i}|$. The case that  $\lambda_3=\max_{i=1,2,3}|\lambda_{i}|$ is almost identical and it is omitted.
\vskip0.05in
\noindent
{\bf Case A:} $|\lambda_2|=\max_{i=1,2,3}|\lambda_{i}| \gtrsim \langle \xi_1 \rangle \langle \xi_1-2\xi \rangle $. We have three subcases.
\\
\\
Case 1: $|\xi_1|\gg |\xi|$. In this case $|\lambda_2| \gtrsim \langle \xi_1 \rangle^2$ and $\langle \xi_1 \rangle \sim \langle \xi-\xi_1 \rangle$. 
By applying Cauchy Schwarz as before in estimate \eqref{f1nonlinear} it is enough to bound
$$\sup_{\xi,\lambda_2}\langle \lambda_2\rangle^{-2\epsilon}\iint\frac{\langle \xi \rangle^{2s_0+2a}d\xi_1 d\lambda_1}{{\langle \xi_1 \rangle}^{2s_{1}+2s_{0}+4b-4\epsilon}{\langle \lambda_1 \rangle}^{2b}{\langle \lambda_3 \rangle}^{2b}}.$$
We now change variables from $\xi_1$ to $\lambda_3$ (for fixed $\xi,\lambda_1,\lambda_2$) using $\lambda_3-\lambda_2+\lambda_2=\xi_1^2-2\xi\xi_1$ to obtain $d\lambda_3\sim \langle \xi_1 \rangle d\xi_1$. Thus the integral is majorized by
$$\sup_{\xi,\lambda_2}\langle \xi \rangle^{2a-1-2s_1-4b+4\epsilon} \langle \lambda_2\rangle^{-2\epsilon}\iint_{|\lambda_1|,|\lambda_3|\lesssim |\lambda_2|}\frac{d\lambda_3 d\lambda_1}{\langle \lambda_1 \rangle^{2b}{\langle \lambda_3 \rangle}^{2b}}
\lesssim \sup_{\xi,\lambda_2}\langle \xi \rangle^{2a-1-2s_1-4b+4\epsilon} \langle \lambda_2\rangle^{2-4b-2\epsilon}$$
which is finite for $\frac12 - b > 0$ sufficiently small and any $a<s_1+2b+\frac12$ .
\\
\\
Case 2: $|\xi|\gg |\xi_1|$. In this case $|\lambda_2|\gtrsim \langle \xi_1 \rangle  \langle \xi\rangle$ and $\langle \xi \rangle \sim \langle \xi-\xi_1 \rangle$. We thus have to bound
$$\sup_{\xi,\lambda_2}\langle \lambda_2\rangle^{-2\epsilon}\iint_{|\xi_1|\lesssim |\xi|}\frac{\langle \xi \rangle^{2a-2b+2\epsilon}d\xi_1 d\lambda_1}{{\langle \xi_1 \rangle}^{2s_{1}+2b-2\epsilon}{\langle \lambda_1 \rangle}^{2b}{\langle \lambda_3 \rangle}^{2b}}.$$
If we dismiss $\lambda_3$ and integrate
$$\int_{|\lambda_1|\lesssim |\lambda_2|}\frac{d\lambda_1}{\langle \lambda_1 \rangle^{2b}} \lesssim |\lambda_2|^{1-2b}$$ 
we need to bound
$$\sup_{\xi,\lambda_2}\langle \xi \rangle^{2a-2b+2\epsilon}\langle \lambda_2\rangle^{1-2b-2\epsilon}\int_{|\xi_1|\lesssim |\xi|}\frac{d\xi_1 }{\lb \xi_1 \rb ^{2s_{1}+2b-2\epsilon}}\lesssim \sup_{\xi,\lambda_2}\langle \xi \rangle^{2a-4b+1-2s_1+4\epsilon}\langle \lambda_2\rangle^{1-2b-2\epsilon}$$
where the last inequality follows because $2s_1+2b<1$. It is now clear that if we pick $b=\frac12-$ and any $a<s_1+2b-\frac12$ the supremum is bounded.
\vskip 0.05in
\noindent
Case 3: $|\xi|\sim |\xi_1|$.
\vskip 0.05in
\noindent
Case 3a: $|\xi_1-2\xi| \gtrsim |\xi|$. In this case notice that since $s_0\leq 0$ we have that $\frac{1}{\langle \xi-\xi_1\rangle^{s_0}}\lesssim \langle \xi \rangle^{-s_0}$ and we need to bound
$$\sup_{\xi,\lambda_2}\langle \xi \rangle^{2a-4b-2s_1+4\epsilon}\langle \lambda_2\rangle^{-2\epsilon}\iint_{|\xi_1|\sim |\xi|}\frac{d\xi_1 d\lambda_1}{{\langle \lambda_1 \rangle}^{2b}{\langle \lambda_3 \rangle}^{2b}}.$$
Now dismissing $\lambda_3$ and using 
$$\int_{|\lambda_1|\lesssim |\lambda_2|}\frac{d\lambda_1}{\langle \lambda_1 \rangle^{2b}} \lesssim |\lambda_2|^{1-2b}$$ and $\int_{|\xi_1|\sim |\xi|}d\xi_1\lesssim |\xi|$ we obtain the desired bound for any $b=\frac12-$ and any  $a<s_1+2b-\frac12$.
\vskip 0.05in
\noindent
Case 3b: $|\xi-\xi_1| \gtrsim |\xi|$. This case is identical with case 3a; the only new ingredient is
$$\int_{|\xi_1|\sim |\xi|}\frac{d\xi_1}{\langle \xi_1-2\xi\rangle^{2b-2\epsilon}} \lesssim |\xi|^{1-2b+2\epsilon}.$$ Again any $b=\frac12-$ and any  $a<s_1+2b-\frac12$ works.
\vskip0.05in
\noindent
{\bf Case B:} $|\lambda_1|=\max_{i=1,2,3}|\lambda_{i}| \gtrsim \langle \xi_1 \rangle \langle \xi_1-2\xi \rangle $. We have three subcases.
\\
\\
Case 1: $|\xi|\gg |\xi_1|$. Here we apply Cauchy Schwarz inequality first in the $\xi_1,\tau_1$ variables
 and then in the $\xi,\tau$ variables. It is sufficient to bound
$$\sup_{\xi_1,\lambda_1}\langle \xi_1\rangle^{-2s_1-2b+2\epsilon}\langle \lambda_1\rangle^{-2\epsilon}\iint_{|\xi_1|\lesssim |\xi|}\frac{\langle \xi \rangle^{2a-2b+2\epsilon}d\xi d\lambda_2}{{\langle \lambda_2 \rangle}^{2b}{\langle \lambda_3 \rangle}^{2b}}.$$ 
Since $a<b$ we estimate $\langle \xi \rangle^{2a-2b+2\epsilon}\lesssim \langle \xi_1 \rangle^{2a-2b+2\epsilon}$ to obtain
$$\sup_{\xi_1,\lambda_1}\langle \xi_1\rangle^{2a-2s_1-4b+4\epsilon}\langle \lambda_1\rangle^{-2\epsilon}\iint_{|\xi_1|\lesssim |\xi|}\frac{d\xi d\lambda_2}{{\langle \lambda_2 \rangle}^{2b}{\langle \lambda_3 \rangle}^{2b}}.$$ 
Since $$\lambda_3-\lambda_2+\lambda_1=\xi_1^2-2\xi_1\xi,$$ for fixed $\lambda_1,\xi_1,\lambda_2$ we have that $d\lambda_3 \sim \langle \xi_1 \rangle d\xi$ (since $|\xi_1|\gg 1$ in the non-resonant case).
We thus need to bound
$$\sup_{\xi_1,\lambda_1}\langle \xi_1\rangle^{2a-2s_1-4b-1+4\epsilon}\langle \lambda_1\rangle^{-2\epsilon}\iint_{|\lambda_2|,|\lambda_3|\lesssim |\lambda_1|}\frac{d\lambda_3 d\lambda_2}{{\langle \lambda_2 \rangle}^{2b}{\langle \lambda_3 \rangle}^{2b}}$$ 
and as above we are done if $b=\frac12-$ and $a<s_1+2b+\frac12$. 
\\
\\
Case 2: $|\xi_1|\gg |\xi|$. By the Cauchy Schwarz inequality, it is enough to bound
$$\sup_{\xi_1,\lambda_1}\langle \lambda_1\rangle^{-2\epsilon}\iint\frac{\langle \xi \rangle^{2s_0+2a}d\xi d\lambda_2}{{\langle \xi_1 \rangle}^{2s_1+2s_{0}+4b-4\epsilon}{\langle \lambda_2 \rangle}^{2b}{\langle \lambda_3 \rangle}^{2b}}\lesssim \sup_{\xi_1,\lambda_1}\langle \lambda_1\rangle^{-2\epsilon}\iint\frac{\langle \xi \rangle^{2a}d\xi d\lambda_2}{{\langle \xi_1 \rangle}^{2s_1+2s_{0}+4b-4\epsilon}{\langle \lambda_2 \rangle}^{2b}{\langle \lambda_3 \rangle}^{2b}}$$
since $s_0\leq 0$.
Changing variables as before we have that $d\lambda_3\sim \langle \xi_1\rangle d\xi$ and thus the integrals are bounded for any $a<s_1+s_0+2b+\frac12$.
\vskip 0.05in
\noindent
Case 3: $|\xi|\sim |\xi_1|$.
\vskip 0.05in
\noindent
Case 3a: $|\xi_1-2\xi| \gtrsim |\xi|$. By Cauchy Schwarz first in the $(\xi_1,\tau_1)$ variables and then in the $(\xi,\tau)$ variables, we need to bound
$$\sup_{\xi_1,\lambda_1}\langle \lambda_1\rangle^{-2\epsilon}\iint_{|\xi|\sim |\xi_1|}\frac{\langle \xi_1 \rangle^{2s_0+2a-2s_1-4b+4\epsilon}d\xi d\lambda_3}{{\langle \xi-\xi_1 \rangle}^{2s_{0}}{\langle \lambda_2 \rangle}^{2b}{\langle \lambda_3 \rangle}^{2b}}.$$
But ${\langle \xi-\xi_1\rangle}^{-2s_0}\lesssim \langle \xi \rangle^{-2s_0}$. Thus by dismissing $\lambda_2$, integrating $\lambda_3$ and using the crude estimate $\int_{|\xi|\sim |\xi_1|}d\xi\lesssim |\xi_1|$ we obtain the bound for any $b=\frac12-$ and $a<s_1+2b-\frac12$.
\vskip 0.05in
\noindent
Case 3b: $|\xi-\xi_1| \gtrsim |\xi|$. In this case it is enough to bound
$$\sup_{\xi_1,\lambda_1}\langle \lambda_1\rangle^{-2\epsilon}\iint_{|\xi|\sim |\xi_1|}\frac{\langle \xi_1 \rangle^{2a-2s_1-2b+2\epsilon}d\xi d\lambda_3}{\langle \xi_1-2\xi\rangle^{2b-2\epsilon}{\langle \lambda_2 \rangle}^{2b}{\langle \lambda_3 \rangle}^{2b}}.$$
If we dismiss $\lambda_2$, integrate $\lambda_3$ and use the estimate
$$\int_{|\xi|\sim |\xi_1|}\frac{d\xi }{\langle \xi_1-2\xi\rangle^{2b-2\epsilon}}\lesssim |\xi_1|^{1+2\epsilon-2b}$$
we obtain the desired bound for any $b=\frac12-$ and $a<s_1+2b-\frac12$.

\end{proof}

\begin{remark}
Our proof of the previous two propostions holds for $b=\frac12-$, $s_0\geq -\frac14$ with any $s_1>-\frac12$. Notice though that for $b=\frac13$, $s_0=0$, and $a=0$, our results restrict $s_1>-\frac16$. However, we must take $b= \frac13$ for our globalizing method to succeed. A more refined application of the Cauchy Schwarz method (in effect a case-by-case implementation of the right change of variables) can improve the previous two Propositions, achieving $s_1\geq -\frac12$ even in the case that $b=\frac13$. Here we show how one can achieve this for the resonant cases. For the non-resonant cases the procedure is similar and it is omitted. The interested reader can easily fill in the details of the argument.

 Note that in Proposition \ref{KG_bilinear} the resonant case covers the case that $b=\frac13$ and $s\geq -\frac12$. In Proposition \ref{KG_bilinearI} the first case follows for any $b<\frac12$ but in the second case where $|\xi_1|\gg 1$ and $|\xi-2\xi|\lesssim 1$ we proceed  as follows (recall that $a=0$ and $s_0=0$):
We set
$$\lambda_1=\tau_1-\xi_1,$$
$$\lambda_2=\tau+\xi^2,$$
$$\lambda_3=\tau-\tau_{1}+(\xi-\xi_{1})^2$$
and notice that it is enough to bound
 \begin{equation}
\iiiint\frac{f(\xi_1,\tau_1)g(\xi-\xi_1,\tau-\tau_1)h(\xi,\tau)\langle \xi \rangle^{-s_1}d\xi_{1}d\tau_{1}d\xi d\tau}{{\langle \lambda_1 \rangle}^{b}{\langle \lambda_2\rangle}^{b}{\langle \lambda_3\rangle}^{b}}\lesssim \|f\|_{L^2}\|g\|_{L^2}\|h\|_{L^2}.
\end{equation}
Now consider the case that $|\lambda_1|=\max_{i=1,2,3}|\lambda_{i}|$. The other two cases are treated similarly. By the Cauchy Schwarz inequality first  in the $\xi_1,\tau_1$ variables
 and then in the $\xi,\tau$ variables  we need to bound
 $$\sup_{\xi_1,\lambda_1}\langle \xi_1\rangle^{-2s_1}\langle \lambda_1\rangle^{-2b}\iint\frac{d\xi d\lambda_2}{{\langle \lambda_2 \rangle}^{2b}{\langle \lambda_3 \rangle}^{2b}}.$$ 
 But since $$\lambda_3-\lambda_2+\lambda_1=\xi_1^2-2\xi_1\xi$$ for fixed $\lambda_1,\xi_1,\lambda_2$ we have that $d\lambda_3 \approx \langle \xi_1 \rangle d\xi$. Thus we need to bound
 $$\sup_{\xi_1,\lambda_1}\langle \xi_1\rangle^{-1-2s_1}\langle \lambda_1\rangle^{-2b}\iint_{|\lambda_2|,|\lambda_3|\lesssim |\lambda_1|}\frac{d\lambda_3 d\lambda_2}{{\langle \lambda_2 \rangle}^{2b}{\langle \lambda_3 \rangle}^{2b}}\lesssim \sup_{\xi_1,\lambda_1}\langle \xi_1\rangle^{-1-2s_1}\langle \lambda_1\rangle^{2-6b}\lesssim 1$$
 for any $\frac13 \leq b<\frac12$ and any $s_1\geq -\frac12$.
 
\end{remark}

\section{Local Existence \& Smoothing}\label{lwp}

We first prove that the map $\Gamma=(\Gamma_1,\Gamma_2)$, defined by 
\begin{equation} \label{gamma}\begin{array}{l}
\Gamma_1(u,n)(t)=\eta(t) W_0^t\big(u_0^e, g  \big) -i \eta(t) \int_0^t e^{i(t- t^\prime)\Delta}   F(u,n) \d t^\prime +i\eta(t) W_0^t\big(0,  q  \big), \\
\Gamma_2(u,n)(t)= \eta(t) V_0^t\big(\phi_{\pm}, h \big) + \frac12\eta(t) (n_+ +  n_-) - \frac12\eta(t) V_0^t(0,z),  
\end{array}
\end{equation}
has a fixed point in $X^{s_0,b}(\R)\times Y^{s_1,b}(\R)$. Recall that 
$F$, $q$, $n_\pm$, and $z$ are defined in \eqref{eq:def_duhamel_compnts}.

We begin with $\Gamma_1$. To see that the Duhamel term is bounded in $X^{s_0,b}\times Y^{s_1,b}$, we use following bounds. 
Combining \eqref{eq:xs2}, \eqref{eq:xs3}, and Proposition \ref{KG_bilinearI}, we obtain
\begin{multline*}
  \left\| \eta(t) \int_0^t e^{i(t- t')\Delta}   F(u,n) \d t' \right\|_{X^{s_0,b}} \lesssim \|F(u,n)\|_{X^{s_0,-\frac12+}} \\
  \lesssim T^{\frac12-b-} \|un\|_{X^{s_0,-b}}\lesssim T^{\frac12-b-} \|u\|_{X^{s_0,b}} \|n\|_{Y^{s_1,b}}.
\end{multline*}
To bound the linear part of $\Gamma_1$, recall that  
\[
\eta(t) W_0^t\big(u_0^e, g  \big)+i\eta(t) W_0^t\big(0,  q  \big)= \eta(t)e^{it\Delta} u_0^e+\eta(t) W_0^t\big( 0, g-p+iq \big). 
\]
By \eqref{eq:xs1} and our choice of $u_0^e$, we have
\[
\|\eta(t) e^{it\Delta} u_0^e\|_{X^{s_0,b}} \lesssim \|u_0^e\|_{H^{s_0}}\lesssim \|u_0\|_{H^{s_0} (\R^+)}.
\]
Using Lemma \ref{schr_ibvp_xsb} and Lemma \ref{char_func_lemma}  we have
\begin{multline} \label{eq:temp1}
 \| \eta(t) W_0^t\big( 0, g-p+iq \big)(t)\|_{X^{s_0,b}}
\lesssim \|(g-p+iq)\chi_{(0,\infty)} \|_{H^{\frac{2s_0+1}4}_t(\R)} \\ \lesssim \|g-p\|_{H^{\frac{2s_0+1}4}_t(\R^+)} + \|q\|_{H^{\frac{2s_0+1}4}_t(\R^+)}
\lesssim \|g \|_{H^{\frac{2s_0+1}4}_t(\R^+)} +\|p\|_{H^{\frac{2s_0+1}4}_t(\R )}+\|q\|_{H^{\frac{2s_0+1}4}_t(\R )}.
\end{multline}
By Lemma~\ref{schr_ivp_cont} and the definition of $p$ (see \eqref{eq:SchrIBVPformula}), we have
$$
\|p\|_{H^{\frac{2s_0+1}4}_t(\R )} \lesssim \|u_0\|_{H^{s_0}(\R^+)}.
$$
Recalling the definition of $q$ from \eqref{eq:def_duhamel_compnts} and combining  Proposition~\ref{schr_duhamel_est}, \eqref{eq:xs3}, and Proposition \ref{KG_bilinearI} yields 
$$
\|q\|_{H^{\frac{2s_0+1}4}_t(\R )} 
\lesssim T^{ \frac12-b-} \| u\|_{X^{s_0,b}} \|n\|_{Y^{s_1,b}}.
$$
Combining all these estimates, we obtain
\[
\|\Gamma_1 (u,n)\|_{X^{s_0,b}}\lesssim \|u_0\|_{H^{s_0}(\R^+)}+ \|g \|_{H^{\frac{2s_0+1}4}_t(\R^+)} + T^{ \frac12-b-} \| u\|_{X^{s_0,b}} \|n\|_{Y^{s_1,b}}.
\]

For the Duhamel term in $\Gamma_2$, we use the $Y^{s_1,b}_\pm$ analogs of \eqref{eq:xs2} and \eqref{eq:xs3} along with Proposition~\ref{KG_bilinear} to get
\[
\|\eta(t) (n_+ +  n_-)\|_{Y^{s_1,b}}\leq \|\eta(t)  n_+  \|_{Y^{s_1,b}_+}+\|\eta(t)  n_-  \|_{Y^{s_1,b}_-}  
\lesssim T^{\frac12-b-} \|u\|_{X^{s_0,b}}^2.
\]
For the linear part of $\Gamma_2$, first write
$$
\eta(t)V_0^t(\phi_\pm, h)-\frac12 \eta(t)V_0^t(0, z)=   \frac12 \eta(t) \left[e^{  t \partial_x} \phi_{+}+ e^{- t \partial_x} \phi_{-}\right] + \eta(t) V_0^t(0, h-r- z/2).
$$
Using Lemma \ref{ibvp_xsb} yields
\begin{multline*} \label{eq:temp12}
 \| \eta(t) V_0^t\big( 0,  h-r- z/2 \big)(t)\|_{Y^{s_1,b}}
\lesssim \|(h-r-z/2)\chi \|_{H^{s_1}_t(\R)} \\ \lesssim \|h-r\|_{H^{s_1}_t(\R^+)} + \|z\|_{H^{s_1}_t(\R^+)}
\lesssim \|h \|_{H^{s_1}_t(\R^+)} +\|r\|_{H^{s_1}_t(\R )}+\|z\|_{H^{s_1}_t(\R )}.
\end{multline*}
By Lemma~\ref{ivp_cont}, we have
\[
\|r\|_{H^{s_1}_t(\R )} \lesssim \|\phi_+\|_{H^{s_1}_t(\R )}+\|\phi_-\|_{H^{s_1}_t(\R )}\lesssim  \|n_0\|_{H^{s_1}_x(\R^+)} +\|n_1\|_{ H^{s_1 -1 }_x(\R^+)}.
\]
Finally, by combining  Proposition~\ref{wave_duhamel_est}, Proposition~\ref{prop:smooth23}, \eqref{eq:xs3}, and Proposition~\ref{KG_bilinear} we have
\[
\|z\|_{H^{s_1}_t(\R )} 
\lesssim T^{ \frac12-b-} \| u\|_{X^{s_0,b}}^2.
\]
Combining these estimates with the wave version of \eqref{eq:xs1},  we obtain
\[
\|\Gamma_2 (u,n)\|_{Y^{s_1,b}}\lesssim \|h \|_{H^{s_1}_t(\R^+)}+ \|n_0\|_{H^{s_1}_x(\R^+)} +\|n_1\|_{H^{s_1-1}_x(\R^+)} + T^{ \frac12-b-} \| u\|_{X^{s_0,b}}^2.
\]

The differences can be estimated similarly. Therefore, for $T$ sufficiently small, $\Gamma = (\Gamma_1, \Gamma_2)$ has a fixed point $(u,n) \in X^{s_0,b}\times Y^{s_1,b}$.

Next we show that $u\in C^0_t H^{s_0}_x([0,T]\times \R)$. Continuity in $H^{s_0}$ of the  first term of $\Gamma_1$ follows from continuity of the linear Schr\"odinger flow on $\R$. That of the third term is obtained using the embedding $X^{s_0,\frac12+} \hookrightarrow C_t^0 H^{s_0}$ (which holds for $b > \frac12$) and then using \eqref{eq:temp1}. For the Duhamel integral term, it follows from the embedding  $X^{s_0,\frac12+}\hookrightarrow C^0_tH^{s_0}_x$
 and \eqref{eq:xs2} together with Proposition~\ref{KG_bilinearI}. Similarly, we have $u\in C^0_x H^{\frac{2s_0+1}4}_t( \R \times [0,T])$ by Lemma~\ref{schr_ivp_cont}, Propositions~\ref{schr_duhamel_est} and \ref{schr_duhamel_corr}, and Lemma~\ref{schr_ibvp_cont}. The corresponding results for $n$ are proved similarly. 
Continuous dependence on initial and boundary data follows from the fixed point argument and the estimates given above.

The smoothing result is established by estimating the nonlinear terms in $X^{s_0,b} \times Y^{s_1,b}$ spaces just as in the local theory and exploiting the slack in the nonlinear estimates of Propositions \ref{KG_bilinear} and \ref{KG_bilinearI}. We omit the details.

\section{Uniqueness}\label{uni}

In this section, we consider uniqueness of solutions to the KGS system \eqref{eq:KGS}. First we discuss uniqueness for $s_0 > \frac12$. 

Suppose we obtain two solutions, say $(u_1, n_1)$ and $(u_2,n_2)$, to the system \eqref{eq:KGS} with the same initial and boundary conditions. Since $n_1$ and $n_2$ can be defined on the whole line, we may define $n_i^{\pm} = n_i \pm i D^{-1}(n_i)_t$. On the positive half-line, the $n^\pm_i$ satisfy
\[ i (n^\pm_i)_t = \pm D n^\pm_i \mp D^{-1}(|u|^2).\]  
Let $v = u_1 - u_2$ and $m^\pm = n_1^\pm - n_2^\pm$. These functions are defined on $\R \times \R^+$, and on $\R^+ \times \R^+$ they satisfy
\begin{equation*}
 \begin{cases}
  iv_t + \Delta v = \frac12\Bigl( u_1(m^+ + m^-) + v(n_2^+ +n_2^-) \Bigr) \\
  i m^\pm_t = \pm D m^\pm \mp D^{-1}\Bigl( u_1\overline{v} + \overline{u_2}v \Bigr).  
 \end{cases}
\end{equation*}
The initial and boundary conditions are all zero, in (at least) the $L^2$ sense.
Then we compute
\begin{align*}
 \partial_t \|v\|_{L^2_x(\R^+)}^2 &= \Im \int_0^\infty u_1 (m^+ + m^-) \overline{v} \d x \\
 \partial_t \|m^\pm\|_{L^2_x(\R^+)}^2 &=  \mp 2 \Im \int_0^\infty D^{-1}(u_1\overline{v} + \overline{u_2}v) \overline{m}^\pm \d x. 
\end{align*}

Integrating in time, we obtain
\begin{align*}
 \|v(t)\|_{L^2_x(\R^+)}^2 &\lesssim \|u_1\|_{L^\infty_{[0,t]}L^\infty_x} \int_0^t \Bigl[ \|m^+\|_{L^2_x(\R^+)} + \|m^-\|_{L^2_x(\R^+)} \Bigr]  \|v\|_{L^2_x(\R^+)} \d t'\\
 \|m^\pm\|_{L^2_x(\R^+)}^2 &\lesssim \Bigl[ \|u_1\|_{L^\infty_{[0,t]}L^\infty_x} +  \|u_2\|_{L^\infty_{[0,t]}L^\infty_x} \Bigr] \int_0^t \|m^\pm\|_{L^2_x(\R^+)}  \|v\|_{L^2_x(\R^+)} \d t'. 
\end{align*}
Using the embedding $H^{\frac12+} \hookrightarrow L^\infty$ and the local theory bounds, the $L^\infty$ norms of $u_1$ can be bounded by constants. Thus we get the inequality
\[ \Bigl[  \|v(t)\|_{L^2_x(\R^+)} + \|m^+\|_{L^2_x(\R^+)} + \|m^-\|_{L^2_x(\R^+)} \Bigr]^2 \lesssim \int_0^t \Bigl[  \|v(t)\|_{L^2_x(\R^+)} + \|m^+\|_{L^2_x(\R^+)} + \|m^-\|_{L^2_x(\R^+)} \Bigr]^2 \d t'.\] 
Gr\"onwall's inequality then implies that $v = m^\pm =0$; i.e. $u_1=u_2$ and $v_1 = v_2$ on the right half-line.

This establishes uniqueness of solutions in $H^{\frac12+} \times L^2$. It remains to address uniqueness of rougher solutions. Consider initial and boundary data
\[ (u_0, n_0, n_1,g,h) \in  H^{s_0}(\R^+) \times H^{s_1}(\R^+) \times {H}^{s_1 - 1}(\R^+) \times H^{\frac{2s_0 + 1}4}(\R^+) \times H^{s_1}(\R^+).\]
Suppose first that $s_0 \in (0,\frac12)$ and $s_1 = -\frac12+ $. In addition suppose $u_0^e$ and $\wt{u_0}^e$ are two $H^{s_0}(\R)$ extensions of $u_0$, and $(n_0^e, n_1^e)$ and $(\wt{n_0}^e,\wt{n_1}^e)= (n_0^{\text{odd}}, n_1^{\text{odd}})$ are $H^{s_1}(\R)\times H^{s_1-1}(\R)$ extensions of $(n_0,n_1)$. Note that for the wave data, we choose one extension to be specifically the odd extension, for reasons which are explained below.

Let $(u,n)$ and $(\wt{u}, \wt{n})$ be the corresponding solutions to the fixed point equation. Take a sequence ${u_{0,k}}$ in $H^{\frac12+}(\R^+)$ which converges to $u_0$ in $H^{s_0}(\R^+)$. Let $u_{0,k}^e$ and $\wt{u_{0,k}}^e$ be $H^{\frac12+}$ extensions of $u_{0,k}$ which converge to $u_0^e$ and $\wt{u_0}^e$ respectively in $H^{s_0-}(\R)$. Such extensions exist by Lemma \ref{approx_lemma} below.

For the wave data, which has a component in $H^{s_1-1}$, Lemma \ref{approx_lemma} is not available. We proceed slightly differently, employing the odd extension. Take a sequence $(n_{0,k}^e, n_{1,k}^e) \in L^2(\R)\times H^{-\frac12+}(\R)$ which converges to $(n_0^e, n_1^e)$ in $H^{s_1}(\R)\times H^{s_1-1}(\R)$.
Define a second sequence $(\wt{n_{0,k}}^e, \wt{n_{1,k}}^e) \in L^2(\R)\times H^{-\frac12+}(\R)$ by
\[ (\wt{n_{0,k}}^e, \wt{n_{1,k}}^e) = \Bigl( (\chi n_{0,k}^e)^\text{odd},(\chi n_{1,k}^e)^\text{odd}  \Bigr).\] 
Then $(n_{0,k}^e, n_{1,k}^e) = (\wt{n_{0,k}}^e, \wt{n_{1,k}}^e)$ on the positive half-line, and $(\wt{n_{0,k}}^e, \wt{n_{1,k}}^e)$ converges to $(\wt{n_0}^e, \wt{n_1}^e)$ in $H^{s_1}(\R) \times H^{s_1-1}(\R)$.

Using the local theory in $H^{\frac12+} \times L^2 \times H^{-1}$, we arrive at corresponding sequences of solutions $(u_k, n_k)$ and $(\wt{u_k}, \wt{n_k})$. Since their initial data is equal on the right half-line, the uniqueness result above implies that $(u_k, n_k)$ and $(\wt{u_k}, \wt{n_k})$ are equal on $\R^+$ on their common interval of existence. Furthermore, $(u_k, n_k)$ converges to $(u,n)$ and $(\wt{u_k}, \wt{n_k})$ converges to $(\wt{u}, \wt{n})$ by the local well--posedness theory we have established in Section \ref{lwp}. Thus, if we can show that the common interval of existence is nontrivial, we will have uniqueness. 

\emph{A priori}, the interval of existence is inversely proportional to the $H^{\frac12+} \times L^2 \times H^{-1}$ norms of the initial data, which are growing as $k$ increases. This means that the time of existence goes to zero as $k \to \infty$. However, using the smoothing, we can take the time of existence proportional to the data in the $H^{s_0} \times H^{-\frac12+} \times H^{-\frac32+}$ norm, which is bounded as desired. This works directly for $s_0 > 0$. Iterating this argument, we can obtain uniqueness for $s_0 \in (-\frac14, 0]$ as well. 

\begin{lemma}{\cite{ET1}}\label{approx_lemma}
 Fix $-\frac12 < s < \frac12$ and $k > s$. Let $p \in H^s(\R^+)$ and $q \in H^k(\R^+)$. Let $p^e$ be an $H^s$ extension of $p$ to $\R$. Then there is an $H^k$ extension $q^e$ of $q$ to $\R$ such that 
 \[ \| p^e - q^e \|_{H^r(\R)} \lesssim \| p-q\|_{H^s(\R^+)} \quad \text{ for } r < s. \] 
\end{lemma}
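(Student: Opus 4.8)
The plan is to reduce everything to a one-sided density statement. Since $k>s$ we have $H^k(\R^+)\subset H^s(\R^+)$, so $q\in H^s(\R^+)$ and $d:=p-q$ is a well-defined element of $H^s(\R^+)$ with $\|d\|_{H^s(\R^+)}=\|p-q\|_{H^s(\R^+)}$ --- this is the quantity we must estimate. First I would fix any $H^k(\R)$-extension $q_0^e$ of $q$ (one exists by the definition of $H^k(\R^+)$) and set $w:=p^e-q_0^e$; since $p^e\in H^s(\R)$ and $q_0^e\in H^k(\R)\subset H^s(\R)$ we have $w\in H^s(\R)$, and by construction $w|_{\R^+}=p-q=d$. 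The key observation is that every $H^k(\R)$-extension of $q$ is of the form $q^e=q_0^e+\psi$ with $\psi\in H^k(\R)$ supported in $(-\infty,0]$, and then $p^e-q^e=w-\psi$. Thus the lemma reduces to the following: produce $\psi\in H^k(\R)$ supported in $(-\infty,0]$ with $\|w-\psi\|_{H^r(\R)}\lesssim\|d\|_{H^s(\R^+)}$ for $r<s$.

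Next I would split $w=\chi w+(1-\chi)w$, with $\chi=\chi_{[0,\infty)}$. The first piece depends only on $w|_{\R^+}=d$, and since $-\frac12<s<\frac12$, Lemma \ref{char_func_lemma} gives $\|\chi w\|_{H^s(\R)}\lesssim\|d\|_{H^s(\R^+)}$; the remaining piece $\omega:=(1-\chi)w=w-\chi w$ then lies in $H^s(\R)$ and is supported in $(-\infty,0]$. Hence it is enough to choose $\psi\in H^k(\R)$ supported in $(-\infty,0]$ with $\|\omega-\psi\|_{H^r(\R)}\le\|d\|_{H^s(\R^+)}$: indeed $w-\psi=\chi w+(\omega-\psi)$, so using $r<s$ we get $\|w-\psi\|_{H^r(\R)}\le\|\chi w\|_{H^s(\R)}+\|\omega-\psi\|_{H^r(\R)}\lesssim\|d\|_{H^s(\R^+)}=\|p-q\|_{H^s(\R^+)}$, and $q^e:=q_0^e+\psi$ is then an $H^k(\R)$-extension of $q$ (it restricts to $q$ on $\R^+$ since $\psi$ is supported in $(-\infty,0]$) with the desired bound.

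The only non-formal ingredient is therefore a density fact: for $r<s$, the $H^k(\R)$-distributions supported in $(-\infty,0]$ are dense, \emph{in the $H^r(\R)$ topology}, in the $H^s(\R)$-distributions supported in $(-\infty,0]$. I would prove this by one-sided mollification --- convolving $\omega$ with an approximate identity $\phi_\delta$ whose support lies in $(-\delta,0)$ keeps the support inside $(-\infty,0]$, places $\omega*\phi_\delta$ in $H^N(\R)$ for every $N$ (since $\widehat{\phi_\delta}$ decays rapidly, so $\lb\xi\rb^{k-s}\,\widehat{\phi_\delta}(\xi)$ is bounded for each fixed $\delta$), and gives $\omega*\phi_\delta\to\omega$ in $H^s(\R)$, a fortiori in $H^r(\R)$, by dominated convergence on the Fourier side; taking $\delta$ small enough yields $\psi=\omega*\phi_\delta$. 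I expect this mollification argument, together with the boundedness of multiplication by $\chi$ on $H^s$ from Lemma \ref{char_func_lemma} --- which is the one place the hypothesis $-\frac12<s<\frac12$ is genuinely used --- to be the crux; the remaining steps are routine bookkeeping.
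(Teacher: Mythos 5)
The paper does not prove this lemma---it is cited verbatim from \cite{ET1}---so there is no in-house argument to compare against. Taken on its own merits, your proof is correct in substance and self-contained. The reduction to ``find $\psi\in H^k(\R)$ supported in $(-\infty,0]$ close to $\omega=(1-\chi)w$'' is clean, the appeal to Lemma~\ref{char_func_lemma} is exactly where the hypothesis $-\tfrac12<s<\tfrac12$ enters, and the one-sided mollification argument (nonnegative $\phi$ with $\operatorname{supp}\phi\subset(-1,0)$, $\int\phi=1$, so $|\hat\phi|\le1$, $\hat\phi(0)=1$, $\hat\phi$ Schwartz) does give $\omega\ast\phi_\delta\in H^N$ for every $N$, supported in $(-\infty,0)$, with $\omega\ast\phi_\delta\to\omega$ in $H^s$ by dominated convergence. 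In fact your argument yields convergence in the full $H^s$ norm rather than only $H^r$ for $r<s$, so you obtain a slightly stronger conclusion than is stated.

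One small caveat worth recording. Your step ``choose $\psi$ with $\|\omega-\psi\|_{H^r}\le\|d\|_{H^s(\R^+)}$'' implicitly assumes $\|p-q\|_{H^s(\R^+)}>0$: density gives you $\|\omega-\psi\|_{H^r}$ arbitrarily small but generally nonzero, because $\omega$ itself need not lie in $H^k$. In the degenerate case $p=q$ on $\R^+$ (so $d=0$) the conclusion as literally stated would force $q^e=p^e$, which may fail to lie in $H^k(\R)$. This is really a minor imprecision in the statement as cited---in the application in Section~\ref{uni} one works with a sequence $u_{0,k}\to u_0$ and only needs $\|p^e-q^e\|_{H^r}\lesssim\|p-q\|_{H^s(\R^+)}+\epsilon$ with $\epsilon$ arbitrary, which your density argument does deliver---but it would be worth flagging that your proof establishes the lemma for $p\ne q$ (equivalently, the $\epsilon$-perturbed form), which is all that is actually used.
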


\section{Global Existence}\label{globalsec}

To begin, we establish conservation of $\|u\|_{L^2}$ for the Klein-Gordon Schr\"odinger system \eqref{eq:KGS} with Schr\"odinger boundary data $g=0$. Multiply the Schr\"odinger evolution equation by $\overline{u}$ to obtain
\begin{align*}
 \partial_t \|u\|_{L^2_x(\R^+)}^2 = 2 \Re \int_0^\infty u_t \overline{u} \d x &= -  2 \Im\int_0^\infty u_{xx}\overline{u} \d x \\
 &=  -2 \Im \overline{u}(0,\cdot) u_x(0,\cdot) = -2 \Im \overline{g}(\cdot) u_x(0,\cdot). 
\end{align*}
Integrating this equality we arrive at 
\[ \|u(\cdot, t)\|_{L^2_x(\R^+)}^2 = \| u_0 \|_{L^2_x(\R^+)}^2 -2 \Im \int_0^t \overline{g}(t') u_x(0,t') \d t' \; \overset{g=0}{=} \; \| u_0 \|_{L^2_x(\R^+)}^2.\] 
Thus, for $g = 0$, we have conservation of $\|u\|_{L^2_x}$.

To carry out the global existence argument, we assume that the Schr\"odinger part has zero boundary data, and work with the system
\begin{equation} \label{eq:gKGS}
 \begin{cases}
  iu_t + \Delta u = (n+m)u, \quad x,t \in \R^+, \\
  n_{tt} + (1 - \Delta)n = |u|^2, \\
  u(x,0) = u_0 \in L^2 (\R^+) \\
  n(x,0) = n_0(x) \in H^{s_1}(\R^+), \quad n_t(x,0) = n_1(x) \in {H}^{s_1 - 1}(\R^+), \\
  u(0,t) = 0, \quad n(0,t) = 0. 
 \end{cases}
\end{equation}

Here $m$ is the solution to the linear Klein-Gordon initial value problem with zero initial data and boundary data $h(t) \in H^{s_1}(\R^+)$. We note that the $L^2$ conservation is still valid for the new system \eqref{eq:gKGS}.

Then, estimating as in the local theory argument and using the powers of $T$ available from \eqref{eq:xs3}, \eqref{eq:T_power_linear}, Lemmas~\ref{schr_ibvp_cont} and \ref{ibvp_cont} and Proposition~\ref{T_power_est},  we have 
\begin{align*}
 &\| \Gamma_1(u(t),n(t)) \|_{X^{0,b}} \lesssim T^{\frac12  -b} \| u_0 \|_{L^2} + T^{1 - 2b} \| n + m\|_{Y^{s_1, b}} \|u\|_{X^{0,b}}, \\
 &\| \Gamma_2 (u(t),n(t)) \|_{Y^{s_1,b}} \lesssim T^{\frac12 - b} (\|n_0\|_{H^{s_1}} + \|n_1\|_{{H}^{s_1-1}}) + T^{1-2b- \epsilon} \| u\|_{X^{0,b}}^2,
\end{align*}
and 
\begin{align*}
 &\| \Gamma_1(u(t), n(t)) - \Gamma_1(\tilde{u}(t), \tilde{n}(t))\|_{X^{0,b}} \\ 
 &\hspace{2in} \lesssim   T^{1-2b} \Bigl( \|n + m\|_{Y^{s_1,b}} \| u - \tilde{u}\|_{X^{0,b}} + \| n - \tilde{n} \|_{Y^{s_1,b}} \| \tilde{u} \|_{X^{0,b}} \Bigr), \\
 &\| \Gamma_2(u(t), n(t)) - \Gamma_2(\tilde{u}(t), \tilde{n}(t))\|_{Y^{s_1,b}} \\ 
 & \hspace{2in} \lesssim T^{1-2b-\epsilon} \| u -\tilde{u}\|_{X^{0,b}} \Bigl( \| u\|_{X^{0,b}} + \| \tilde{u}\|_{X^{0,b}} \Bigr). 
\end{align*}

Thus, on a ball in $X^{0,b} \times Y^{s_1,b}$ given by
\begin{equation} \label{eq:contractionSpace} \| u\|_{X^{0,b}} \lesssim T^{\frac12-b} \|u_0\|_{L^2} \qquad \| n\|_{Y^{s_1,b}} \lesssim T^{\frac12-b} (\|n_0\|_{H^{s_1}} + \|n_1\|_{H^{s_1-1}}), 
\end{equation}
we can obtain a contraction as long as 
\begin{align}\label{eq:contraction_cond}
\begin{split}
 &T^{\frac32 - 3b} \| u_0\|_{L^2} \lesssim 1,    \\      
 &T^{\frac32 - 3b - \epsilon} \| u_0\|_{L^2} \lesssim 1, \\
 &T^{\frac32 - 3b} (\|n_0\|_{H^{s_1}} + \|n_1\|_{{H}^{s_1-1}} + \|h\|_{H^{s_1}}) \lesssim 1,  \\
 &T^{\frac32 - 3b - \epsilon} \| u_0\|_{L^2}^2 \lesssim (\|n_0\|_{H^{s_1}} + \|n_1\|_{{H}^{s_1-1}}).
 \end{split}
\end{align}

We wish to iterate this process. The spatial $L^2$ norm of the Schr\"odinger part is conserved, so we need not concern ourselves with the growth of $\|u\|_{L^2}$. The boundary data $h$ is also fixed for all time, so we need only concern ourselves with the growth of the spatial $H^{s_1} \times H^{s_1-1}$ norm of $(n,n_t)$.

Suppose that after some time $t$, we have $\| n(t)\|_{H^s_1(\R)} + \|n_t\|_{H^{s_1 - 1}(\R)} \gg \Bigl \lb \|u_0\|_{L^2} + \|h\|_{H^{s_1}} \Bigr\rb^2$. Take this as the new initial time. To satisfy the second inequality in \eqref{eq:contraction_cond} with the optimal $b = \frac13$, we take
\[ T \approx (\|n_0\|_{H^{s_1}} + \|n_1\|_{{H}^{s_1-1}})^{-1/(\frac32 - 3b)} = (\|n_0\|_{H^{s_1}} + \|n_1\|_{{H}^{s_1-1}})^{-2}. \] 
Note that the other constraints in \eqref{eq:contraction_cond} are then automatically satisfied. 

As initial data for the next iteration, we take $(u(T), n^\text{odd}(T), n^\text{odd}_t(T))$. We need to bound the norms of $n^\text{odd}(T)$ and $n^\text{odd}_t(T)$. Recall that
\[  n = \eta_T(t) V_0^t\big(\phi_{\pm}, 0\big) + \frac12\eta(t) (n_+ +  n_-) -\frac12 \eta_T(t) V_0^t(0,z).\]
We assume that we obtained $(u,n)$ by taking odd extensions of the initial data. In this case, $r$, as defined by \eqref{eq:rDef}, is zero because the Klein-Gordon flow preserves oddness.  
 
It remains to control the remaining terms comprising $n$. If $s_1 = 0$, we could proceed directly using estimates similar to those established already.\footnote{Specifically, we could use an estimate similar to Proposition \ref{T_power_est} to control $V_0^t$ in $L^2$. However, a sufficiently strong estimate in $H^{s_1}$, for $s_1 < 0$, does not appear to hold.}
However, to take $s_1 <0$ an additional observation is needed. We note that the remaining term $\frac12\eta(t) (n_+ +  n_-) -\frac12 \eta_T(t) V_0^t(0,z)$ is exactly the solution to the Klein-Gordon on $\Bbb R^+$ with zero initial and boundary conditions and forcing $|u|^2$. On the right-half line, this is the same as the solution to the KGS system \eqref{eq:gKGS} with $(|u|^2)^{\text{odd}}$ forcing. We use this with the fact that the Klein-Gordon flow preserves oddness and the estimate $\| f^\text{odd}\|_{Y^{s,b}_\pm} \lesssim \|f\|_{Y^{s,b}_\pm}$ (which follows from Lemma \ref{char_func_lemma}). This will allow us to eliminate the troublesome $V_0^t$ term entirely. We have
\begin{multline*}
 \| n^\text{odd}(T) \|_{H^{s_1}_x} +  \| n_t^\text{odd}(T) \|_{H^{s_1 -1}_x} \leq
 \| n(0) \|_{H^{s_1}_x} +  \| n_t(0) \|_{H^{s_1 -1}_x} + \\
  + \frac12 \left\| (\wt{n_+} + \wt{n_-})(T) \right\|_{H^{s_1}_x} + \frac12 \left\|\partial_t(\wt{n_+} + \wt{n_-})(T) \right\|_{H^{s_1-1}_x}.
\end{multline*}
Here $\wt{n_\pm}$ denotes the Duhamel integral as defined in \eqref{eq:def_duhamel_compnts}, with the exception that $|u|^2$ is replaced by $(|u|^2)^\text{odd}$. By \cite[Lemma 2.3]{CHT}, the terms on second line above can be bounded by
\[ T^{\frac16 }\| (|u|^2)^\text{odd} \|_{Y^{s_1,-\frac13}} \lesssim T^{\frac16 }\| |u|^2 \|_{Y^{s_1,-\frac13}} \lesssim T^\frac12 \| u_0\|_{L^2}^2.\]
The last inequality above comes from the bound on the size of $u$; see \eqref{eq:contractionSpace}. 

From this point, the argument closes exactly as in \cite{CHT} -- we can iterate this process $m$ times before the norms double, where
\[m \approx \frac{\| n_0\|_{H^{s_1}} + \| n_1\|_{H^{s_1-1}}}{T^\frac12 \| u_0\|^2}.\] 
The time advanced after these iterations is 
\[ mT \approx \frac{T^{\frac12} \bigl(\| n_0\|_{H^{s_1}} + \| n_1\|_{H^{s_1-1}}\bigr)}{ \| u_0\|^2} \approx \frac1{\|u_0\|^2},\]
which is independent of the wave data. Thus the entire process can be iterated to cover intervals of arbitrary length. 

\section{Proofs of Estimates}\label{proofs}

\subsection{Proof of Lemma \ref{ivp_cont}: Kato Smoothing for the Klein-Gordon Flow} \label{ivp_cont_prf}

It suffices to consider evaluation at $x=0$ since Sobolev norms are translation invariant. We may write
\begin{align*}
 \eta(t) e^{\pm t D} g(0,t) &= \eta(t) \int e^{\pm i t \operatorname{sgn}(\xi)\lb \xi \rb} \hat{g}(\xi) \d \xi \\
 \mathcal{F}_t\Bigl(\eta(t) e^{\pm t D} g(0,t) \Bigr) &= \int \hat{\eta}(\tau \mp \operatorname{sgn}(\xi)\lb \xi \rb ) \hat{g}(\xi) \d \xi.
\end{align*}
Using this representation and the fact that $\lb \tau \rb^s \lesssim \lb \xi \rb^s \lb \tau \mp \operatorname{sgn}(\xi)\lb \xi \rb \rb^{|s|}$, we arrive at
\begin{align*}
\| \eta(t) e^{\pm t D} g(0,t)\|_{H^s_t} &= \left\| \lb \tau \rb^s \int \hat{\eta}(\tau \mp \operatorname{sgn}(\xi)\lb \xi \rb ) \hat{g}(\xi) \d \xi\right\|_{L^2_\tau} \\
&\lesssim \left\| \int  \lb \tau \mp \lb \xi \rb \rb^{|s|} \hat{\eta}(\tau \mp \operatorname{sgn}(\xi)\lb \xi \rb ) \lb \xi \rb^s \hat{g}(\xi) \d \xi\right\|_{L^2_\tau}.
\end{align*}
Since $\eta$ is a Schwarz function, Young's inequality implies that this is bounded by $\|g\|_{H^s}$.

\subsection{Proof of Lemma \ref{ibvp_xsb}: $Y^{s,b}$ Bound for the Klein-Gordon Solution on $\Bbb R^+$} \label{ibvp_xsb_prf}

Recall the formulas for $A$ and $B$ given by Lemma \ref{AB}. To bound $A$, let $f(y) = e^{-y} \rho(y)$. This is a Schwarz function.
The space-time Fourier transform of $\eta_T(t) A$ is 
\[ T \int_{-1}^1 \hat{\eta}(T(\tau - \mu)) \mathcal{F}_x\Bigl(f(x\sqrt{1-\mu^2})\Bigr)(\xi) \hat{h}(\mu) \d \mu = T \int_{-1}^1 \hat{\eta}(T(\tau - \mu)) \frac{\hat{f}(\xi/\sqrt{1-\mu^2})}{\sqrt{1-\mu^2}} \hat{h}(\mu) \d \mu.\] 
Since $f$ and $\eta$ are Schwarz functions and $|\mu| \leq 1$, we have the bounds 
\begin{align*} 
\hat{\eta}(T(\tau - \mu)) &\lesssim \lb T(\tau - \mu) \rb^{-|b| - 1/2- } \lesssim T^{-|b|} \lb \tau \rb^{-|b|} \lb T\tau \rb^{ - 1/2 - } \\
\hat{f}(\xi/\sqrt{1-\mu^2}) &\lesssim \lb \xi/\sqrt{1 - \mu^2} \rb^{-2 -|s| -|b|} \lesssim \left( \frac{1-\mu^2}{1-\mu^2 + \xi^2} \right) \lb \xi \rb^{-|s| - |b|}. 
\end{align*}
Thus
\begin{align*}
 \| \eta_T(t) A \|_{Y^{s,b}_\pm} &\lesssim T^{1-|b|} \left\| \lb \xi \rb^{-|b|}\lb \tau \pm |\xi| \rb^b \lb \tau \rb^{-|b|} \lb T\tau \rb^{ - 1/2 -} \int_{-1}^1 \frac{(1-\mu^2)^{1/2}}{1-\mu^2 + \xi^2} \hat{h}(\mu) \d \mu \right\|_{L^2_\xi L^2_\tau} \\
 &\lesssim T^{1-|b|} \left\|  \lb T \tau \rb^{-1/2-} \int_{-1}^1 \frac{(1-\mu^2)^{1/2}}{1-\mu^2 + \xi^2} \hat{h}(\mu) \d \mu \right\|_{L^2_\xi L^2_\tau}\\
 &\lesssim T^{1-|b|} \left\| \lb T \tau \rb^{-1/2-} \int_{-1}^1 (1-\mu^2)^{-1/4} \hat{h}(\mu) \d \mu \right\|_{ L^2_\tau} \\
 & \lesssim T^{1/2-|b|} \| \chi_{[-1,1]} \hat{h}\|_{L^2} \leq \| \chi h \|_{H^s(\R)}.
\end{align*}
To bound $B$, notice that $B =L^t \phi(x)$, where $L^t$ is the Fourier multiplier operator given by $e^{-it \mu  \sqrt{1 + 1/\mu^2}}$ and 
\[ \widehat{\phi}(\mu) =  \widehat{h}\bigl(-\mu \sqrt{1+ 1/\mu^2}\bigr) \frac{1}{\sqrt{1 + 1/\mu^2}}. \] 
If we establish the bound $\| \eta_T(t) L^t \phi\|_{Y^{s,b}} \lesssim T^{1/2-|b|}\|\phi\|_{H^s}$, and note that  
\[ \| \phi \|_{H^s(\R)}^2  = \int_{|z| \geq 1} \lb \sqrt{z^2 - 1} \rb^{2s} \,\widehat{h}^2(z) \d z \leq  \int_{|z| \geq 1}\lb z \rb^{2s} \,\widehat{h}^2(z) \d z \leq \| \chi h \|_{H^s(\R)}^2,\] 
we'll be done. 

To show that $\| \eta_T(t) L^t \phi\|_{Y^{s,b}} \lesssim T^{1/2 - |b|} \|\phi\|_{H^s}$, notice first that 
\[ \mathcal{F}_{x,t}\Bigl(\eta_T(t) L^t \phi\Bigr)(\xi,\tau)  = T \hat{\eta}(T(\tau - \operatorname{sgn}(\xi) \lb \xi \rb) ) \hat{\phi}(\xi).\] 
Write $\eta_T(t) L^t \phi = T(\I + \II)$, where
\[ \hat{\I}(\xi,\tau) = \rho(\xi) \hat{\eta}(T(\tau - \operatorname{sgn}(\xi) \lb \xi \rb)) \hat{\phi}(\xi) \qquad \hat{\II} = \bigl(1-\rho(\xi)\bigr)\hat{\eta}(T(\tau - \operatorname{sgn}(\xi) \lb \xi \rb)) \hat{\phi}(\xi). \] 
By definition $ \| \eta_T(t) L^t \phi\|_{Y^{s,b}} \leq \| \I \|_{Y^{s,b}_-} + \| \II\|_{Y^{s,b}_+}$. Since $\rho$ is supported on $[-1,\infty)$ and $\eta$ is Schwarz function, we have
\begin{align*}
 \|\I\|_{Y^{s,b}_-} &= T \| \lb \xi \rb^s \lb \tau - |\xi| \rb^b \rho(\xi) \hat{\eta}(T(\tau - \operatorname{sgn}(\xi) \lb \xi \rb)) \hat{\phi}(\xi) \|_{L^2_\xi L^2_\tau}\\
 &\lesssim T\| \lb \xi \rb^s \lb \tau - \xi \rb^b \rho(\xi) \lb T( \tau - \lb \xi \rb) \rb^{-1-|b|} \hat{\phi}(\xi) \|_{L^2_\xi L^2_\tau} \\
 &\lesssim T^{1-|b|} \| \lb \xi \rb^s \lb \tau - \xi \rb^b \rho(\xi) \lb T( \tau - \lb \xi \rb) \rb^{-1} \lb  \tau - \lb \xi \rb \rb^{-|b|} \hat{\phi}(\xi) \|_{L^2_\xi L^2_\tau} \\
 &\lesssim T^{1-|b|} \| \lb \xi \rb^s \lb T(\tau - \xi) \rb^{-1} \hat{\phi}(\xi) \|_{L^2_\xi L^2_\tau} \lesssim T^{1/2-|b|} \| \phi\|_{H^s}.
\end{align*}
Similarly, $\| \II\|_{Y^{s,b}_+} \lesssim T^{1/2-|b|} \| \phi\|_{H^s}$.

\subsection{Proof of Lemma \ref{ibvp_cont}: Continuity of Klein-Gordon Solution $\Bbb R^+$} \label{ibvp_cont_prf}

To show that $A \in C_t^0H_x^s$, write $f(y) = e^{-y} \rho(y)$ and notice that 
\begin{align*} 
\hat{A}(\xi, t) = \int_{-1}^1 e^{i \mu t} \frac{\hat{f}(\xi/ \sqrt{1 - \mu^2})}{\sqrt{1-\mu^2}} \hat{h}(\mu) \d \mu 
\end{align*}
Recall also that for $|\mu| \leq 1$, we have
\[ \hat{f}(\xi/ \sqrt{1 - \mu^2}) \lesssim \lb \xi \rb^{-|s|} \frac{1 - \mu^2}{1-\mu^2 + \xi^2}. \]
Therefore
\begin{align*}
 \| A\|_{H^s_x} &= \left\| \lb \xi \rb^s \int_{-1}^1 e^{i \mu t} \frac{\hat{f}(\xi/ \sqrt{1 - \mu^2})}{\sqrt{1-\mu^2}} \hat{h}(\mu) \d \mu \right\|_{L^2_\xi}\\
 &\lesssim \left\|  \int_{-1}^1 e^{i \mu t} \frac{\sqrt{1 - \mu^2}}{1-\mu^2+\xi^2} \hat{h}(\mu) \d \mu \right\|_{L^2_\xi}\\
 &\lesssim \int_{-1}^1  \frac{1}{(1-\mu^2)^{1/4}} |\hat{h}(\mu)| \d \mu \lesssim \| \chi_{[-1,1]} \hat{f}\|_{L^2}.
\end{align*}

For $B$, recall that $B =L^t \phi(x)$, where $L^t$ is the Fourier multiplier operator given by $e^{-it \mu  \sqrt{1 + 1/\mu^2}}$ and $\phi$ is given above in the proof of Lemma \ref{ibvp_xsb}. Then the fact that $B \in C_t^0H_x^s$ follows from the time continuity of $L^t$ and the $H^s$ bounds on $\phi$ derived previously.

To bound the solution in $H^s_t$, note that 
\begin{align*}
\| A \|_{H^s_t} &= \| \lb \mu \rb^s \chi_{[-1,1]}(\mu) f(x \sqrt{1-\mu^2}) \hat{h}(\mu) \|_{L^2_\mu} \lesssim \| \lb \mu \rb^s \chi_{[-1,1]}(\mu) \hat{h}(\mu) \|_{L^2_\mu}, \\
 \| B\|_{H^s_t} &= \| \lb \mu \rb^s \chi_{|\mu| \geq 1}(\mu) e^{-ix\mu\sqrt{1-1/\mu^2}} \hat{h}(\mu) \|_{L^2_\mu} = \| \lb \mu \rb^s \chi_{|\mu| \geq 1}(\mu)\hat{h}(\mu) \|_{L^2_\mu}.
\end{align*}

\subsection{Proof of Lemma \ref{wave_duhamel_est}: Kato Smoothing for the Klein-Gordon Duhamel Term} \label{wave_duhamel_est_prf}
We consider the `$+$' case; the `$-$' case can be treated in the same way.
Again, it suffices to prove the bound for $x=0$. We have
\begin{align*}
 \eta(t)\int_0^t e^{ (t- t^\prime)D} G  \d t^\prime \Big|_{x=0} 
 &=   \eta(t)  \iint \frac{e^{it \lambda }-e^{ it  \operatorname{sgn}(\xi) \lb\xi \rb }}{i(\lambda- \operatorname{sgn}(\xi) \lb\xi \rb )} \psi(\lambda-  \operatorname{sgn}(\xi) \lb\xi \rb ) \widehat G(\xi,\lambda) \d\xi \d\lambda \\ 
 &+\eta(t) \iint \frac{e^{it \lambda } }{i(\lambda- \operatorname{sgn}(\xi) \lb\xi \rb )} \psi^c(\lambda-  \operatorname{sgn}(\xi) \lb\xi \rb ) \widehat G(\xi,\lambda) \d\xi \d\lambda \\
&-\eta(t) \iint \frac{ e^{ it  \operatorname{sgn}(\xi) \lb\xi \rb }}{i(\lambda-  \operatorname{sgn}(\xi) \lb\xi \rb )} \psi^c(\lambda-  \operatorname{sgn}(\xi) \lb\xi \rb ) \widehat G(\xi,\lambda) \d\xi \d\lambda \\ &=:\I+\II-\III.
\end{align*}

The $\I$ term can be bounded by $\|G\|_{Y^{s,-b}_+}$ using a Taylor expansion argument just as in \cite[Proposition 3.8]{ET1}. To bound $\III$, we calculate 
\begin{align*}
 \| \III\|_{H^s} &= \left\|\lb \tau\rb^s \iint  \frac{\hat{\eta}(\tau - \operatorname{sgn}(\xi)\lb \xi \rb) }{i(\lambda-  \operatorname{sgn}(\xi) \lb\xi \rb )} \psi^c(\lambda-  \operatorname{sgn}(\xi) \lb\xi \rb ) \widehat{G}(\xi,\lambda) \d\xi \d\lambda \right\|_{L^2_\tau} \\
 &\lesssim \left\| \lb \tau \rb^s \iint  \frac{|\hat{\eta}(\tau - \sgn(\xi)\lb \xi \rb)| }{\lb \lambda-  \xi \rb} |\widehat{G}(\xi,\lambda)| \d\xi \d\lambda \right\|_{L^2_\tau}.
\end{align*}
Using the fact that $\lb \tau \rb^s \lesssim \lb \xi \rb^s \lb \tau - \xi \rb^{|s|}$ and $\hat{\eta}(\tau - \sgn(\xi) \lb \xi \rb) \lesssim \lb \tau - \xi \rb^{-|s|-2}$ and then the Cauchy-Schwarz inequality with $b < \frac12$, the above is bounded by
\begin{align*}
\left\|\iint  \frac{1}{\lb \lambda-  \xi \rb \lb \tau - \xi \rb^2}  \lb \xi \rb^s |\widehat{G}(\xi,\lambda)| \d\xi \d\lambda \right\|_{L^2_\tau} 
&\lesssim \left\|\int  \frac{1}{ \lb \tau - \xi \rb^2}  \left\|  \lb \xi \rb^s \lb \lambda-  \xi \rb ^{-b} \widehat{G}(\xi,\lambda)\right\|_{L^2_\lambda} \d\xi \right\|_{L^2_\tau}.
\end{align*}
By Young's inequality, this is bounded by $\left\|  \lb \xi \rb^s \lb \lambda-  \xi \rb ^{-b} \widehat{G}(\xi,\lambda)\right\|_{L^2_\lambda L^2_\tau} = \|G\|_{Y^{s,-b}_+}$. Next, we have 
\begin{equation*}
 \| \II\|_{H^s_t} \lesssim \left\| \lb \lambda \rb^s \int \frac{1}{\lb \lambda- \xi\rb} |\widehat G(\xi,\lambda)| \d\xi \right\|_{L^2_\lambda}.
\end{equation*}
For $s < 0$, this is bounded by 
\begin{equation*}
 \left\| \lb \lambda \rb^s \int_{|\xi| \gg |\lambda|} \frac{1}{\lb \lambda- \xi\rb} |\widehat G(\xi,\lambda)| \d\xi \right\|_{L^2_\lambda} +
\left\| \int \lb \xi \rb^s \frac{1}{\lb \lambda- \xi\rb} |\widehat G(\xi,\lambda)| \d\xi \right\|_{L^2_\lambda}.
\end{equation*}
The second norm in the previous line is bounded by $\| G\|_{Y^{s,-b}_+}$, so we're done with the $s< 0$ case. The remaining cases are the same as those treated in \cite[Proposition 3.8]{ET1} and are omitted.

\subsection{Proof of Lemma \ref{prop:smooth23}}\label{prop:smooth23_prf}

Using the convolution structure of the Fourier transform, write
\begin{equation*}
 \mathcal{F}(D^{-1}(u\overline{v})(\xi ,\lambda) = \lb\xi\rb^{-1} \iint \hat{u}(\xi +\xi_1, \lambda + \lambda_1) \overline{\hat{v}}(\xi_1, \lambda_1) \d \xi_1 \d \lambda_1.
\end{equation*}
Let $f(\xi, \lambda) = |\hat{u}(\xi,\lambda)|$ and $g(\xi, \lambda) =  |\hat{v}(\xi,\lambda)|$. Since on the domain of integration $ \lambda \mp \xi  \approx \xi$, it suffices to show that 
\begin{multline} \label{eq:eq1}  
\left\| \lb \lambda \rb ^{s_1 + a_1} \iiint_{|\xi| \gg |\lambda|} f(\xi + \xi_1, \lambda + \lambda_1) g(\xi_1, \lambda_1) \d \xi_1 \d \lambda_1 \d \xi \right\|_{L^2_\lambda} \\
\lesssim \|\lb \xi \rb^{s_0} \lb \lambda - \xi^2 \rb^b f \|_{L^2L^2}\|\lb \xi \rb^{s_0} \lb \lambda-\xi^2 \rb^b g\|_{L^2L^2}. 
\end{multline}

Note also that $\lb \lambda \rb^{s_1+a_1}/\lb \xi \rb^2 \ll \lb \lambda \rb^{s_1 + a_1 - 2}$. The left-hand side of \eqref{eq:eq1} is bounded by 
\begin{align*}
 \left\| \lb \lambda \rb^{s_1 + a_1 -2} \int_{|\xi| \gg |\lambda|} [f \ast_{\xi,\lambda} g(-\cdot, -,\cdot)](\xi, \lambda) \d \xi \right\|_{L^2_\lambda}.
\end{align*}
Using Young's inequality to bound the $L^1_\xi$ norm of the convolution (noting that the functions $f$ and $g$ are nonnegative) and then the Cauchy-Schwarz inequality, we arrive at the bounds
\begin{align*}
 \left\| \lb \lambda \rb^{s_1 + a_1 -2} \|f\|_{L^1_\xi}  \ast_{\lambda} \|g(-\cdot,-\cdot)\|_{L^1_\xi} \right\|_{L^2_\lambda} \lesssim \left\|  \|{f}\|_{L^1_\xi}  \ast_{\lambda} \|{g}(-\cdot,-\cdot)\|_{L^1_\xi} \right\|_{L^\infty_\lambda}.
\end{align*}
Using Young's inequality again, this is bounded by $\|{f}\|_{L^2_\lambda L^1_\xi}\|{g}\|_{L^2_\lambda L^1_\xi}$. Now 
\[ \|{f}\|_{L^2_\lambda L^1_\xi} = \left\| \int \frac{ \lb \xi \rb^{s_0} \lb \lambda - \xi^2 \rb^{b} |\hat{u}|(\xi, \lambda)}{\lb \xi \rb^{s_0} \lb \lambda - \xi^2 \rb^{b}} \d \xi \right\|_{L^2_\lambda} \lesssim \left( \sup_\lambda \int \lb \xi \rb^{-2s_0} \lb \lambda - \xi^2 \rb^{-2b} \d \xi \right)^{1/2} \|u\|_{X^{s_0,b}} .\]
For $s_0 > -\frac12$, change variables in the supremum by setting $\rho = \xi^2$ to see that the supremum is finite as long as $s_0 + 2b > \frac12$. The same procedure bounds $\|{g}\|_{L^2_\lambda L^1_\xi}$ in terms of $\|v\|_{X^{s_0,b}}$, so we're done.

\subsection{Proof of Lemma \ref{T_power_est}} \label{T_power_est_prf} 

Combining the first part of Lemma \ref{schr_ibvp_xsb}, Lemma \ref{char_func_lemma}, and Proposition \ref{schr_duhamel_est}, we have 
\begin{equation}\label{eq:unscaled_result} \| \eta(t) W^t_0(0,q) \|_{X^{0,b}} \lesssim \| F\|_{X^{0,-b}}, 
\end{equation}

where 
\[ q(t) = \Bigl[ \eta(t) \int_0^t e^{i(t-t') \Delta} F(t') \d t' \Bigr]_{x=0}. \] 

A calculation (for details, see \cite{ET}) shows that $q$ can be written in the form 
\[ q(t) = \eta(t) \iint \frac{e^{it\lambda} - e^{-i t \xi^2}}{i(\lambda + \xi^2)} \widehat{F}(\xi,\lambda) \d \xi \d \lambda.\]

The formula for $W^t_0(0,q)$ uses the Fourier transform of $\chi q$, which is 
\[ \widehat{\chi q}(\tau) = \iint \frac{\hat{\chi \eta}(\tau - \lambda) - \hat{\chi \eta}(\tau + \xi^2)}{i(\lambda + \xi^2)} \widehat{F}(\xi,\lambda) \d \xi \d \lambda.\]

The linear flow $W_0^t$ can be written in the form (\cite{bonaetal}, \cite{ET})
\begin{multline*} W^t_0(0,q) = \frac1\pi \int \chi(\beta) e^{-i \beta^2 t + i \beta x } \beta \widehat{\chi q}(-\beta^2) \d \beta + \frac1\pi \int \chi(\beta) e^{i \beta^2 t - \beta x} \rho( \beta x) \beta \hat{\chi q}(\beta^2) \d \beta\\ =: \frac1\pi(A + B). \end{multline*}

Then, using the fact that $A$ is an inverse spatial Fourier transform, we have
\begin{align*}
 \hat{\eta A}(\beta, \omega) = \chi(\beta) \beta \iint  \hat{\eta}(\omega + \beta^2)  \frac{\hat{\chi \eta}(-\beta^2 - \lambda) - \hat{\chi \eta}(-\beta^2 + \xi^2)}{i(\lambda + \xi^2)} \widehat{F}(\xi,\lambda) \d \xi \d \lambda.
\end{align*}

The estimate \eqref{eq:unscaled_result} implies that 
\[ \| \lb \omega + \beta^2 \rb ^b \hat{\eta A}(\beta, \omega) \|_{L^2_{\beta, \omega}} \lesssim \| F\|_{X^{0,-b}}.\]
[Technically, \eqref{eq:unscaled_result} gives a bound on $A + B$, but $A$ and $B$ are treated separately in the proof, so the bound holds for each.] 

Writing this out explicitly and setting $G(\xi, \lambda) = \lb \lambda + \chi^2 \rb ^{-b} \hat{F}(\xi,\lambda)$, we get
\begin{multline} \label{eq:expl_unscaled_result} \left\| \lb \omega + \beta^2 \rb ^b  \chi(\beta) \beta \iint  \hat{\eta}(\omega + \beta^2)  \frac{\hat{\chi \eta}(-\beta^2 - \lambda) - \hat{\chi \eta}(-\beta^2 + \xi^2)}{i(\lambda + \xi^2)\lb \lambda + \xi^2 \rb^{-b}}  G(\xi,\lambda) \d \xi \d \lambda \right\|_{L^2_{\beta, \omega}} \\ \lesssim \|G\|_{L^2_{\xi,\lambda}}. 
\end{multline}

Using the fact that $\hat{\eta_T}(\tau) = T \hat{\eta}(T \, \tau)$, the quantity which we need to bound can be written as 
\begin{multline*}  
T^2 \; \times \\
\left\| \lb \omega + \beta^2 \rb ^b  \chi(\beta) \beta \iint  \hspace{-6pt}\hat{\eta}(T(\omega + \beta^2))  \frac{\hat{\chi \eta}(T(-\beta^2 - \lambda)) - \hat{\chi \eta}(T(-\beta^2 + \xi^2))}{i(\lambda + \xi^2)\lb \lambda + \xi^2 \rb^{-b}}  G(\xi,\lambda) \d \xi \d \lambda \right\|_{L^2_{\beta, \omega}}.
\end{multline*}
Now rescale all four variables by letting
\[ \sqrt{T} \beta \mapsto \beta \qquad T \omega \mapsto \omega \qquad \sqrt{T} \xi \mapsto \xi \qquad T \lambda \mapsto \lambda. \]
After this rescaling, the quantity above becomes
\begin{multline*} T^{\frac14}  \; \times \\ \left\| \lb (\omega + \beta^2)/T \rb ^b  \chi(\beta) \beta \iint \hspace{-6pt} \hat{\eta}(\omega + \beta^2)  \frac{\hat{\chi \eta}(-\beta^2 - \lambda) - \hat{\chi \eta}(-\beta^2 + \xi^2)}{i(\lambda + \xi^2)\lb (\lambda + \xi^2)/T \rb^{-b}}  G\left(\frac{\xi}{\sqrt{T}},\frac{\lambda}{T}\right) \d \xi \d \lambda \right\|_{L^2_{\beta, \omega}} \hspace{-8pt}.
\end{multline*}

If $T \approx 1$, there is nothing to prove, so we may assume $T \ll 1$. Then $\lb x/T \rb \lesssim \lb x \rb/T$. Since $b$ is positive, the norm above can be bounded by 
\begin{multline*}
 T^{\frac14 - 2b}  \; \times \\ \left\| \lb \omega + \beta^2 \rb ^b  \chi(\beta) \beta \iint  \hat{\eta}(\omega + \beta^2)  \frac{\hat{\chi \eta}(-\beta^2 - \lambda) - \hat{\chi \eta}(-\beta^2 + \xi^2)}{i(\lambda + \xi^2)\lb \lambda + \xi^2 \rb^{-b}}  G\left(\frac{\xi}{\sqrt{T}},\frac{\lambda}{T}\right) \d \xi \d \lambda \right\|_{L^2_{\beta, \omega}}.
\end{multline*}
The result \eqref{eq:expl_unscaled_result} bounds this by
\[ T^{\frac14 - 2b} \| G(\xi/\sqrt{T},\lambda/T) \|_{L^2_{\xi,\lambda}} = T^{1-2b} \|G\|_{L^2_{\xi,\lambda}} = T^{1-2b} \| F\|_{X^{0,-b}},\]
as desired. 

The $B$ term can be treated similarly. We have 
\begin{multline*} \hat{\eta B}(\mu, \omega) = \\ \iiint \hat{\eta}(\omega - \beta^2) \mathcal{F}_x\left( e^{-\beta x} \rho(\beta x) \right)(\mu) \beta \chi(\beta) \frac{\hat{\chi \eta}(-\beta^2 - \lambda) - \hat{\chi \eta}(-\beta^2 + \xi^2)}{i(\lambda + \xi^2)\lb \lambda + \xi^2 \rb^{-b}}  G(\xi,\lambda) \d \beta \d \xi \d \lambda . 
\end{multline*}
Letting $f(y) = e^{-y} \rho(y)$, this is equal to
\begin{equation*} \iiint \hat{\eta}(\omega - \beta^2) \hat{f}(\mu/\beta) \chi(\beta) \frac{\hat{\chi \eta}(-\beta^2 - \lambda) - \hat{\chi \eta}(-\beta^2 + \xi^2)}{i(\lambda + \xi^2)\lb \lambda + \xi^2 \rb^{-b}}  G(\xi,\lambda) \d \beta \d \xi \d \lambda,
\end{equation*}
and we know from \eqref{eq:unscaled_result} that
\begin{multline*}
 \left\| \lb \omega + \mu^2 \rb^b \iiint \hat{\eta}(\omega - \beta^2) \hat{f}(\mu/\beta) \chi(\beta) \frac{\hat{\chi \eta}(-\beta^2 - \lambda) - \hat{\chi \eta}(-\beta^2 + \xi^2)}{i(\lambda + \xi^2)\lb \lambda + \xi^2 \rb^{-b}}  G(\xi,\lambda) \d \beta \d \xi \d \lambda \right\|_{L^2_{\mu,\omega}} \\
 \lesssim \|G\|_{L^2_{\xi,\lambda}}. 
\end{multline*}
The desired bound for $\eta_T B$ is obtained from this just as above by scaling all five variables.

\appendix
\section{Proof of Klein-Gordon Solution Formula on $\Bbb R^+$} \label{appendA}

Taking the Laplace transform in time of \eqref{eq:KGibvp} yields the equation
\[ \begin{cases}
    \lambda^2 \wt{n} - \Delta \wt{n} + \wt{n} = 0, \\
    \wt{n}(0,\lambda) = \wt{h}(\lambda).
   \end{cases} \]
The characteristic equation is $\lambda^2 - w^2 + 1 = 0$, which has roots $w = \pm \sqrt{\lambda^2 + 1}$. Since we are concerned with solutions that decay at infinity, we take the negative root. Thus we have 
\[ \wt{n}(x,\lambda) = e^{-x\sqrt{\lambda^2 + 1}}\, \wt{h}(\lambda).\]
Note that $\sqrt{\lambda^2 + 1}$ can be defined analytically on $\mathbb{C}\backslash [-i,i]$. By Mellin inversion, we have, for any $c > 0$, the equality 
\begin{equation*}
 n(x,t) = \frac1{2\pi i} \int_{c - i \infty} ^{c + i \infty} e^{\lambda t - x \sqrt{\lambda^2 + 1}} \, \wt{h}(\lambda) \d \lambda = \Re \frac1{\pi i} \int_{c +0 i } ^{c + i \infty} e^{\lambda t - x \sqrt{\lambda^2 + 1}} \, \wt{h}(\lambda) \d \lambda.
\end{equation*}
Taking $c \to 0^+$, we arrive at 
\begin{align*}
 n(x,t) &= \Re \frac1{\pi i} \int_{0} ^{i \infty} e^{\lambda t - x \sqrt{\lambda^2 + 1}} \, \wt{h}(\lambda) \d \lambda\\
 &= \Re \frac1{\pi} \int_{0} ^\infty e^{ i \mu t - x \sqrt{1 - \mu^2}} \, \widehat{h}(\mu) \d \mu.\\
\end{align*}
To ensure convergence when $x <0$, we include a smooth cut-off function $\rho$, as follows:
\begin{align*}
 n(x,t) &= \Re \frac1{\pi} \int_{0} ^1 e^{ i \mu t - x \sqrt{1 - \mu^2}} \rho\bigl(x\sqrt{1 - \mu^2}\bigr) \hat{h}(\mu) \d\mu + \Re \frac1{\pi} \int_{1} ^\infty e^{ i \mu t - x \sqrt{1 - \mu^2}} \hat{h}(\mu) \d \mu.
\end{align*}
Changing variables, this can be written as $\frac1{2\pi} (A + B)$, where
\begin{align*}
 A &=  \int_{-1} ^1 e^{ i \mu t - x \sqrt{1 - \mu^2}} \rho(x\sqrt{1 - \mu^2}) \hat{h}(\mu) \d\mu  \\
 B &= \int_{-\infty} ^\infty e^{-it \mu \sqrt{1 + 1/\mu^2} + i\mu x } \hat{h}(-\mu \sqrt{1+ 1/\mu^2}) \frac{1}{\sqrt{1 + 1/\mu^2}} \d \mu. 
\end{align*}
The rather cumbersome variables in $B$ are necessary to ensure we arrive at the correct branch of the square root function.


\begin{thebibliography}{100}

\bibitem{bhht} I.~Bejenaru, S.~Herr, J.~Holmer, and D.~Tataru, {\em On the 2D Zakharov system with $L^2$ Schr\"odinger data,} Nonlinearity {22} (2009), no. {5}, 1063--1089.

\bibitem{bonaetal} J. L. ~Bona, S. M. ~Sun, and B. Y. Zhang, {\it Nonhomogeneous boundary-value problems for one-dimensional nonlinear Schr\"odinger equations,} preprint, http://arxiv.org/abs/1503.00065.

\bibitem{bou} J. Bourgain, {\it Fourier transform restriction phenomena for certain lattice subsets and
applications to nonlinear evolution equations. Part I: Schr\"odinger equations,}  {GAFA} 3 (1993), 209--262.

\bibitem{bou1} J. Bourgain, {\it Fourier transform restriction phenomena for certain lattice subsets and
applications to nonlinear evolution equations. Part II: The KdV equation,}  {GAFA} 3 (1993), 209--262.

\bibitem{CHT}
J. Colliander, J. Holmer, and N. Tzirakis, \emph{Low regularity global well-posedness for the Zakharov and Klein-Gordon-Schr\"odinger systems}, Trans. Amer. Math. Soc. {360} (2008), no. 9, 4619--4638. 

\bibitem{ck} J. E. ~Colliander and C. E. ~Kenig, {\it The generalized Korteweg-de Vries equation on the half
line,} Comm. Partial Diff. Equations 27 (2002) 2187--2266.

\bibitem{ct} E. Compaan and N. Tzirakis {\it  Well-posedness and nonlinear smoothing for the "good" Boussinesq equation on the half-line,} J. Differential Equations 262 (2017) 5824--5859.

\bibitem{ET}  M. B. Erdo\u{g}an, N. Tzirakis, {\it Regularity properties of the cubic nonlinear Schr\"{o}dinger equation on the half line},  J. Funct. Anal.  271  (2016),  no. 9, 2539--2568.

\bibitem{ET1}  M. B. Erdo\u{g}an, N. Tzirakis, {\it Regularity properties of the Zakharov system on the half line}, Comm. Partial Differential Equations 42 (2017), no. 7, 1121--1149.
 
\bibitem{GTV} J.~Ginibre, Y.~Tsutsumi, and G.~Velo, {\em On the Cauchy problem for the Zakharov system},
J. Funct. Anal., {151} (1997), 384--436. 
 
\bibitem{Pecher2}
H. Pecher, \emph{ Low regularity well-posedness for the 3D Klein-Gordon-Schr\"odinger system}, Commun. Pure Appl. Anal. 11 (2012), no. 3, 1081--1096. 
 
 
\bibitem{Pecher} 
H. Pecher, \emph{Some new well-posedness results for the Klein-Gordon-Schr\"odinger system}, Differential Integral Equations 25 (2012), no. 1-2, 117--142. 
 
\end{thebibliography}
\end{document}